\newtheorem{thm}{Theorem}[section]
\newtheorem{cor}[thm]{Corollary}
\newtheorem{lem}[thm]{Lemma}
\theoremstyle{definition}
\newtheorem{defn}{Definition}[section]
\theoremstyle{remark}
\newtheorem{rem}{Remark}[section]
\makeatletter \@addtoreset{equation}{section}
\title{Relationship Between the $2n$-points Binary and $(3n-1)$-points Quaternary Approximating Subdivision Schemes}
\title{Relationship Between the $2n$-points Binary and $(3n-1)$-points Quaternary Approximating Subdivision Schemes}
\author{
\ Rabia Hameed\thanks{Corresponding Author E-mail: rabia.hameed@gscwu.edu.pk}
\ Sidra Nosheen\thanks{E-mail: ssbajwa786@gmail.com} \\
* \dag \small{Department of Mathematics, The Government Sadiq College Women University Bahawalpur}\\
}
\date{}
\begin{document}
\maketitle
\begin{abstract}
Geometric objects are primarily represented using curves and surfaces and the subdivision schemes are the basic tools for these representations. This study is based on a new thought that  there is a special relation between the binary and some kinds of the quaternary subdivision schemes. Due to the defined relation the quaternary subdivision schemes can also be formulated by the binary subdivision schemes. This study presents the generalized formula in compact form that contains the subdivision rules of $(3n-1)$-point quaternary approximating subdivision scheme which are based on the predefined $2n$-points binary approximating subdivision scheme. Firstly, we derive a relation between the quaternary approximating subdivision scheme and the even-point binary approximating subdivision scheme. By using this relation, we next derive two types of generalized quaternary approximating subdivision scheme that are based on the even and odd values of $n$. Then we apply these generalized formulas on the known binary schemes for specific values of $n$. This gives us the corresponding quaternary approximating subdivision schemes. We also analyze some of the well-known features of binary and its corresponding quaternary approximating subdivision schemes. These results are equally applicable on parametric and non-parametric subdivision schemes.
\end{abstract}
\textbf{Keywords:-} binary subdivision scheme, quaternary subdivision scheme, H\"{o}lder's \\ regularity, degree of precision, mask.
\section{Introduction}\label{Introduction}
Subdivision methods for curves were introduced and mathematically analyzed for the first time by de Rham \cite{de} in 1956 and re-invent for computer graphics community by Chaikin \cite{Chaikin} in 1974. Subdivision is actually an iterative method to generate smooth curves and surfaces. Subdivision schemes increase the points at each iteration to get smooth shapes. If the subdivision process increses points two times at each iteration then this process is known as the binary subdivision process, whereas if a subdivision processe increases points four times at each iteration then this process is known as quaternary subdivision process. The tools which are used for the binary and quaternary subdivision processes are known as the binary and ternary subdivision schemes respectively.

Mathematically, the general compact forms of univariate $r$-ary subdivision scheme which is used to get a refined polygon $G^{k+1}=\{g_{i}^{k+1}\}_{i \in \mathbb{Z}} \in n(\mathbb{Z})$ from the polygon $g^{k}=\{g_{i}^{k}\}_{i \in \mathbb{Z}} \in n(\mathbb{Z})$ can be defined in terms of a mask consisting of a finite set of non-zero coefficients $\beta=\{\beta_{j}\}_{j \in \mathbb{Z}}$ as follows:
\begin{eqnarray}\label{GSS}
	g_{r\phi+\eta}^{k+1} &=& \sum\limits_{j \in \mathbb{Z}}\beta_{rj+\eta}\,\ g^{k}_{\phi+j},
\end{eqnarray}
where the set of values $\{r=2, \,\ \eta=-1,0\}$ are for the binary subdivision rules and the set of values $\{r=4, \,\ \eta=-2,-1,0,1\}$  are used for the quaternary subdivision rules respectively,\,\ $n(\mathbb{Z})$ denote the space of scaler-valued sequences. The sequence $\beta=\{\beta_{j}\}_{j \in \mathbb{Z}}$ is called the refinement mask. The polynomial which uses this mask as coefficients is called the Laurent's polynomial. Therefore, the Laurent polynomial corresponding to subdivision scheme (\ref{GSS}) is
\begin{equation*}\label{}
	\mu(c)=\sum_{j \in \mathbb{Z}}{\beta_{rj+\eta}\,\ c^{rj+\eta}}.
\end{equation*}
A convergent subdivision scheme with the corresponding mask $\beta=\{\beta_{j}\}_{j \in \mathbb{Z}}$ necessarily satisfies the following convergence condition:
\begin{equation*}\label{}
	\sum_{j \in \mathbb{Z}}{\beta_{rj+\eta}}=1.
\end{equation*}

Binary subdivision schemes were firstly introduced by Dyn et al. \cite{Dyn1}. They also analyzed the convergence of sequences of control polygons produced by a binary subdivision scheme. After that, numerous research papers have been published for construction \cite{Ghaffar, Hameed, Mustafa1, Hameed10,  Siddiqi2} and analysis \cite{Ashraf, Dyn, Donat, Hameed, Hao, Mustafa, Shahzad1} of binary subdivision schemes. Quaternary subdivision schemes first and foremost presented by Mustafa and Khan \cite{Mustafa}. They constructed and analyzed the $4$-point approximating subdivision scheme with one shape parameter by using the Laurent's polynomial of the scheme. The construction and analysis of quaternary subdivision schemes are then published by \cite{Bari1, Hashmi, Mustafa2, Nawaz, Siddiqi2, Siddiqi3, Shahzad}.
 
  In this paper, we derive a connection between binary and quaternary subdivision schemes which will be used to convert every even-point binary subdivision scheme into a quaternary subdivision scheme. The conversion between binary and quaternary schemes has not yet been performed on our best knowledge. It is our opinion that this type of conversion can be useful in the design of smooth curves for different purposes, like curve and surface alignment. In comparison to binary schemes, quaternary schemes require fewer iterations to achieve the desired level of smoothness, so the conversion from binary to quaternary is important.

More precisely, the paper is organised as follows: In Section 2, we give the generalized procedure to define the relation between the binary and the quaternary subdivision schemes. In Section 3, we give the applications of the given technique along with the graphical comparisons of the pair of the binary and quaternary subdivision schemes. Section 4 is about the H\"{o}lder's regularity computation of the binary and the quaternary subdivision schemes. In Section 5, we give the response of these pairs of subdivision scheme on the polynomial data. Conclusion about the technique is given in Section 6.
\section{Link between the binary and quaternary subdivision schemes}
This section explains a new observation about the relation between the even-point binary approximating subdivision schemes and the quaternary approximating subdivision schemes. The generalized subdivision rules of $(3n-1)$-points quaternary subdivision schemes is deduced by using the subdivision rules of the $2n$-point binary subdivision schemes. The $2n$-point dual binary subdivision scheme which maps the polygon $g$ to a refined polygon $G$ after one level of refinement can be written as:
\begin{equation}\label{const1}
\left\{\begin{array}{c}
{g}_{2\varphi-1}^{k+1}=\sum\limits_{\lambda=-n+1}^{n}\beta_{2\lambda}\,\ g_{\varphi+\lambda-1}^{k},\\ \\
{g}_{2\varphi}^{k+1}=\sum\limits_{\lambda=-n+1}^{n}\beta_{2-2\lambda} \,\ g_{\varphi+\lambda}^{k},
\end{array}\right.
\end{equation}
where $\{g_{\varphi+\lambda}^{k}:\lambda=-n+1,\ldots,n: \varphi \in \mathbb{R}\}$ are the control points at $k$-th subdivision level and $\{\beta_{\lambda}:\lambda=-2n+2,\ldots,2n\}$ is the mask of the subdivision scheme.

The following lemma gives a new form of the subdivision rules of $2n$-point binary subdivision scheme which is defined in (\ref{const1}).
\begin{lem}\label{lem1}
	If we change $\varphi$ by the odd numbers $2\varphi-1$ and $2\varphi+1$ and the even number $2\varphi$ in the subdivision equations of the binary subdivision scheme (\ref{const1}), then these subdivision equations reduced into the four subdivision equations.
\end{lem}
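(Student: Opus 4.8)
The plan is to treat the statement as a direct reparametrization of the index $\varphi$ in each of the two rules of (\ref{const1}), grouping the resulting refined points according to their residue modulo $4$. Because the binary scheme of (\ref{const1}) doubles the index, its first rule produces the odd-indexed points $g^{k+1}_{2\varphi-1}$ and its second rule the even-indexed points $g^{k+1}_{2\varphi}$; as $\varphi$ ranges over $\mathbb{Z}$ these two families already exhaust the refined polygon. To pass from this two-fold to a four-fold (quaternary) description, I would split each family once more according to the parity of $\varphi$, which is precisely what the substitutions $\varphi\mapsto 2\varphi,\,2\varphi+1,\,2\varphi-1$ achieve: an even argument $2\varphi$ and an odd argument $2\varphi\pm1$ send each doubled index into two distinct residue classes mod $4$.

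Concretely, I would first substitute $\varphi\mapsto2\varphi$ and $\varphi\mapsto2\varphi+1$ into the first equation of (\ref{const1}). The left-hand indices become $2(2\varphi)-1=4\varphi-1$ and $2(2\varphi+1)-1=4\varphi+1$, while the right-hand shift $\varphi+\lambda-1$ turns into $2\varphi+\lambda-1$ and $2\varphi+\lambda$ respectively; the summation range $\lambda=-n+1,\dots,n$ and the mask entries $\beta_{2\lambda}$ are untouched, since the substitution acts on $\varphi$ alone. This produces the two odd-index quaternary rules. Next I would substitute $\varphi\mapsto2\varphi$ and $\varphi\mapsto2\varphi-1$ into the second equation, giving left-hand indices $4\varphi$ and $4\varphi-2$, right-hand shifts $2\varphi+\lambda$ and $2\varphi+\lambda-1$, and carrying the weights $\beta_{2-2\lambda}$ across unchanged. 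Each of these four steps is a single line of index algebra rather than a genuine computation.

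Collecting the four outcomes yields rules for $g^{k+1}_{4\varphi-2},\,g^{k+1}_{4\varphi-1},\,g^{k+1}_{4\varphi},\,g^{k+1}_{4\varphi+1}$, that is, exactly the four residue classes $\eta\in\{-2,-1,0,1\}$ of the general quaternary template $g^{k+1}_{4\varphi+\eta}$ appearing in (\ref{GSS}). The only delicate point, and the nearest thing to an obstacle, is the bookkeeping: I must confirm that the correspondence between the pair (first or second equation, parity of $\varphi$) and the residue of the refined index is a bijection, so that the four new equations reproduce (\ref{const1}) faithfully, with no refined point omitted or counted twice. Verifying $4\varphi-2\equiv2$, $4\varphi-1\equiv3$, $4\varphi\equiv0$ and $4\varphi+1\equiv1\pmod 4$ settles this, and since the mask coefficients transfer verbatim, the two binary rules are thereby reduced to the claimed four subdivision equations.
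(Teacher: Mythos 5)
Your proposal is correct and follows essentially the same route as the paper: the paper's proof likewise substitutes $\varphi\mapsto 2\varphi-1$ and $\varphi\mapsto 2\varphi$ into the even rule and $\varphi\mapsto 2\varphi$ and $\varphi\mapsto 2\varphi+1$ into the odd rule of (\ref{const1}), arriving at the same four equations for $g_{4\varphi-2}$, $g_{4\varphi-1}$, $g_{4\varphi}$, $g_{4\varphi+1}$ collected in (\ref{const6}). Your added check that the four residue classes mod $4$ are hit bijectively is a small tidy supplement the paper leaves implicit, but it does not change the argument.
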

\begin{proof}
Firstly, we re-write the subdivision scheme (\ref{const1}) in the form which is free from subdivision levels, hence we get two
subdivision equations given below:
\begin{equation}\label{aa2}
\left\{\begin{array}{c}
{g}_{2\varphi-1}=\sum\limits_{\lambda=-n+1}^{n}\beta_{2\lambda}\,\ g_{\varphi+\lambda-1},\\ \\
{g}_{2\varphi}=\sum\limits_{\lambda=-n+1}^{n}\beta_{2-2\lambda} \,\ g_{\varphi+\lambda}.
\end{array}\right.
\end{equation}
Since $\varphi \in \mathbb{R}$, hence we replace  $\varphi$  by $2\varphi-1 $ in the second subdivision equation of (\ref{aa2}). This gives
\begin{equation}\label{new-1}
 g_{4\varphi-2}=\sum\limits_{\lambda=-n+1}^{n}\beta_{2-2\lambda} \,\ g_{2\varphi+\lambda-1}.
\end{equation}
We again replace  $\varphi$ by $2\varphi$ in the first and second subdivision equations of (\ref{aa2}) respectively. Hence we get
\begin{equation}\label{new-2}
g_{4\varphi-1}=\sum\limits_{\lambda=-n+1}^{n}\beta_{2\lambda} \,\ g_{2\varphi+\lambda-1}
\end{equation}
and
\begin{equation}\label{new-3}
 g_{4\varphi}=\sum\limits_{\lambda=-n+1}^{n}\beta_{2-2\lambda} \,\ g_{2\varphi+\lambda}.
\end{equation}
Now we replace $\varphi$ by $2\varphi+1$ in the first subdivision eqaution of (\ref{aa2}). Which gives
\begin{equation}\label{new-4}
g_{4\varphi+1}=\sum\limits_{\lambda=-n+1}^{n}\beta_{2\lambda} \,\ g_{2\varphi+\lambda}.
\end{equation}
Combining (\ref{new-1}), (\ref{new-2}), (\ref{new-3}) and (\ref{new-4}), we get the four following equations
\begin{equation}\label{const6}
              \left\{\begin{array}{c}
 g_{4\varphi-2}=\sum\limits_{\lambda=-n+1}^{n}\beta_{2-2\lambda} \,\ g_{2\varphi+\lambda-1}, \,\ \,\ \,\
 g_{4\varphi-1}=\sum\limits_{\lambda=-n+1}^{n}\beta_{2\lambda} \,\ g_{2\varphi+\lambda-1},\\\\
 g_{4\varphi}=\sum\limits_{\lambda=-n+1}^{n}\beta_{2-2\lambda} \,\ g_{2\varphi+\lambda},\,\ \,\ \,\
 g_{4\varphi+1}=\sum\limits_{\lambda=-n+1}^{n}\beta_{2\lambda} \,\ g_{2\varphi+\lambda}.
                 \end{array}\right.
\end{equation}
Which completes the proof.
\end{proof}
Now we split the further process into two parts depending on the even and odd values of $n$. The first theorem is proved for the even values of $n$, while the second one is proved for the odd $n$.
\begin{thm}\label{thm-even-1}
	If $n$ is even, that is $n=2m: m\in \mathbb{N}$, then the subdivision rules $g_{4\varphi-2}$ and $g_{4\varphi-1}$ in (\ref{const6}) are the linear combination of $6m-1$ control points $g_{\varphi-3m+1}\ldots g_{\varphi+3m-1}$, while the subdivision rules $g_{4\varphi}$ and $g_{4\varphi+1}$ in (\ref{const6}) are the linear combination of $6m$ control points $g_{\varphi-3m+1}\ldots g_{\varphi+3m}$. 
\end{thm}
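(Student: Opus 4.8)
The plan is to read the four identities in (\ref{const6}) not as finished quaternary rules but as expressions for the finest points $g_{4\varphi-2},\dots,g_{4\varphi+1}$ in terms of the once-refined points $g_{2\varphi+\lambda-1}$ and $g_{2\varphi+\lambda}$, and then to substitute each of those once-refined points back using the level-free binary rules (\ref{aa2}). In other words, I would realise the target quaternary scheme as the two-fold application of the binary scheme to itself, so that after the second substitution every right-hand side becomes a linear combination of the coarse points $g_{\varphi+s}$. With this reading the entire statement reduces to bookkeeping the range of the offset $s$.

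Concretely, I would fix one rule, say $g_{4\varphi-2}=\sum_{\lambda=-n+1}^{n}\beta_{2-2\lambda}\,g_{2\varphi+\lambda-1}$, and split the sum according to the parity of $\lambda$, since it is this parity that decides which equation of (\ref{aa2}) applies to the index $2\varphi+\lambda-1$. When $\lambda$ is odd I would write $2\varphi+\lambda-1=2\bigl(\varphi+\tfrac{\lambda-1}{2}\bigr)$ and expand by the even rule $g_{2\psi}=\sum_{\mu}\beta_{2-2\mu}\,g_{\psi+\mu}$; when $\lambda$ is even I would write $2\varphi+\lambda-1=2\bigl(\varphi+\tfrac{\lambda}{2}\bigr)-1$ and expand by the odd rule $g_{2\psi-1}=\sum_{\mu}\beta_{2\mu}\,g_{\psi+\mu-1}$. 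This produces a double sum in $\lambda$ and $\mu$ whose summand is a multiple of $g_{\varphi+s}$ with $s=\tfrac{\lambda-1}{2}+\mu$ in the odd case and $s=\tfrac{\lambda}{2}+\mu-1$ in the even case. The same split applied to $g_{4\varphi}$ and $g_{4\varphi+1}$, whose inner points are $g_{2\varphi+\lambda}$, yields $s=\tfrac{\lambda}{2}+\mu$ for even $\lambda$ and $s=\tfrac{\lambda+1}{2}+\mu-1$ for odd $\lambda$.

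Next I would set $n=2m$ and compute the extreme values of $s$ in each parity branch over $\lambda\in\{-2m+1,\dots,2m\}$ and $\mu\in\{-2m+1,\dots,2m\}$. For the pair $g_{4\varphi-2},g_{4\varphi-1}$ both branches give $s$ running from $-3m+1$ to $3m-1$, and for the pair $g_{4\varphi},g_{4\varphi+1}$ the odd branch supplies the lower endpoint $-3m+1$ and the even branch the upper endpoint $3m$; since the endpoints depend only on whether the inner index is $g_{2\varphi+\lambda-1}$ or $g_{2\varphi+\lambda}$, the two rules within each pair automatically share a common support. Because $\mu$ runs through a block of consecutive integers while the shifts $\tfrac{\lambda-1}{2},\tfrac{\lambda}{2},\tfrac{\lambda+1}{2}$ run through consecutive integers as $\lambda$ sweeps its even and odd sub-lattices, the union of offsets is the full contiguous range, so the supports are exactly $g_{\varphi-3m+1},\dots,g_{\varphi+3m-1}$ (that is $6m-1$ points) and $g_{\varphi-3m+1},\dots,g_{\varphi+3m}$ (that is $6m$ points), as claimed.

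The main obstacle I anticipate is not any single computation but the careful merging of the two parity branches: one must verify that the even-$\lambda$ and odd-$\lambda$ offset sets together leave no gaps and attain precisely the stated endpoints, and that the evenness of $n$ is exactly what makes $\tfrac{\lambda}{2}$ and $\tfrac{\lambda\pm1}{2}$ integral on the relevant sub-lattices, so that all composite coarse indices are integers. Confirming contiguity and the sharpness of the two boundary offsets is the delicate step; the substitution itself and the per-branch minima and maxima are routine once the parity split has been organised correctly.
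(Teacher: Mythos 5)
Your proposal is correct and is essentially the paper's own argument: the paper also composes the binary scheme with itself, and its replacements of $\varphi$ by $\varphi-m,\varphi-m+1,\ldots,\varphi+m$ in the first or second rule of (\ref{const9}) are exactly your parity split of the once-refined index $2\varphi+\lambda-1$ (resp.\ $2\varphi+\lambda$) into even and odd cases. Your offset bookkeeping $s=\tfrac{\lambda-1}{2}+\mu$, $s=\tfrac{\lambda}{2}+\mu-1$, etc., with extremes $-3m+1$ and $3m-1$ (resp.\ $3m$), reproduces in compact form the supports the paper obtains by full expansion and records in (\ref{const31}).
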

\begin{proof}
Since $n$ is even, so firstly we put $n=2m$ in (\ref{const6}). Thus we get
\begin{equation}\label{const7}
\left\{\begin{array}{c}
g_{4\varphi-2}=\sum\limits_{\lambda=-2m+1}^{2m}\beta_{2-2\lambda} \,\ g_{2\varphi+\lambda-1},\,\ \,\ \,\
g_{4\varphi-1}=\sum\limits_{\lambda=-2m+1}^{2m}\beta_{2\lambda} \,\ g_{2\varphi+\lambda-1},\\\\
g_{4\varphi}=\sum\limits_{\lambda=-2m+1}^{2m}\beta_{2-2\lambda} \,\ g_{2\varphi+\lambda},\,\ \,\ \,\
g_{4\varphi+1}=\sum\limits_{\lambda=-2m+1}^{2m}\beta_{2\lambda} \,\ g_{2\varphi+\lambda}.
\end{array}\right.
\end{equation}
Now we have to find out values of $g_{2\varphi-2m}$, $g_{2\varphi-2m+1}$, $\ldots$, $g_{2\varphi+2m-1}$, $g_{2\varphi+2m}$. For this, first we evaluate (\ref{aa2}) for $n=2m$ and then by increasing or decreasing the subscript, we get the required unknowns.
\begin{eqnarray}\label{const9}
\left\{\begin{array}{cccc}
		g_{2\varphi-1}&=& \beta_{2-4m} \,\ g_{\varphi-2m}+\beta_{4-4m} \,\ g_{\varphi-2m+1}+\beta_{6-4m} \,\ g_{\varphi-2m+2}+\beta_{8-4m} \,\ g_{\varphi-2m+3}\\&& +\beta_{10-4m} \,\ g_{\varphi-2m+4}+\beta_{12-4m} \,\ g_{\varphi-2m+5}+\ldots+\beta_{4m-8} \,\ g_{\varphi+2m-5}+\\&& \beta_{4m-6} \,\ g_{\varphi+2m-4}+\beta_{4m-4} \,\ g_{\varphi+2m-3} +\beta_{4m-2} \,\ g_{\varphi+2m-2}+\beta_{4m} \,\ g_{\varphi+2m-1},\\
		g_{2\varphi}&=&\beta_{4m} \,\ g_{\varphi-2m+1}+\beta_{4m-2} \,\ g_{\varphi-2m+2}+\beta_{4m-4} \,\ g_{\varphi-2m+3}+\beta_{4m-6} \,\ g_{\varphi-2m+4}\\&& +\beta_{4m-8} \,\ g_{\varphi-2m+5}+\beta_{4m-10} \,\ g_{\varphi-2m+6}+\ldots+\beta_{10-4m} \,\ g_{\varphi+2m-4}+ \\&&\beta_{8-4m} \,\ g_{\varphi+2m-3}+\beta_{6-4m} \,\ g_{\varphi+2m-2}+\beta_{4-4m} \,\ g_{\varphi+2m-1}+\beta_{2-4m} \,\ g_{\varphi+2m}.
	\end{array}\right.
\end{eqnarray}
Now we replace $\varphi$ by $\varphi-m$ in second rule of (\ref{const9}), we get
\begin{eqnarray*}\label{const11}
g_{2\varphi-2m}&=&\beta_{4m} \,\ g_{\varphi-3m+1}+\beta_{4m-2} \,\ g_{\varphi-3m+2}+\beta_{4m-4} \,\ g_{\varphi-3m+3}+\beta_{4m-6} \,\ g_{\varphi-3m+4}\\&& +\beta_{4m-8} \,\ g_{\varphi-3m+5}+\beta_{4m-10} \,\ g_{\varphi-3m+6}+\ldots+\beta_{10-4m} \,\ g_{\varphi+m-4}+\\&& \beta_{8-4m} \,\ g_{\varphi+m-3}+\beta_{6-4m} \,\ g_{\varphi+m-2} +\beta_{4-4m} \,\ g_{\varphi+m-1}+\beta_{2-4m} \,\ g_{\varphi+m}.
\end{eqnarray*}
Now we replace $\varphi$ by $\varphi-m+1$ in the first and second rules of (\ref{const9}), we get
\begin{eqnarray*}\label{const11}
g_{2\varphi-2m+1}&=&\beta_{2-4m} \,\ g_{\varphi-3m+1}+\beta_{4-4m} \,\ g_{\varphi-3m+2}+\beta_{6-4m} \,\ g_{\varphi-3m+3}+\beta_{8-4m} \,\ g_{\varphi-3m+4}\\&&+\beta_{10-4m} \,\ g_{\varphi-3m+5}+\beta_{12-4m} \,\ g_{\varphi-3m+6}+\ldots+\beta_{4m-8} \,\ g_{\varphi+m-4}+\beta_{4m-6}\\&& \times g_{\varphi+m-3}+\beta_{4m-4} \,\ g_{\varphi+m-2}+\beta_{4m-2} \,\ g_{\varphi+m-1}+\beta_{4m} \,\ g_{\varphi+m},\\
g_{2\varphi-2m+2}&=& \beta_{4m} \,\ g_{\varphi-3m+2}+\beta_{4m-2} \,\ g_{\varphi-3m+3}+\beta_{4m-4} \,\ g_{\varphi-3m+4}+\beta_{4m-6}\,\ g_{\varphi-3m+5}\\&&+\beta_{4m-8} \,\ g_{\varphi-3m+6}+\beta_{4m-10} \,\ g_{\varphi-3m+7}+\ldots+\beta_{10-4m} \,\ g_{\varphi+m-3}+ \beta_{8-4m}\\&&\times g_{\varphi+m-2}+\beta_{6-4m} \,\ g_{\varphi+m-1} +\beta_{4-4m} \,\ g_{\varphi+m}+\beta_{2-4m} \,\ g_{\varphi+m+1}.
\end{eqnarray*}
Now we replace $\varphi$ by $\varphi-m+2$ in the first and second rules of  (\ref{const9}), we get
\begin{eqnarray*}\label{const13}
g_{2\varphi-2m+3}&=& \beta_{2-4m} \,\ g_{\varphi-3m+2}+\beta_{4-4m} \,\ g_{\varphi-3m+3}+\beta_{6-4m} \,\ g_{\varphi-3m+4}+\beta_{8-4m}\\&& \times g_{\varphi-3m+5}+\beta_{10-4m} \,\ g_{\varphi-3m+6}+\beta_{12-4m} \,\ g_{\varphi-3m+7}+\ldots+\beta_{4m-8} \\&& \times g_{\varphi+m-3}+ \beta_{4m-6} \,\ g_{\varphi+m-2}+\beta_{4m-4} \,\ g_{\varphi+m-1} +\beta_{4m-2} \,\ g_{\varphi+m}+\\&& \beta_{4m} \,\ g_{\varphi+m+1},\\
	g_{2\varphi-2m+4}&=& \beta_{4m} \,\ g_{\varphi-3m+3}+\beta_{4m-2} \,\ g_{\varphi-3m+4}+\beta_{4m-4} \,\ g_{\varphi-3m+5}+\beta_{4m-6} \,\ g_{\varphi-3m+6}\\&& +\beta_{4m-8} \,\ g_{\varphi-3m+7}+\beta_{4m-10} \,\ g_{\varphi-3m+8}+\ldots +\beta_{10-4m} \,\ g_{\varphi+m-2}+\beta_{8-4m} \\&& \times g_{\varphi+m-1}+\beta_{6-4m} \,\ g_{\varphi+m} +\beta_{4-4m} \,\ g_{\varphi+m+1}+\beta_{2-4m} \,\ g_{\varphi+m+2}.
\end{eqnarray*}
Now we replace $\varphi$ by $\varphi-m+3$ in first and second rules of (\ref{const9}), we get
\begin{eqnarray*}\label{const15}
g_{2\varphi-2m+5}&=& \beta_{2-4m} \,\ g_{\varphi-3m+3}+\beta_{4-4m} \,\ g_{\varphi-3m+4}+\beta_{6-4m} \,\ g_{\varphi-3m+5}+\beta_{8-4m} \,\ g_{\varphi-3m+6}\\&& + \beta_{10-4m} \,\ g_{\varphi-3m+7}+\beta_{12-4m} \,\ g_{\varphi-3m+8}+\ldots +\beta_{4m-8} \,\ g_{\varphi+m-2}+\beta_{4m-6} \,\ \\&& \times g_{\varphi+m-1}+\beta_{4m-4} \,\ g_{\varphi+m}+\beta_{4m-2} \,\ g_{\varphi+m+1}+\beta_{4m} \,\ g_{\varphi+m+2},\\
g_{2\varphi-2m+6}&=& \beta_{4m} \,\ g_{\varphi-3m+4}+\beta_{4m-2} \,\ g_{\varphi-3m+5}+\beta_{4m-4} \,\ g_{\varphi-3m+6}+\beta_{4m-6} \,\ g_{\varphi-3m+7}+\\&& \beta_{4m-8} \,\ g_{\varphi-3m+8}+\beta_{4m-10} \,\ g_{\varphi-3m+9}+\ldots  +\beta_{10-4m} \,\ g_{\varphi+m-1}+\beta_{8-4m}\times \\&& g_{\varphi+m}+\beta_{6-4m} \,\ g_{\varphi+m+1}+\beta_{4-4m} \,\ g_{\varphi+m+2}+\beta_{2-4m} \,\ g_{\varphi+m+3}.
\end{eqnarray*}
\,\ \,\ \,\ \,\ \,\ \,\ \,\ \,\ \,\ \,\ \,\ \,\ \,\ \,\ \,\ \,\ \,\ \,\ \,\ \,\ \,\ \vdots \,\ \,\ \,\ \,\ \,\ \,\ \,\ \,\ \,\ \,\ \,\ \,\ \,\ \,\ \,\ \,\ \,\ \,\ \,\ \,\ \,\ \vdots \,\ \,\ \,\ \,\ \,\ \,\ \,\ \,\ \,\ \,\ \,\ \,\ \,\ \,\ \,\ \,\ \,\ \,\ \,\ \,\ \,\ \vdots \\
Continuing this process, we replace $\varphi$ by $\varphi+m-2$ in first and second rules of (\ref{const9}), we get
\begin{eqnarray*}\label{const19}
g_{2\varphi+2m-5}&=&\beta_{2-4m} \,\ g_{\varphi-m-2}+\beta_{4-4m} \,\ g_{\varphi-m-1}+\beta_{6-4m} \,\ g_{\varphi-m}+\beta_{8-4m} \,\ g_{\varphi-m+1}+\\&& \beta_{10-4m} \,\ g_{\varphi-m+2}+\beta_{12-4m} \,\ g_{\varphi-m+3}+\ldots+\beta_{4m-8} \,\ g_{\varphi+3m-7}+\beta_{4m-6} \\&& \times g_{\varphi+3m-6}+\beta_{4m-4} \,\ g_{\varphi+3m-5}+\beta_{4m-2} \,\ g_{\varphi+3m-4}+\beta_{4m} \,\ g_{\varphi+3m-3},\\
	g_{2\varphi+2m-4}&=&\beta_{4m} \,\ g_{\varphi-m-1}+\beta_{4m-2} \,\ g_{\varphi-m}+\beta_{4m-4} \,\ g_{\varphi-m+1}+\beta_{4m-6} \,\ g_{\varphi-m+2}+\\&& \beta_{4m-8} \,\ g_{\varphi-m+3}+\beta_{4m-10} \,\ g_{\varphi-m+4}+ \ldots +\beta_{10-4m} \,\ g_{\varphi+3m-6}+\beta_{8-4m} \\&& \times g_{\varphi+3m-5}+\beta_{6-4m} \,\ g_{\varphi+3m-4}+\beta_{4-4m} \,\ g_{\varphi+3m-3}+\beta_{2-4m} \,\ g_{\varphi+3m-2}.
\end{eqnarray*}
Now we replace $\varphi$ by $\varphi+m-1$ in the first and second rules of (\ref{const9}), we get
\begin{eqnarray*}\label{const21}
g_{2\varphi+2m-3}&=& \beta_{2-4m} \,\ g_{\varphi-m-1}+\beta_{4-4m} \,\ g_{\varphi-m}+\beta_{6-4m} \,\ g_{\varphi-m+1} +\beta_{8-4m} \,\ g_{\varphi-m+2}+\\&& \beta_{10-4m} \,\ g_{\varphi-m+3}+\beta_{12-4m} \,\ g_{\varphi-m+4}+ \ldots +\beta_{4m-8} \,\ g_{\varphi+3m-6}+\beta_{4m-6}\\&& \times g_{\varphi+3m-5}+\beta_{4m-4} \,\ g_{\varphi+3m-4} +\beta_{4m-2} \,\ g_{\varphi+3m-3}+\beta_{4m} \,\ g_{\varphi+3m-2},\\
	g_{2\varphi+2m-2}&=& \beta_{4m} \,\ g_{\varphi-m}+\beta_{4m-2} \,\ g_{\varphi-m+1}+\beta_{4m-4} \,\ g_{\varphi-m+2}+\beta_{4m-6} \,\ g_{\varphi-m+3}+\\&& \beta_{4m-8} \,\ g_{\varphi-m+4}+\beta_{4m-10} \,\ g_{\varphi-m+5}+\ldots +\beta_{12-4m} \,\ g_{\varphi+3m-6}+\beta_{10-4m}\\&& \times g_{\varphi+3m-5}+\beta_{8-4m} g_{\varphi+3m-4}+\beta_{6-4m} \,\ g_{\varphi+3m-3}+\beta_{4-4m} \,\ g_{\varphi+3m-2}+\\&&\beta_{2-4m} \,\ g_{\varphi+3m-1}.
\end{eqnarray*}
Now we replace  $\varphi$ by $\varphi+m$ in first and second rules of (\ref{const9}), we get
\begin{eqnarray*}\label{const23}
g_{2\varphi+2m-1}&=&\beta_{2-4m} \,\ g_{\varphi-m}+\beta_{4-4m} \,\ g_{\varphi-m+1}+\beta_{6-4m} \,\ g_{\varphi-m+2} +\beta_{8-4m} \,\ g_{\varphi-m+3}+\\&& \beta_{10-4m} \,\ g_{\varphi-m+4}+\beta_{12-4m} \,\ g_{\varphi-m+5}+\ldots +\beta_{4m-10} \,\ g_{\varphi+3m-6}+\beta_{4m-8}\\&& \times g_{\varphi+3m-5}+\beta_{4m-6} \,\ g_{\varphi+3m-4}+\beta_{4m-4} \,\ g_{\varphi+3m-3}+\beta_{4m-2} \,\ g_{\varphi+3m-2}+\\&&\beta_{4m} \,\ g_{\varphi+3m-1},\\
	g_{2\varphi+2m}&=& \beta_{4m} \,\ g_{\varphi-m+1}+\beta_{4m-2} \,\ g_{\varphi-m+2}+\beta_{4m-4} \,\ g_{\varphi-m+3}+\beta_{4m-6} \,\ g_{\varphi-m+4}+\\&& \beta_{4m-8} \,\ g_{\varphi-m+5}+\beta_{4m-10} \,\ g_{\varphi-m+6}+\ldots +\beta_{14-4m} \,\ g_{\varphi+3m-6}+\beta_{12-4m} \\&&\times g_{\varphi+3m-5}+\beta_{10-4m} \,\ g_{\varphi+3m-4}+\beta_{8-4m}g_{\varphi+3m-3}+\beta_{6-4m} \,\ g_{\varphi+3m-2}+\\&&\beta_{4-4m} \,\ g_{\varphi+3m-1}+\beta_{2-4m} \,\ g_{\varphi+3m}.
\end{eqnarray*}
We get all the unknowns $\,\ g_{2\varphi-2m},\,\  g_{2\varphi-2m+1}, \,\ g_{2\varphi-2m+2},\ldots, g_{2\varphi+2m-2},\,\ g_{2\varphi+2m-1},\,\ g_{2\varphi+2m} $. Now we substitute all these values in the four equations given in (\ref{const7}):
\begin{eqnarray*}\label{const30}
g_{4\varphi-2}&=&(\beta_{4m}^{2}+\beta_{4m-2}\beta_{2-4m})g_{\varphi-3m+1}
+(\beta_{4m}\beta_{4m-2}+\beta_{4m-2}\beta_{4-4m}+\beta_{4m-4}\beta_{4m}+\beta_{4m-6}\\&& \times \beta_{2-4m}) g_{\varphi-3m+2}+(\beta_{4m}\beta_{4m-4}+\beta_{4m-2} \beta_{6-4m}+\beta_{4m-4}\beta_{4m-2}+\beta_{4m-6}\beta_{4-4m}+\\&&\beta_{4m-8} \beta_{4m}+\beta_{4m-10}\beta_{2-4m})g_{\varphi-3m+3}+(\beta_{4m} \beta_{4m-6}+\beta_{4m-2}\beta_{8-4m}+\beta_{4m-4}^{2}+\beta_{4m-6}\\&&\times \beta_{6-4m}+\beta_{4m-8} \beta_{4m-2}+\beta_{4m-10}\beta_{4-4m}+\beta_{4m-12}\beta_{4m}+\beta_{4m-14}\beta_{2-4m}) g_{\varphi-3m+4}+\\&&(\beta_{4m}\beta_{4m-8}+\beta_{4m-2}\beta_{10-4m}+\beta_{4m-4}\beta_{4m-6} +\beta_{4m-6}\beta_{8-4m}+\beta_{4m-8}\beta_{4m-4}+\beta_{4m-10}\\&&\times \beta_{6-4m}+\beta_{4m-12}  \beta_{4m-2}+\beta_{4m-14}\beta_{4-4m}+\beta_{4m-16}\beta_{4m}+\beta_{4m-18}\beta_{2-4m}) g_{\varphi-3m+5}+\\&&(\beta_{4m}\beta_{4m-10}+\beta_{4m-2}\beta_{12-4m}+\beta_{4m-4}\beta_{4m-8}+\beta_{4m-6}
\beta_{10-4m}+\beta_{4m-8}\beta_{4m-6}+\beta_{4m-10}\\&&\times \beta_{8-4m}+\beta_{4m-12} \beta_{4m-4}+\beta_{4m-14}\beta_{6-4m}+\beta_{4m-16}\beta_{4m-2}+\beta_{4m-18} \beta_{4-4m}+\beta_{4m-20}\times\\&&\beta_{4m}+\beta_{4m-22}\beta_{2-4m})g_{\varphi-3m+6}
+\ldots+(\beta_{24-4m}\beta_{2-4m}+\beta_{22-4m}\beta_{4m}+\beta_{20-4m}\beta_{4-4m}+\\&&\beta_{18-4m}\beta_{4m-2}+
\beta_{16-4m} \beta_{6-4m}+\beta_{14-4m}\beta_{4m-4}+\beta_{12-4m}\beta_{8-4m}+\beta_{10-4m} \beta_{4m-6}+\beta_{8-4m}\\&&\times\beta_{10-4m}+\beta_{6-4m}\beta_{4m-8}+\beta_{4-4m} \beta_{12-4m}+\beta_{2-4m}\beta_{4m-10})g_{\varphi+3m-6}+(\beta_{20-4m}\beta_{2-4m}+\\&&\beta_{18-4m}  \beta_{4m}+\beta_{16-4m}\beta_{4-4m}+\beta_{14-4m}\beta_{4m-2}+\beta_{12-4m}\beta_{6-4m}+\beta_{10-4m} \beta_{4m-4}+\beta_{8-4m}^{2}\\&&+\beta_{6-4m}\beta_{4m-6}+\beta_{4-4m}\beta_{10-4m}+\beta_{2-4m} \beta_{4m-8})g_{\varphi+3m-5}+(\beta_{16-4m} \beta_{2-4m}+\beta_{14-4m}\beta_{4m}\\&&+\beta_{12-4m} \beta_{4-4m}+\beta_{10-4m}\beta_{4m-2}+\beta_{8-4m}\beta_{6-4m}+\beta_{6-4m}\beta_{4m-4}+\beta_{4-4m} \beta_{8-4m}+\beta_{2-4m}\\&&\times\beta_{4m-6})g_{\varphi+3m-4}
+(\beta_{12-4m}\beta_{2-4m}+\beta_{10-4m}\beta_{4m}+\beta_{8-4m}\beta_{4-4m}+\beta_{6-4m} \beta_{4m-2}+\beta_{4-4m}\\&&\times\beta_{6-4m}+\beta_{2-4m}\beta_{4m-4}) g_{\varphi+3m-3}+(\beta_{8-4m}   \beta_{2-4m}+\beta_{6-4m}\beta_{4m}+\beta_{4-4m}^{2}+\beta_{2-4m} \beta_{4m-2})\\&&\times g_{\varphi+3m-2}+(\beta_{4-4m}\beta_{2-4m}+\beta_{2-4m}\beta_{4m})g_{\varphi+3m-1},
\end{eqnarray*}
\begin{eqnarray*}
g_{4\varphi-1}&=& (\beta_{2-4m}\beta_{4m}+\beta_{4-4m}\beta_{2-4m})g_{\varphi-3m+1}
+(\beta_{2-4m}\beta_{4m-2}+\beta_{4-4m}^{2}+\beta_{6-4m}\beta_{4m}\\&&+\beta_{8-4m}\beta_{2-4m})g_{\varphi-3m+2}+
(\beta_{2-4m}\beta_{4m-4}+\beta_{4-4m}\beta_{6-4m}+\beta_{6-4m}\beta_{4m-2}+\\&&\beta_{8-4m} \beta_{4-4m}+\beta_{10-4m} \beta_{4m}+\beta_{12-4m}\beta_{2-4m})g_{\varphi-3m+3}+(\beta_{2-4m} \beta_{4m-6}+\beta_{4-4m}\\&&\times\beta_{8-4m}+\beta_{6-4m}\beta_{4m-4}+\beta_{8-4m} \beta_{6-4m}+\beta_{10-4m}\beta_{4m-2}+\beta_{12-4m}\beta_{4-4m}+\\&&\beta_{14-4m}\beta_{4m}+\beta_{16-4m} \beta_{2-4m})g_{\varphi-3m+4}+(\beta_{2-4m}\beta_{4m-8}+\beta_{4-4m} \beta_{10-4m}+\beta_{6-4m}\\&& \times \beta_{4m-6}+\beta_{8-4m}^{2}+\beta_{10-4m}\beta_{4m-4}+\beta_{12-4m} \beta_{6-4m}+\beta_{14-4m} \beta_{4m-2}+\\&&\beta_{16-4m}\beta_{4-4m}+\beta_{18-4m}\beta_{4m}+\beta_{20-4m}\beta_{2-4m}) g_{\varphi-3m+5}+(\beta_{2-4m}\beta_{4m-10}+\beta_{4-4m}\\&&\times\beta_{12-4m}+\beta_{6-4m}\beta_{4m-8}+\beta_{8-4m}
\beta_{10-4m}+\beta_{10-4m}\beta_{4m-6}+\beta_{12-4m}\beta_{8-4m}+\\&&\beta_{14-4m} \beta_{4m-4}+\beta_{16-4m} \beta_{6-4m}+\beta_{18-4m}\beta_{4m-2}+\beta_{20-4m} \beta_{4-4m}+\beta_{22-4m}\beta_{4m}+\\&&\beta_{24-4m}\beta_{2-4m})g_{\varphi-3m+6}+\ldots +(\beta_{4m-22} \beta_{2-4m}+\beta_{4m-20} \beta_{4m}+\beta_{4m-18} \beta_{4-4m}+\\&&\beta_{4m-16}\beta_{4m-2}+\beta_{4m-14}\beta_{6-4m} +\beta_{4m-12}\beta_{4m-4}+\beta_{4m-10}\beta_{8-4m}+\beta_{4m-8}\beta_{4m-6}+\\&&\beta_{4m-6} \beta_{10-4m}+\beta_{4m-4}\beta_{4m-8}+\beta_{4m-2}\beta_{12-4m}+\beta_{4m}\beta_{4m-10})g_{\varphi+3m-6}+
(\beta_{4m-18}\\&&\times\beta_{2-4m}+\beta_{4m-16}\beta_{4m}+\beta_{4m-14}\beta_{4-4m} +\beta_{4m-12} \beta_{4m-2}+\beta_{4m-10}\beta_{6-4m}+\beta_{4m-8}\\&&\times\beta_{4m-4}+\beta_{4m-6}\beta_{8-4m}+\beta_{4m-4}\beta_{4m-6} +\beta_{4m-2}\beta_{10-4m}+\beta_{4m}\beta_{4m-8})g_{\varphi+3m-5}+\\&&(\beta_{4m-14}\beta_{2-4m} +\beta_{4m-12}\beta_{4m}+\beta_{4m-10}\beta_{4-4m}+\beta_{4m-8}\beta_{4m-2}+\beta_{4m-6}\beta_{6-4m}+
\\&&\beta_{4m-4}^{2} +\beta_{4m-2}\beta_{8-4m}+\beta_{4m}\beta_{4m-6})g_{\varphi+3m-4}+
(\beta_{4m-10}\beta_{2-4m}+\beta_{4m-8}\beta_{4m}+\\&&\beta_{4m-6}\beta_{4-4m}+\beta_{4m-4}\beta_{4m-2} +\beta_{4m-2}\beta_{6-4m}+\beta_{4m}\beta_{4m-4}) g_{\varphi+3m-3}+(\beta_{4m-6}\times\\&&\beta_{2-4m}+\beta_{4m-4}\beta_{4m}+\beta_{4m-2} \beta_{4-4m}+\beta_{4m}\beta_{4m-2})g_{\varphi+3m-2}+(\beta_{4m-2}\beta_{2-4m}+\\&&\beta_{4m}^{2})g_{\varphi+3m-1},
\end{eqnarray*}
\begin{eqnarray*}
g_{4\varphi}&=&(\beta_{4m}\beta_{2-4m})g_{\varphi-3m+1}+(\beta_{4m}\beta_{4-4m}+\beta_{4m-2}\beta_{4m}+\beta_{4m -4}\beta_{2-4m})g_{\varphi-3m+2}+(\beta_{4m}\beta_{6-4m}\\&&+\beta_{4m-2}^{2}+\beta_{4m-4} \beta_{4-4m}+\beta_{4m-6}\beta_{4m}+\beta_{4m-8}\beta_{2-4m})g_{\varphi-3m+3}+(\beta_{4m}\beta_{8-4m}+
\beta_{4m-2}\times\\&&\beta_{4m-4}+\beta_{4m-4}\beta_{6-4m}+\beta_{4m-6}\beta_{4m-2}+\beta_{4m-8} \beta_{4-4m}+\beta_{4m-10}\beta_{4m}+\beta_{4m-12}\beta_{2-4m})\times\\&&g_{\varphi-3m+4}+(\beta_{4m}\beta_{10-4m}+
\beta_{4m-2}\beta_{4m-6}+\beta_{4m-4}\beta_{8-4m}+\beta_{4m-6}\beta_{4m-4}+\beta_{4m-8} \beta_{6-4m}\\&&+\beta_{4m-10}\beta_{4m-2}+\beta_{4m-12}\beta_{4-4m}+\beta_{4m-14}\beta_{4m}+\beta_{4m-16}
\beta_{2-4m})g_{\varphi-3m+5}+(\beta_{4m}\beta_{12-4m}\\&&+\beta_{4m-2}\beta_{4m-8}+\beta_{4m-4}\beta_{10-4m}+
\beta_{4m-6}^{2}+\beta_{4m-8}\beta_{8-4m}+\beta_{4m-10}\beta_{4m-4}+\beta_{4m-12}\beta_{6-4m}
\\&&+\beta_{4m-14}\beta_{4m-2}+\beta_{4m-16}\beta_{4-4m}+\beta_{4m-18}\beta_{4m}+\beta_{4m-20}\beta_{2-4m}) g_{\varphi-3m+6}+\ldots+(\beta_{24-4m}\\&&\times\beta_{4m}+\beta_{22-4m}\beta_{4-4m}+\beta_{20-4m}\beta_{4m-2}+
\beta_{18-4m}\beta_{6-4m} +\beta_{16-4m}\beta_{4-4m} +\beta_{14-4m}\beta_{8-4m}\\&& +\beta_{12-4m}\beta_{4m-6} +\beta_{10-4m}^{2} +\beta_{8-4m}\beta_{4m-8} +\beta_{6-4m}\beta_{12-4m}+\beta_{4-4m}\beta_{4m-10} +\beta_{2-4m}\beta_{14-4m})\\&&\times g_{\varphi+3m-6}+(\beta_{22-4m}\beta_{2-4m}+\beta_{20-4m}\beta_{4m}+
\beta_{18-4m}\beta_{4-4m} +\beta_{16-4m}\beta_{4m-2} +\beta_{14-4m}\beta_{6-4m} \\&&+\beta_{12-4m}\beta_{4m-4} +\beta_{10-4m}\beta_{8-4m}+\beta_{8-4m}\beta_{4m-6} +\beta_{6-4m}\beta_{10-4m}+\beta_{4-4m}\beta_{4m-8} +\beta_{2-4m}\times\\&&\beta_{12-4m})g_{\varphi+3m-5}+(\beta_{18-4m}
\beta_{2-4m} +\beta_{16-4m}\beta_{4m} +\beta_{14-4m}\beta_{4-4m} +\beta_{12-4m}\beta_{4m-2} +\beta_{10-4m}\\&&\times\beta_{6-4m}+\beta_{8-4m}\beta_{4m-4} +\beta_{6-4m}\beta_{8-4m}+\beta_{4-4m}\beta_{4m-6} +\beta_{2-4m}\beta_{10-4m})g_{\varphi+3m-4}+(\beta_{14-4m}\\&&\times\beta_{2-4m} +\beta_{12-4m}\beta_{4m} +\beta_{10-4m}\beta_{4-4m}+\beta_{8-4m}\beta_{4m-2} +\beta_{6-4m}^{2}+\beta_{4-4m}\beta_{4m-4} +\beta_{2-4m}\times\\&&\beta_{8-4m})g_{\varphi+3m-3}+(\beta_{10-4m} \beta_{2-4m}+\beta_{8-4m}\beta_{4m}+\beta_{6-4m} \beta_{4-4m}+\beta_{4-4m}\beta_{4m-2}+\beta_{2-4m}\times\\&&\beta_{6-4m})g_{\varphi+3m-2}+(\beta_{6-4m}\beta_{2-4m}+ \beta_{4-4m} \beta_{4m}+\beta_{2-4m}\beta_{4-4m})g_{\varphi+3m-1}+(\beta_{2-4m}^{2})g_{\varphi+3m}
\end{eqnarray*}
and
\begin{eqnarray*}
g_{4\varphi+1}&=&(\beta_{2-4m}^{2})g_{\varphi-3m+1}+(\beta_{2-4m}\beta_{4-4m}+\beta_{4-4m} \beta_{4m}+\beta_{6-4m}\beta_{2-4m})g_{\varphi-3m+2}+(\beta_{2-4m}\\&&\times \beta_{6-4m}+\beta_{4-4m} \beta_{4m-2}+\beta_{6-4m}\beta_{4-4m}+\beta_{8-4m}\beta_{4m}+\beta_{10-4m}\beta_{2-4m}) g_{\varphi-3m+3}+\\&&(\beta_{2-4m}\beta_{8-4m}+\beta_{4-4m}\beta_{4m-4}+\beta_{6-4m}^{2}+\beta_{8-4m} \beta_{4m-2}+\beta_{10-4m}\beta_{4-4m}+\beta_{12-4m}\\&&\times\beta_{4m}+\beta_{14-4m}\beta_{2-4m}) g_{\varphi-3m+4}+(\beta_{2-4m}\beta_{10-4m}+\beta_{4-4m}\beta_{4m-6}+\beta_{6-4m}\beta_{8-4m}\\&&+\beta_{8-4m} \beta_{4m-4}+\beta_{10-4m}\beta_{6-4m}+\beta_{12-4m}\beta_{4m-2}+\beta_{14-4m}\beta_{4-4m}+\beta_{16-4m}
\beta_{4m}+\\&&\beta_{18-4m}\beta_{2-4m})g_{\varphi-3m+5}+(\beta_{2-4m}\beta_{12-4m}+\beta_{4-4m}\beta_{4m-8}+
\beta_{6-4m}\beta_{10-4m}+\beta_{8-4m}\\&&\beta_{4m-6}+\beta_{10-4m}\beta_{8-4m}+\beta_{12-4m}\beta_{4m-4}
+\beta_{14-4m}\beta_{6-4m}+\beta_{16-4m}\beta_{4m-2}+\beta_{18-4m}\\&&\times\beta_{4-4m}+\beta_{20-4m}\beta_{4m}+
\beta_{22-4m}\beta_{2-4m})g_{\varphi-3m+6}+\ldots+(\beta_{4m-22}\beta_{4m}+\beta_{4m-20}\times\\&&\beta_{4-4m}+\beta_{4m-18} \beta_{4m-2}+\beta_{4m-16}\beta_{6-4m}+\beta_{4m-14}\beta_{4m-4}+\beta_{4m-12}\beta_{8-4m}+\beta_{4m-10}
\\&&\times\beta_{4m-6}+\beta_{4m-8}\beta_{10-4m}+\beta_{4m-6}\beta_{4m-8}+\beta_{4m-4}\beta_{12-4m}+\beta_{4m-2}\beta_{4m-10}
+\beta_{4m}\times\\&&\beta_{14-4m})g_{\varphi+3m-6}+(\beta_{4m-20}\beta_{2-4m}+\beta_{4m-18} \beta_{4m}+\beta_{4m-16}\beta_{4-4m}+\beta_{4m-14}\beta_{4m-2}+\\&&\beta_{4m-12}\beta_{6-4m}+\beta_{4m-10}
\beta_{4m-4}+\beta_{4m-8}\beta_{8-4m}+\beta_{4m-6}^{2}+\beta_{4m-4}\beta_{10-4m}+\beta_{4m-2}\times\\&&\beta_{4m-8}
+\beta_{4m}\beta_{12-4m}) g_{\varphi+3m-5}+(\beta_{4m-16}\beta_{2-4m}+\beta_{4m-14}\beta_{4m}+\beta_{4m-12}
\beta_{4-4m}+\\&&\beta_{4m-10}\beta_{4m-2}+\beta_{4m-8} \beta_{6-4m}+\beta_{4m-6}\beta_{4m-4}+\beta_{4m-4}\beta_{8-4m}+
\beta_{4m-2}\beta_{4m-6}+\beta_{4m}\\&&\times\beta_{10-4m})g_{\varphi+3m-4}+(\beta_{4m-12}
\beta_{2-4m}+\beta_{4m-10}\beta_{4m}+\beta_{4m-8}\beta_{4-4m}+\beta_{4m-6}\beta_{4m-2}+\\&&\beta_{4m-4}\beta_{6-4m}+
\beta_{4m-2}\beta_{4m-4}+\beta_{4m}\beta_{8-4m})g_{\varphi+3m-3}+(\beta_{4m-8}\beta_{2-4m}+\beta_{4m-6}
\beta_{4m}+\\&&\beta_{4m-4}\beta_{4-4m}+\beta_{4m-2}^{2}+\beta_{4m}\beta_{6-4m})g_{\varphi+3m-2}+(\beta_{4m-4}
\beta_{2-4m}+\beta_{4m-2}\beta_{4m}+\beta_{4m}\times\\&&\beta_{4-4m})g_{\varphi+3m-1}+(\beta_{4m}\beta_{2-4m})g_{\varphi+3m}.
\end{eqnarray*}
Which can be written in the following compact form
\begin{eqnarray}\label{const31}
	\left\{\begin{array}{cccc}
		g_{4\varphi-2}&=&\sum\limits_{\lambda=-m+1}^{m}\beta_{4-4\lambda}\left(\sum\limits_{\alpha=-2m}^{2m-1}\beta_{-2\alpha} \,\ g_{\varphi+\alpha+\lambda}\right)+\sum\limits_{\lambda=-m+1}^{m}\beta_{2-4\lambda} \left(\sum\limits_{\alpha=-2m}^{2m-1}\beta_{2+2\alpha} \,\ g_{\varphi+\alpha+\lambda}\right),\\ \\
		g_{4\varphi-1}&=&\sum\limits_{\lambda=-m+1}^{m}\beta_{4\lambda-2}\left(\sum\limits_{\alpha=-2m}^{2m-1}\beta_{-2\alpha} \,\ g_{\varphi+\alpha+\lambda}\right)+\sum\limits_{\lambda=-m+1}^{m}\beta_{4\lambda}
		\left(\sum\limits_{\alpha=-2m}^{2m-1}\beta_{2+2\alpha} \,\ g_{\varphi+\alpha+\lambda}\right),\\ \\
		g_{4\varphi}&=&\sum\limits_{\lambda=-m+1}^{m}\beta_{4-4\lambda}\left(\sum\limits_{\alpha=-2m}^{2m-1}\beta_{2+2\alpha} \,\ g_{\varphi+\alpha+\lambda}\right)+\sum\limits_{\lambda=-m+1}^{m}\beta_{2-4\lambda}
		\left(\sum\limits_{\alpha=-2m}^{2m-1}\beta_{-2\alpha} \,\ g_{\varphi+\alpha+\lambda+1}\right),\\ \\
		g_{4\varphi+1}&=&\sum\limits_{\lambda=-m+1}^{m}\beta_{4\lambda-2}\left(\sum\limits_{\alpha=-2m}^{2m-1}
		\beta_{2+2\alpha} \,\ g_{\varphi+\alpha+\lambda}\right)+\sum\limits_{\lambda=-m+1}^{m}\beta_{4\lambda}
		\left(\sum\limits_{\alpha=-2m}^{2m-1}\beta_{-2\alpha} \,\ g_{\varphi+\alpha+\lambda+1}\right).
	\end{array}\right.
\end{eqnarray}
This completes the proof.
\end{proof}
The following theorem presents a connection between the $4m$-point binary and the $(6m-1)$-point relaxed quaternary subdivision schemes.
\begin{thm}\label{thm-even-2}
If $n=2m$, then the subdivision equations given in (\ref{const31}) gives the four subdivision rules of the $(6m-1)$-point relaxed quaternary subdivision scheme whose  coefficients of the control points in the subdivision rules are the non-linear combination of the coefficients of the control points of the $4m$-point binary subdivision scheme.
\end{thm}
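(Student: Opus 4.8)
The plan is to recognise that the four identities assembled in the compact form (\ref{const31}) are \emph{already} written in the canonical shape of a quaternary scheme, so that the statement reduces to reorganising them and reading off the resulting mask. First I would collapse the two nested sums appearing in each rule of (\ref{const31}) into a single sum indexed by the control point: in every double sum I substitute $j=\alpha+\lambda$ (and $j=\alpha+\lambda+1$ for the shifted inner sums), gather all contributions that multiply a fixed $g_{\varphi+j}$, and call the resulting coefficient $a^{(\eta)}_{j}$ for $\eta\in\{-2,-1,0,1\}$. Each rule then takes the form $g_{4\varphi+\eta}=\sum_{j}a^{(\eta)}_{j}\,g_{\varphi+j}$, which is precisely the general $r$-ary rule (\ref{GSS}) with $r=4$ and $\eta=-2,-1,0,1$; equivalently, writing $a_{4j+\eta}:=a^{(\eta)}_{j}$ exhibits a single quaternary mask $\{a_{\ell}\}$.

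Next I would read off the coefficients. Every $a^{(\eta)}_{j}$ is assembled from terms of the shapes $\beta_{4-4\lambda}\beta_{-2\alpha}$, $\beta_{2-4\lambda}\beta_{2+2\alpha}$, $\beta_{4\lambda-2}\beta_{-2\alpha}$, and $\beta_{4\lambda}\beta_{2+2\alpha}$, so each coefficient of the quaternary scheme is a sum of pairwise products of the binary mask entries, i.e. a bilinear---hence non-linear---combination of $\{\beta_{\lambda}\}$; this is exactly what the long explicit expansions of $g_{4\varphi-2},\dots,g_{4\varphi+1}$ in the proof of Theorem~\ref{thm-even-1} display term by term (for instance the leading coefficient $\beta_{4m}^{2}+\beta_{4m-2}\beta_{2-4m}$ and the trailing coefficient $\beta_{2-4m}^{2}$). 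The support is then fixed by the range of the collapsed index $j$: by Theorem~\ref{thm-even-1} the rules $g_{4\varphi-2},g_{4\varphi-1}$ run over $g_{\varphi-3m+1},\dots,g_{\varphi+3m-1}$ and the rules $g_{4\varphi},g_{4\varphi+1}$ over $g_{\varphi-3m+1},\dots,g_{\varphi+3m}$, so with $n=2m$ (hence $3n-1=6m-1$) this is the $(6m-1)$-point relaxed quaternary scheme of the statement, the widest of its four rules involving $6m$ points.

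To certify that the object obtained is a genuine (convergent) quaternary scheme rather than a formal array of numbers, I would check the affine-invariance condition $\sum_{j}a^{(\eta)}_{j}=1$ for every $\eta$. The cleanest way to see this, and the organising principle behind the whole computation, is that each quaternary rule is the two-fold composition of binary rules: (\ref{const6}) expresses $g_{4\varphi+\eta}$ through the intermediate points $g_{2\varphi+\cdots}$ by one binary step, and the intermediate expansions of Theorem~\ref{thm-even-1} express each $g_{2\varphi+\cdots}$ through $g_{\varphi+\cdots}$ by a second binary step. Since a binary rule carries a partition of unity to a partition of unity, the composite coefficient mass of each quaternary rule regroups into the product of the two binary normalisation sums, each equal to $1$ by the convergence condition $\sum\beta=1$; equivalently, if $a(z)=\sum_{\lambda}\beta_{\lambda}z^{\lambda}$ is the binary symbol, the composite symbol is (up to normalisation) $a(z)\,a(z^{2})$, whose coefficients are the convolution-type products of the $\beta$'s (reproducing the bilinear structure) and whose value at $z=1$ delivers the sum-to-one conditions.

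The one genuine obstacle is index bookkeeping rather than any conceptual point: after the substitution $j=\alpha+\lambda$ one must verify that the coefficient attached to each $g_{\varphi+j}$, assembled from the four double sums, coincides with the explicitly tabulated quadratic expression, and that the extreme coefficients pin the support down to exactly $6m-1$ (respectively $6m$) points with no spurious cancellation at the endpoints. Matching these boundary terms carefully is where the bulk of the routine labour lies, but no step beyond collecting like terms and invoking the binary normalisation is required.
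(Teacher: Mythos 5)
Your proposal is correct and follows essentially the same route as the paper: the paper's proof of Theorem \ref{thm-even-2} simply reattaches the subdivision-level superscripts to the level-free rules (\ref{const31}), displays the resulting four rules (\ref{1q}), and observes that each mask coefficient is a sum of products of two entries of the binary mask (\ref{1b}), i.e.\ a non-linear (bilinear) combination. Your additional observations --- collapsing the double sums via $j=\alpha+\lambda$ to match the canonical $r=4$ form of (\ref{GSS}), inheriting the support count from Theorem \ref{thm-even-1}, and certifying the sum-to-one condition through the symbol factorization $a(z)\,a(z^{2})$, which makes explicit that the quaternary scheme is the two-fold composition of the binary one --- go beyond what the paper writes down but are consistent with its derivation and only strengthen the argument.
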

\begin{proof}
Now we add the subdivision level on the subdivision rules given in (\ref{const31}), Hence we get the following $(6m-1)$-point relaxed quaternary subdivision scheme
\begin{eqnarray}\label{1q}
\left\{\begin{array}{cccc}
g_{4\varphi-2}^{k+1}&=&\sum\limits_{\lambda=-m+1}^{m}\sum\limits_{\alpha=-2m}^{2m-1}\beta_{4-4\lambda}\beta_{-2\alpha} \,\ g_{\varphi+\alpha+\lambda}^{k}+\sum\limits_{\lambda=-m+1}^{m}\sum\limits_{\alpha=-2m}^{2m-1}\beta_{2-4\lambda} \beta_{2+2\alpha} \,\ g_{\varphi+\alpha+\lambda}^{k},\\ \\

g_{4\varphi-1}^{k+1}&=&\sum\limits_{\lambda=-m+1}^{m}\sum\limits_{\alpha=-2m}^{2m-1}\beta_{4\lambda-2}\beta_{-2\alpha} \,\ g_{\varphi+\alpha+\lambda}^{k}+\sum\limits_{\lambda=-m+1}^{m}\sum\limits_{\alpha=-2m}^{2m-1}\beta_{4\lambda}
\beta_{2+2\alpha} \,\ g_{\varphi+\alpha+\lambda}^{k},\\ \\

g_{4\varphi}^{k+1}&=&\sum\limits_{\lambda=-m+1}^{m}\sum\limits_{\alpha=-2m}^{2m-1}\beta_{4-4\lambda}\beta_{2+2\alpha} \,\ g_{\varphi+\alpha+\lambda}^{k}+\sum\limits_{\lambda=-m+1}^{m}\sum\limits_{\alpha=-2m}^{2m-1}\beta_{2-4\lambda}
\beta_{-2\alpha} \,\ g_{\varphi+\alpha+\lambda+1}^{k},\\ \\

g_{4\varphi+1}^{k+1}&=&\sum\limits_{\lambda=-m+1}^{m}\sum\limits_{\alpha=-2m}^{2m-1}\beta_{4\lambda-2}
\beta_{2+2\alpha} \,\ g_{\varphi+\alpha+\lambda}^{k}+\sum\limits_{\lambda=-m+1}^{m}\sum\limits_{\alpha=-2m}^{2m-1}\beta_{4\lambda}
\beta_{-2\alpha} \,\ g_{\varphi+\alpha+\lambda+1}^{k}.
\end{array}\right.
\end{eqnarray}
The mask coefficients of quaternary subdivision scheme (\ref{1q}) is the non-linear combination of the mask of the following $4m$-point binary subdivision scheme which we get by using $n=2m$ in (\ref{const1}).
\begin{equation}\label{1b}
\left\{\begin{array}{c}
{g}_{2\varphi-1}^{k+1}=\sum\limits_{\lambda=-2m+1}^{2m}\beta_{2\lambda}\,\ g_{\varphi+\lambda-1}^{k},\\ \\
{g}_{2\varphi}^{k+1}=\sum\limits_{\lambda=-2m+1}^{2m}\beta_{2-2\lambda} \,\ g_{\varphi+\lambda}^{k},
\end{array}\right.
\end{equation}
\end{proof}
The given theorems prove the generlized results about the odd $n$. 
\begin{thm}\label{thm-odd-1}
	If $n$ is odd, that is $n=2m+1: m\in \mathbb{N}$, then the subdivision rules $g_{4\varphi-2}$ and $g_{4\varphi-1}$ in (\ref{const6}) are the linear combination of $6m+3$ control points $g_{\varphi-3m-1}\ldots g_{\varphi+3m+1}$, while the subdivision rules $g_{4\varphi}$ and $g_{4\varphi+1}$ in (\ref{const6}) are the linear combination of $6m+2$ control points $g_{\varphi-3m}\ldots g_{\varphi+3m+1}$. 
\end{thm}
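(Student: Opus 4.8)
The plan is to mirror, step for step, the proof of Theorem \ref{thm-even-1}, replacing the substitution $n=2m$ by $n=2m+1$ throughout. First I would put $n=2m+1$ in the four equations (\ref{const6}); each sum then runs over $\lambda=-2m,\ldots,2m+1$, so that $g_{4\varphi-2}$ and $g_{4\varphi-1}$ are expressed through the intermediate points $g_{2\varphi-2m-1},\ldots,g_{2\varphi+2m}$, while $g_{4\varphi}$ and $g_{4\varphi+1}$ are expressed through $g_{2\varphi-2m},\ldots,g_{2\varphi+2m+1}$. Taking the union, the complete list of intermediate points to be resolved is $g_{2\varphi-2m-1},\ldots,g_{2\varphi+2m+1}$, which is $4m+3$ points.

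Next I would record the two explicit binary rules obtained from (\ref{aa2}) at $n=2m+1$, the odd-$n$ analogue of (\ref{const9}): $g_{2\varphi-1}=\sum_{\lambda=-2m}^{2m+1}\beta_{2\lambda}\,g_{\varphi+\lambda-1}$ and $g_{2\varphi}=\sum_{\lambda=-2m}^{2m+1}\beta_{2-2\lambda}\,g_{\varphi+\lambda}$, whose masks now run over the even indices $\beta_{-4m},\ldots,\beta_{4m+2}$. Each intermediate point is then generated by the same shifting device as before: an intermediate point with odd offset, $g_{2\varphi+2t-1}=g_{2(\varphi+t)-1}$, is obtained by replacing $\varphi$ by $\varphi+t$ in the first rule, and one with even offset, $g_{2\varphi+2t}=g_{2(\varphi+t)}$, by replacing $\varphi$ by $\varphi+t$ in the second rule. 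The odd-offset points force $t$ to run over the asymmetric window $t=-m,\ldots,m+1$, whereas the even-offset points use $t=-m,\ldots,m$; together these reproduce all $4m+3$ intermediate points.

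I would then substitute these expansions into the four rules of the first step and collect the coefficient of each control point. The decisive bookkeeping is to locate the extreme control points. For $g_{4\varphi-2}$ and $g_{4\varphi-1}$ the leftmost control point arises from expanding $g_{2\varphi-2m-1}$ through the first rule, giving $g_{\varphi-3m-1}$, and the rightmost from expanding $g_{2\varphi+2m}$ through the second rule, giving $g_{\varphi+3m+1}$; hence these two rules span $g_{\varphi-3m-1},\ldots,g_{\varphi+3m+1}$, i.e.\ $6m+3$ points. For $g_{4\varphi}$ and $g_{4\varphi+1}$ the leftmost point comes from $g_{2\varphi-2m}$, giving $g_{\varphi-3m}$, and the rightmost from $g_{2\varphi+2m+1}$, giving $g_{\varphi+3m+1}$, so the span is $g_{\varphi-3m},\ldots,g_{\varphi+3m+1}$, i.e.\ $6m+2$ points. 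Finally I would verify that the inner coefficient sums telescope cleanly and repackage the result in the compact double-sum form analogous to (\ref{const31}), with the outer index ranging over $-m,\ldots,m$ or $-m,\ldots,m+1$ as dictated by the shifted windows.

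I expect the main obstacle to be purely the index bookkeeping, not any new idea. Because $n$ is odd, the symmetric window $-m,\ldots,m$ of the even case is replaced by the asymmetric window $-m,\ldots,m+1$ for the first rule, so the two binary rules are no longer in mirror-symmetric positions at the two ends of the support; each endpoint must therefore be recomputed from scratch rather than quoted from Theorem \ref{thm-even-1}. The delicate checks are that no off-by-one error creeps in at either end, so that the counts come out exactly as $6m+3$ and $6m+2$, and that the boundary coefficients collected at $g_{\varphi-3m-1}$, $g_{\varphi-3m}$ and $g_{\varphi+3m+1}$ are genuinely nonzero combinations of the mask values, so that the support is neither shorter nor longer than claimed.
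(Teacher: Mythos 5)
Your proposal is correct and follows essentially the same route as the paper: substitute $n=2m+1$ into (\ref{const6}), resolve the $4m+3$ intermediate points $g_{2\varphi-2m-1},\ldots,g_{2\varphi+2m+1}$ by shifting $\varphi$ in the two binary rules of (\ref{aa2}) (with the asymmetric shift window $\varphi-m,\ldots,\varphi+m+1$ for the odd-offset points, exactly as the paper does), substitute back, and read off the extreme supports $g_{\varphi-3m-1},\ldots,g_{\varphi+3m+1}$ and $g_{\varphi-3m},\ldots,g_{\varphi+3m+1}$. The only cosmetic difference is in the final repackaging: the paper's compact form (\ref{const75}) keeps the outer index at $\lambda=-m,\ldots,m$ in all four rules, absorbing the asymmetry into the parity split of the coefficients $\beta_{2-4\lambda}$, $\beta_{-4\lambda}$, $\beta_{4\lambda}$, $\beta_{2+4\lambda}$ and a unit shift of the inner argument, which is equivalent to your $-m,\ldots,m+1$ window after reindexing.
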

\begin{proof}
When $n$ is odd, we put $n=2m+1$ in (\ref{const6}). That is
\begin{equation}\label{const51}
\left\{\begin{array}{cccc}
g_{4\varphi-2}=\sum\limits_{\lambda=-2m}^{2m+1}\beta_{2-2\lambda} \,\ g_{2\varphi+\lambda-1},\,\ \,\ \,\
g_{4\varphi-1}=\sum\limits_{\lambda=-2m}^{2m+1}\beta_{2\lambda} \,\ g_{2\varphi+\lambda-1},\\\\
g_{4\varphi}=\sum\limits_{\lambda=-2m}^{2m+1}\beta_{2-2\lambda} \,\ g_{2\varphi+\lambda},\,\ \,\ \,\
g_{4\varphi+1}=\sum\limits_{\lambda=-2m}^{2m+1}\beta_{2\lambda} \,\ g_{2\varphi+\lambda}.
\end{array}\right.
\end{equation}
Now we have to find out values of $g_{2\varphi-2m-1}$, $g_{2\varphi-2m}$, $g_{2\varphi-2m+1}$, $\ldots$, $g_{2\varphi+2m}$, $g_{2\varphi+2m+1}$. For this, first we evaluate (\ref{aa2}) for $n=2m+1$ and then by increasing or decreasing the subscript, we get the required unknowns.
\begin{eqnarray}\label{const53}
\left\{\begin{array}{cccc}
 g_{2\varphi-1}&=&\beta_{-4m} \,\ g_{\varphi-2m-1}+ \beta_{2-4m} \,\ g_{\varphi-2m}+\beta_{4-4m} \,\ g_{\varphi-2m+1}+\beta_{6-4m} \,\ g_{\varphi-2m+2}+\beta_{8-4m}\\&&\times g_{\varphi-2m+3} +\beta_{10-4m} \,\ g_{\varphi-2m+4}+\beta_{12-4m} \,\ g_{\varphi-2m+5}+\ldots +\beta_{4m-8} \,\ g_{\varphi+2m-5}+\\&& \beta_{4m-6} \,\ g_{\varphi+2m-4}+\beta_{4m-4} \,\ g_{\varphi+2m-3} +\beta_{4m-2} \,\ g_{\varphi+2m-2}+\beta_{4m} \,\ g_{\varphi+2m-1}+\beta_{4m+2}\\&&\times g_{\varphi+2m},\\
 g_{2\varphi}&=&\beta_{4m+2} \,\ g_{\varphi-2m}+\beta_{4m} \,\ g_{\varphi-2m+1}+\beta_{4m-2} \,\ g_{\varphi-2m+2}+\beta_{4m-4} \,\ g_{\varphi-2m+3}+\beta_{4m-6}\\&&\times g_{\varphi-2m+4} +\beta_{4m-8} \,\ g_{\varphi-2m+5}+\beta_{4m-10} \,\ g_{\varphi-2m+6}+\ldots+\beta_{10-4m} \,\ g_{\varphi+2m-4}+ \\&&\beta_{8-4m} \,\ g_{\varphi+2m-3}+\beta_{6-4m} \,\ g_{\varphi+2m-2}+\beta_{4-4m} \,\ g_{\varphi+2m-1}+\beta_{2-4m}\,\ g_{\varphi+2m}+\beta_{-4m}\\&&\times g_{\varphi+2m+1}.
 \end{array}\right.
\end{eqnarray}
Now we replace $\varphi$ by $\varphi-m$ in the first and second rules of (\ref{const53}), we get
\begin{eqnarray*}\label{const55}
g_{2\varphi-2m-1}&=& \beta_{-4m} \,\ g_{\varphi-3m-1}+\beta_{2-4m} \,\ g_{\varphi-3m}+\beta_{4-4m} \,\ g_{\varphi-3m+1}+\beta_{6-4m} \,\ g_{\varphi-3m+2}+\beta_{8-4m}\\&&\times g_{\varphi-3m+3}+ \beta_{10-4m} \,\ g_{\varphi-3m+4}+\beta_{12-4m} \,\ g_{\varphi-3m+5}+\beta_{14-4m} \,\ g_{\varphi-3m+6}+\ldots +\\&&\beta_{4m-8} \,\ g_{\varphi+m-5}+\beta_{4m-6}\,\ g_{\varphi+m-4}+\beta_{4m-4} \,\ g_{\varphi+m-3}+\beta_{4m-2} \,\ g_{\varphi+m-2}+\beta_{4m}\times\\&& g_{\varphi+m-1}+\beta_{4m+2}g_{\varphi+m},\\
g_{2\varphi-2m}&=&\beta_{4m+2} \,\ g_{\varphi-3m}+\beta_{4m} \,\ g_{\varphi-3m+1}+\beta_{4m-2} \,\ g_{\varphi-3m+2}+\beta_{4m-4} \,\ g_{\varphi-3m+3}+\beta_{4m-6}\\&&\times g_{\varphi-3m+4} +\beta_{4m-8} \,\ g_{\varphi-3m+5}+\beta_{4m-10} \,\ g_{\varphi-3m+6}+\ldots +\beta_{10-4m} \,\ g_{\varphi+m-4}+\\&& \beta_{8-4m} \,\ g_{\varphi+m-3}+\beta_{6-4m} \,\ g_{\varphi+m-2} +\beta_{4-4m} \,\ g_{\varphi+m-1}+\beta_{2-4m} \,\ g_{\varphi+m}+\beta_{-4m} \\&&\times g_{\varphi+m+1}.
\end{eqnarray*}
Now we replace  $\varphi$ by $\varphi-m+1$ in the first and second rules of (\ref{const53}), we get
\begin{eqnarray*}\label{const57}
g_{2\varphi-2m+1}&=&\beta_{-4m} \,\ g_{\varphi-3m}+\beta_{2-4m} \,\ g_{\varphi-3m+1}+\beta_{4-4m} \,\ g_{\varphi-3m+2}+\beta_{6-4m} \,\ g_{\varphi-3m+3}+\beta_{8-4m}\\&&\times g_{\varphi-3m+4}+\beta_{10-4m} \,\ g_{\varphi-3m+5}+\beta_{12-4m} \,\ g_{\varphi-3m+6}+\ldots +\beta_{4m-8} \,\ g_{\varphi+m-4}+\\&& \beta_{4m-6} g_{\varphi+m-3}+\beta_{4m-4} \,\ g_{\varphi+m-2}+\beta_{4m-2} \,\ g_{\varphi+m-1}+\beta_{4m} \,\ g_{\varphi+m}+\beta_{4m+2}\\&&\times g_{\varphi+m+1},\\
g_{2\varphi-2m+2}&=& \beta_{4m+2} \,\ g_{\varphi-3m+1}+\beta_{4m} \,\ g_{\varphi-3m+2}+\beta_{4m-2} \,\ g_{\varphi-3m+3}+\beta_{4m-4} \,\ g_{\varphi-3m+4}+\beta_{4m-6} \\&&\times g_{\varphi-3m+5} +\beta_{4m-8} \,\ g_{\varphi-3m+6}+\beta_{4m-10} \,\ g_{\varphi-3m+7}+\ldots +\beta_{10-4m} \,\ g_{\varphi+m-3}+\\&& \beta_{8-4m} \,\ g_{\varphi+m-2}+\beta_{6-4m} \,\ g_{\varphi+m-1} +\beta_{4-4m} \,\ g_{\varphi+m}+\beta_{2-4m} \,\ g_{\varphi+m+1}+\beta_{-4m}\\&&\times g_{\varphi+m+2}.
\end{eqnarray*}
Now we replace $\varphi$ by $\varphi-m+2$ in the first and second rules of (\ref{const53}), we get
\begin{eqnarray*}\label{const59}
 g_{2\varphi-2m+3}&=& \beta_{-4m} \,\ g_{\varphi-3m+1}+\beta_{2-4m} \,\ g_{\varphi-3m+2}+\beta_{4-4m} \,\ g_{\varphi-3m+3}+\beta_{6-4m} \,\ g_{\varphi-3m+4}+\\&& \beta_{8-4m}\,\ g_{\varphi-3m+5} +\beta_{10-4m} \,\ g_{\varphi-3m+6}+\beta_{12-4m} \,\ g_{\varphi-3m+7}+\ldots +\beta_{4m-8} \times \\&& g_{\varphi+m-3}+ \beta_{4m-6} \,\ g_{\varphi+m-2}+\beta_{4m-4} \,\ g_{\varphi+m-1} +\beta_{4m-2} \,\ g_{\varphi+m}+\beta_{4m}\,\ g_{\varphi+m+1}\\&&+\beta_{4m+2}\,\ g_{\varphi+m+2},\\
 g_{2\varphi-2m+4}&=& \beta_{4m+2} \,\ g_{\varphi-3m+2}+\beta_{4m} \,\ g_{\varphi-3m+3}+\beta_{4m-2} \,\ g_{\varphi-3m+4}+\beta_{4m-4} \,\ g_{\varphi-3m+5}+\\&&\beta_{4m-6}\,\ g_{\varphi-3m+6} +\beta_{4m-8} \,\ g_{\varphi-3m+7}+\beta_{4m-10} \,\ g_{\varphi-3m+8}+\ldots +\beta_{10-4m}\\&&\times g_{\varphi+m-2}+ \beta_{8-4m} \,\ g_{\varphi+m-1}+\beta_{6-4m} \,\ g_{\varphi+m} +\beta_{4-4m} \,\ g_{\varphi+m+1}+\beta_{2-4m} \\&&\times g_{\varphi+m+2}+\beta_{-4m} \,\ g_{\varphi+m+3}.
\end{eqnarray*}
  \,\ \,\ \,\ \,\ \,\ \,\ \,\ \,\ \,\ \,\ \,\ \,\ \,\ \,\ \,\ \,\ \,\ \,\ \,\ \,\ \,\ \vdots \,\ \,\ \,\ \,\ \,\ \,\ \,\ \,\ \,\ \,\ \,\ \,\ \,\ \,\ \,\ \,\ \,\ \,\ \,\ \,\ \,\ \vdots \,\ \,\ \,\ \,\ \,\ \,\ \,\ \,\ \,\ \,\ \,\ \,\ \,\ \,\ \,\ \,\ \,\ \,\ \,\ \,\ \,\ \vdots \\
continuiting this process, we replace $\varphi$ by $\varphi+m-2$ in first and second rules of (\ref{const53}), we get
\begin{eqnarray*}\label{const63}
g_{2\varphi+2m-5}&=&\beta_{-4m} \,\ g_{\varphi-m-3}+\beta_{2-4m} \,\ g_{\varphi-m-2}+\beta_{4-4m} \,\ g_{\varphi-m-1}+\beta_{6-4m} \,\ g_{\varphi-m}+\beta_{8-4m} \\&&\times g_{\varphi-m+1}+ \beta_{10-4m} \,\ g_{\varphi-m+2}+\beta_{12-4m} \,\ g_{\varphi-m+3}+\ldots +\beta_{4m-8} \,\ g_{\varphi+3m-7}+\\&&\beta_{4m-6} \,\ g_{\varphi+3m-6}+\beta_{4m-4} \,\ g_{\varphi+3m-5}+\beta_{4m-2} \,\ g_{\varphi+3m-4}+\beta_{4m} \,\ g_{\varphi+3m-3}+\\&& \beta_{4m+2}\,\ g_{\varphi+3m-2},\\
g_{2\varphi+2m-4}&=&\beta_{4m+2} \,\ g_{\varphi-m-2}+\beta_{4m} \,\ g_{\varphi-m-1}+\beta_{4m-2} \,\ g_{\varphi-m}+\beta_{4m-4} \,\ g_{\varphi-m+1}+\beta_{4m-6}\times\\&& g_{\varphi-m+2}+ \beta_{4m-8} \,\ g_{\varphi-m+3}+\beta_{4m-10} \,\ g_{\varphi-m+4}+\ldots  +\beta_{10-4m} \,\ g_{\varphi+3m-6}+\beta_{8-4m}\\&& \times g_{\varphi+3m-5}+\beta_{6-4m} \,\ g_{\varphi+3m-4}+\beta_{4-4m} \,\ g_{\varphi+3m-3}+\beta_{2-4m} \,\ g_{\varphi+3m-2}+\beta_{-4m}\\&&\times g_{\varphi+3m-1}.
\end{eqnarray*}
Now we replace  $\varphi$ by $\varphi+m-1$ in the first and second rules of (\ref{const53}), we get
\begin{eqnarray*}\label{const65}
g_{2\varphi+2m-3}&=& \beta_{-4m} \,\ g_{\varphi-m-2}+\beta_{2-4m} \,\ g_{\varphi-m-1}+\beta_{4-4m} \,\ g_{\varphi-m}+\beta_{6-4m} \,\ g_{\varphi-m+1} +\beta_{8-4m}\\&&\times g_{\varphi-m+2}+ \beta_{10-4m} \,\ g_{\varphi-m+3}+\beta_{12-4m} \,\ g_{\varphi-m+4}+\ldots+\beta_{4m-8} \,\ g_{\varphi+3m-6}+\\&&\beta_{4m-6}\,\ g_{\varphi+3m-5}+\beta_{4m-4} \,\ g_{\varphi+3m-4} +\beta_{4m-2} \,\ g_{\varphi+3m-3}+\beta_{4m} \,\ g_{\varphi+3m-2}+\\&& \beta_{4m+2}\,\ g_{\varphi+3m-1},\\
g_{2\varphi+2m-2}&=& \beta_{4m+2} \,\ g_{\varphi-m-1}+\beta_{4m} \,\ g_{\varphi-m}+\beta_{4m-2} \,\ g_{\varphi-m+1}+\beta_{4m-4} \,\ g_{\varphi-m+2}+\beta_{4m-6}\\&&\times g_{\varphi-m+3}+\beta_{4m-8} \,\ g_{\varphi-m+4}+\beta_{4m-10} \,\ g_{\varphi-m+5}+\ldots +\beta_{10-4m} \,\ g_{\varphi+3m-5}+\\&& \beta_{8-4m}\,\ g_{\varphi+3m-4}+\beta_{6-4m} \,\ g_{\varphi+3m-3}+\beta_{4-4m} \,\ g_{\varphi+3m-2}+\beta_{2-4m} \,\ g_{\varphi+3m-1}+\\&& \beta_{-4m}\,\ g_{\varphi+3m}.
\end{eqnarray*}
Now we replace $\varphi$ by $\varphi+m$ in the first and second rules of (\ref{const53}), we get
\begin{eqnarray*}\label{const67}
g_{2\varphi+2m-1}&=&\beta_{-4m} \,\ g_{\varphi-m-1}+\beta_{2-4m} \,\ g_{\varphi-m}+\beta_{4-4m} \,\ g_{\varphi-m+1}+\beta_{6-4m} \,\ g_{\varphi-m+2} +\beta_{8-4m}\\&&\times g_{\varphi-m+3}+ \beta_{10-4m} \,\ g_{\varphi-m+4}+\beta_{12-4m} \,\ g_{\varphi-m+5}+\ldots+\beta_{4m-8} \,\ g_{\varphi+3m-5}+\\&&\beta_{4m-6}\,\ g_{\varphi+3m-4}+\beta_{4m-4} \,\ g_{\varphi+3m-3}+\beta_{4m-2} \,\ g_{\varphi+3m-2}+\beta_{4m} \,\ g_{\varphi+3m-1}+\\&&\beta_{4m+2} \,\ g_{\varphi+3m},\\
g_{2\varphi+2m}&=& \beta_{4m+2} \,\ g_{\varphi-m}+\beta_{4m} \,\ g_{\varphi-m+1}+\beta_{4m-2} \,\ g_{\varphi-m+2}+\beta_{4m-4} \,\ g_{\varphi-m+3}+\beta_{4m-6}\\&&\times g_{\varphi-m+4}+\beta_{4m-8} \,\ g_{\varphi-m+5}+\beta_{4m-10} \,\ g_{\varphi-m+6}+\ldots +\beta_{10-4m} \,\ g_{\varphi+3m-4}+\\&&\beta_{8-4m}\,\ g_{\varphi+3m-3}+\beta_{6-4m} \,\ g_{\varphi+3m-2}+\beta_{4-4m} \,\ g_{\varphi+3m-1}+\beta_{2-4m} \,\ g_{\varphi+3m}+\\&&\beta_{-4m} \,\ g_{\varphi+3m+1}.
\end{eqnarray*}
Now we replace $\varphi$ by $\varphi+m+1$ in the first rule of (\ref{const53}), we get
\begin{eqnarray*}\label{const69}
g_{2\varphi+2m+1}&=& \beta_{-4m} \,\ g_{\varphi-m}+\beta_{2-4m} \,\ g_{\varphi-m+1}+\beta_{4-4m} \,\ g_{\varphi-m+2}+\beta_{6-4m} \,\ g_{\varphi-m+3}+\beta_{8-4m}\\&&\times g_{\varphi-m+4}+ \beta_{10-4m} \,\ g_{\varphi-m+5}+\beta_{12-4m} \,\ g_{\varphi-m+6}+\ldots+\beta_{4m-8} \,\ g_{\varphi+3m-4}+\\&&\beta_{4m-6}\,\ g_{\varphi+3m-3}+\beta_{4m-4} \,\ g_{\varphi+3m-2}+\beta_{4m-2} \,\ g_{\varphi+3m-1}+\beta_{4m} \,\ g_{\varphi+3m}+\beta_{4m+2}\\&&\times g_{\varphi+3m+1}.
\end{eqnarray*}
Now, we get all the unknowns $g_{2\varphi-2m-1}$, $g_{2\varphi-2m}$, $g_{2\varphi-2m+1}$, $\ldots$, $g_{2\varphi+2m}$, $g_{2\varphi+2m+1}$. Further, we substitute these in the set of equations (\ref{const51}). Hence, we get
\begin{eqnarray*}\label{const74}
g_{4\varphi-2}&=&(\beta_{4m+2}\beta_{-4m}) g_{\varphi-3m-1}+(\beta_{4m+2}\beta_{2-4m}+\beta_{4m}\beta_{4m+2}+\beta_{4m-2}\beta_{-4m}) g_{\varphi-3m}+\\&&(\beta_{4m+2}\beta_{4-4m}+\beta_{4m}^{2}+\beta_{4m-2}\beta_{2-4m}+\beta_{4m-4}
\beta_{4m+2}+\beta_{4m-6}\beta_{-4m}) g_{\varphi-3m+1}
 \\&&+(\beta_{4m+2}\beta_{6-4m}+\beta_{4m}\beta_{4m-2}+\beta_{4m-2}\beta_{4-4m}+\beta_{4m-4}
\beta_{4m}+\beta_{4m-6}\beta_{2-4m}+\\&&\beta_{4m-8}\beta_{4m+2}+\beta_{4m-10}\beta_{-4m})g_{\varphi-3m+2}+
(\beta_{4m+2}\beta_{8-4m}+\beta_{4m}\beta_{4m-4}+\beta_{4m-2}\\&&\times \beta_{6-4m}+\beta_{4m-4}
\beta_{4m-2}+\beta_{4m-6}\beta_{4-4m}+\beta_{4m-8}\beta_{4m}+\beta_{4m-10}\beta_{2-4m}+\beta_{4m-12}\\&&\times
\beta_{4m+2}+\beta_{4m-14}\beta_{-4m})g_{\varphi-3m+3}+\ldots+(\beta_{16-4m}\beta_{-4m}+\beta_{14-4m}\beta_{4m+2}
+\\&&\beta_{12-4m}\beta_{2-4m}+
\beta_{10-4m}\beta_{4m}+\beta_{8-4m}\beta_{4-4m}+\beta_{6-4m}\beta_{4m-2}+\beta_{4-4m}\beta_{6-4m}+
\\&&\beta_{2-4m}\beta_{4m-4}+\beta_{-4m}\beta_{8-4m})g_{\varphi+3m-3}+(\beta_{12-4m}\beta_{-4m}+
\beta_{10-4m}\beta_{4m+2}+\beta_{8-4m}\\&&\times \beta_{2-4m}+\beta_{6-4m}\beta_{4m}+\beta_{4-4m}^{2}+
\beta_{2-4m}\beta_{4m-2}+\beta_{-4m}\beta_{6-4m})g_{\varphi+3m-2}+(\beta_{8-4m}\\&&\times \beta_{-4m}+\beta_{6-4m}\beta_{4m+2}+\beta_{4-4m}\beta_{2-4m}+\beta_{2-4m}\beta_{4m}+\beta_{-4m}\beta_{4-4m}) g_{\varphi+3m-1} +\\&&(\beta_{4-4m}\beta_{-4m}+\beta_{2-4m}\beta_{4m+2}+\beta_{-4m}\beta_{2-4m})g_{\varphi+3m} +(\beta_{-4m}^{2}) g_{\varphi+3m+1},
\end{eqnarray*}
\begin{eqnarray*}
g_{4\varphi-1}&=& (\beta_{-4m}^{2}) g_{\varphi-3m-1}+ (\beta_{-4m}\beta_{2-4m}+\beta_{2-4m}\beta_{4m+2}+\beta_{4-4m}\beta_{-4m}) g_{\varphi-3m}+(\beta_{-4m}\beta_{4-4m}\\&&+\beta_{2-4m}
\beta_{4m}+\beta_{4-4m}\beta_{2-4m}+\beta_{6-4m}\beta_{4m+2}+\beta_{8-4m}\beta_{-4m}) g_{\varphi-3m+1}
+(\beta_{-4m}\beta_{6-4m}+\\&&\beta_{2-4m}\beta_{4m-2}+\beta_{4-4m}^{2}+\beta_{6-4m}\beta_{4m}
+\beta_{8-4m}\beta_{2-4m}+\beta_{10-4m}\beta_{4m+2}+\beta_{12-4m}\beta_{-4m})\times\\&&g_{\varphi-3m+2}+(\beta_{-4m}
\beta_{8-4m}+\beta_{2-4m}\beta_{4m-4}+\beta_{4-4m}\beta_{6-4m}+\beta_{6-4m}\beta_{4m-2}
+\beta_{8-4m}\beta_{4-4m}\\&&+\beta_{10-4m}\beta_{4m}+\beta_{12-4m}\beta_{2-4m}+\beta_{14-4m}\beta_{4m+2}
+\beta_{16-4m}\beta_{-4m})g_{\varphi-3m+3}
+\ldots+
(\beta_{4m-14}\\&&\times \beta_{-4m}+\beta_{4m-12}\beta_{4m+2}+
\beta_{4m-10}\beta_{2-4m}+\beta_{4m-8}\beta_{4m}+\beta_{4m-6}\beta_{4-4m}+\beta_{4m-4}\beta_{4m-2}+\\&&\beta_{4m-2}
\beta_{6-4m}+\beta_{4m}\beta_{4m-4}+\beta_{4m+2}\beta_{8-4m})g_{\varphi+3m-3}+(\beta_{4m-10}\beta_{-4m}+\beta_{4m-8}
\beta_{4m+2}+\\&&\beta_{4m-6}\beta_{2-4m}+\beta_{4m-4}\beta_{4m}+\beta_{4m-2}\beta_{4-4m}+\beta_{4m}\beta_{4m-2}+
\beta_{4m+2}\beta_{6-4m})g_{\varphi+3m-2}+\\&&(\beta_{4m-6}\beta_{-4m}+\beta_{4m-4}\beta_{4m+2}+\beta_{4m-2}
\beta_{2-4m}+\beta_{4m}^{2}+\beta_{4m+2}\beta_{4-4m})g_{\varphi+3m-1}+\\&&(\beta_{4m-2}\beta_{-4m}+\beta_{4m}
\beta_{4m+2}+\beta_{4m+2}\beta_{2-4m})g_{\varphi-3m}+ (\beta_{4m+2}\beta_{-4m}) g_{\varphi-3m+1},
\end{eqnarray*}
\begin{eqnarray*}
g_{4\varphi}&=& (\beta_{4m+2}^{2}+\beta_{4m}\beta_{-4m}) g_{\varphi-3m}+ (\beta_{4m+2}\beta_{4m}+\beta_{4m}\beta_{2-4m}+\beta_{4m-2}\beta_{4m+2}+\beta_{4m-4}\beta_{-4m}) \\&&\times g_{\varphi-3m+1}+(\beta_{4m+2}\beta_{4m-2}+\beta_{4m}\beta_{4-4m}+\beta_{4m-2}\beta_{4m}+\beta_{4m-4}
\beta_{2-4m}+\beta_{4m-6}\beta_{4m+2}\\&&+\beta_{4m-8}\beta_{-4m})g_{\varphi-3m+2}+(\beta_{4m+2}\beta_{4m-4}+
\beta_{4m}\beta_{6-4m}+\beta_{4m-2}^{2}+\beta_{4m-4}\beta_{4-4m}+\beta_{4m-6}\\&&\times\beta_{4m}+\beta_{4m-8}
\beta_{2-4m}+\beta_{4m-10}\beta_{4m+2}+\beta_{4m-12}\beta_{-4m})g_{\varphi-3m+3}
+\ldots+
(\beta_{18-4m}\beta_{-4m}\\&&+\beta_{16-4m}\beta_{4m+2}+\beta_{14-4m}\beta_{2-4m}+\beta_{12-4m}
\beta_{4m}+\beta_{10-4m}\beta_{4-4m}+\beta_{8-4m}\beta_{4m-2}+\beta_{6-4m}^{2}\\&&+\beta_{4-4m}\beta_{4m-4}
+\beta_{2-4m}\beta_{8-4m}+\beta_{-4m}\beta_{4m-6})g_{\varphi+3m-3}+(\beta_{14-4m}\beta_{-4m}+\beta_{12-4m}
\beta_{4m+2}\\&&+\beta_{10-4m}\beta_{2-4m}+\beta_{8-4m}\beta_{4m}+\beta_{6-4m}\beta_{4-4m}+\beta_{4-4m}\beta_{4m-2}
+\beta_{2-4m}\beta_{6-4m}+\beta_{-4m}\times\\&&\beta_{4m-4}) g_{\varphi+3m-2}(\beta_{10-4m}\beta_{-4m} +\beta_{8-4m}\beta_{4m+2}+\beta_{6-4m}\beta_{2-4m}+\beta_{4-4m}\beta_{4m}+\beta_{2-4m}\beta_{4-4m}\\&& +\beta_{-4m}\beta_{4m-2})g_{\varphi+3m-1}+(\beta_{6-4m}\beta_{-4m}+\beta_{4-4m}\beta_{4m+2}+\beta_{2-4m}^{2}+
\beta_{-4m}\beta_{4m})g_{\varphi+3m}+\\&&(\beta_{2-4m}\beta_{-4m}+\beta_{-4m}\beta_{4m+2}) g_{\varphi+3m+1}
\end{eqnarray*}
and
\begin{eqnarray*}
 g_{4\varphi+1}&=&(\beta_{-4m}\beta_{4m+2}+\beta_{2-4m}\beta_{-4m}) g_{\varphi-3m}+(\beta_{-4m}\beta_{4m}+\beta_{2-4m}^{2}+\beta_{4-4m}\beta_{4m+2}+\beta_{6-4m}\\&&\times\beta_{-4m})
 g_{\varphi-3m+1}+(\beta_{-4m}\beta_{4m-2}+\beta_{2-4m}\beta_{4-4m}+\beta_{4-4m}\beta_{4m}+\beta_{6-4m}
 \beta_{2-4m}+\\&&\beta_{8-4m}\beta_{4m+2}+\beta_{10-4m}\beta_{-4m})g_{\varphi-3m+2}+(\beta_{-4m}\beta_{4m-4}
 +\beta_{2-4m}\beta_{6-4m}+\beta_{4-4m}\times\\&&\beta_{4m-2}+\beta_{6-4m}\beta_{4-4m}+\beta_{8-4m}\beta_{4m}+
\beta_{10-4m}\beta_{2-4m}+\beta_{12-4m}\beta_{4m+2}+\beta_{14-4m}\times\\&&\beta_{-4m})g_{\varphi-3m+3}
+\ldots+
(\beta_{4m-16}\beta_{-4m}+\beta_{4m-14}\beta_{4m+2}
+\beta_{4m-12}\beta_{2-4m}+\beta_{4m-10}\beta_{4m}\\&&+\beta_{4m-8}\beta_{4-4m}+\beta_{4m-6}\beta_{4m-2}+\beta_{4m-4}
\beta_{6-4m}+\beta_{4m-2}\beta_{4m-4}+\beta_{4m}\beta_{8-4m}+\beta_{4m+2}\\&&\times \beta_{4m-6})g_{\varphi+3m-3}+(\beta_{4m-12}\beta_{-4m}+\beta_{4m-10}\beta_{4m+2}+\beta_{4m-8}\beta_{2-4m}+
\beta_{4m-6}\beta_{4m}+\\&&\beta_{4m-4}
\beta_{4-4m}+\beta_{4m-2}^{2}+\beta_{4m} \beta_{6-4m}+\beta_{4m+2}\beta_{4m-4})g_{\varphi+3m-2}+
(\beta_{4m-8}\beta_{-4m}+\beta_{4m-6}\\&&\times \beta_{4m+2}+\beta_{4m-4}\beta_{2-4m}+\beta_{4m-2}\beta_{4m}+ \beta_{4m} \beta_{4-4m}+\beta_{4m+2}\beta_{4m-2})g_{\varphi+3m-1}+ (\beta_{4m-4}\\&&\times \beta_{-4m}+\beta_{4m-2}\beta_{4m+2}+\beta_{4m}\beta_{2-4m}+\beta_{4m+2}\beta_{4m}) g_{\varphi+3m}+(\beta_{4m}\beta_{-4m}+\beta_{4m+2}^{2}) g_{\varphi+3m+1}.
\end{eqnarray*}
Which, in short, can be written as
\begin{eqnarray}\label{const75}
\left\{\begin{array}{cccc}
g_{4\varphi-2}&=& \sum\limits_{\lambda=-m}^{m}\beta_{2-4\lambda}\left(\sum\limits_{\alpha=-2m}^{2m+1}\beta_{2\alpha} \,\ g_{\varphi+\alpha+\lambda-1}\right)+\sum\limits_{\lambda=-m}^{m}\beta_{-4\lambda}
\left(\sum\limits_{\alpha=-2m}^{2m+1}\beta_{2-2\alpha} \,\ g_{\varphi+\alpha+\lambda}\right),\\ \\
g_{4\varphi-1}&=& \sum\limits_{\lambda=-m}^{m}\beta_{4\lambda}\left(\sum\limits_{\alpha=-2m}^{2m+1}\beta_{2\alpha} \,\ g_{\varphi+\alpha+\lambda-1}\right)+\sum\limits_{\lambda=-m}^{m}\beta_{2+4\lambda}
\left(\sum\limits_{\alpha=-2m}^{2m+1}\beta_{2-2\alpha} \,\ g_{\varphi+\alpha+\lambda}\right),\\ \\
g_{4\varphi}&=& \sum\limits_{\lambda=-m}^{m}\beta_{2-4\lambda}\left(\sum\limits_{\alpha=-2m}^{2m+1}\beta_{2-2\alpha} \,\ g_{\varphi+\alpha+\lambda}\right)+\sum\limits_{\lambda=-m}^{m}\beta_{-4\lambda}
\left(\sum\limits_{\alpha=-2m}^{2m+1}\beta_{2\alpha} \,\ g_{\varphi+\alpha+\lambda}\right),\\ \\
g_{4\varphi+1}&=& \sum\limits_{\lambda=-m}^{m}\beta_{4\lambda}\left(\sum\limits_{\alpha=-2m}^{2m+1}\beta_{2-2\alpha} \,\ g_{\varphi+\alpha+\lambda}\right)+\sum\limits_{\lambda=-m}^{m}\beta_{2+4\lambda}
\left(\sum\limits_{\alpha=-2m}^{2m+1}\beta_{2\alpha} \,\ g_{\varphi+\alpha+\lambda}\right).
\end{array}\right.
\end{eqnarray}
Which completes the required result.
\end{proof}
The following theorem derives a relation between the $(4m+2)$-point binary and the $(6m+2)$-point relaxed quaternary subdivision schemes.
\begin{thm}\label{thm-odd-2}
	If $n=2m+1$, then the subdivision equations given in (\ref{const75}) gives the four subdivision rules of the $(6m+2)$-point relaxed quaternary subdivision scheme whose  coefficients of the control points in the subdivision rules are the non-linear combination of the coefficients of the control points of the $(4m+2)$-point binary subdivision scheme.
\end{thm}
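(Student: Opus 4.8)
The plan is to follow the same two-line recipe used for the even case in Theorem~\ref{thm-even-2}, now taking Theorem~\ref{thm-odd-1} as the computational backbone. First I would take the four equations already cast in the nested-sum form (\ref{const75}) and restore the subdivision level that Lemma~\ref{lem1} had suppressed: each control point on the left-hand side receives the superscript $k+1$ and each control point on the right-hand side the superscript $k$. This is legitimate because (\ref{const75}) was obtained purely by re-indexing the level-free rules (\ref{aa2}), so re-attaching the levels leaves the algebra untouched.

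Next I would collapse each outer sum over $\lambda$ together with its inner sum over $\alpha$ into a single double sum. For the first rule this reads
\[
g_{4\varphi-2}^{k+1}=\sum_{\lambda=-m}^{m}\sum_{\alpha=-2m}^{2m+1}\beta_{2-4\lambda}\beta_{2\alpha}\,g_{\varphi+\alpha+\lambda-1}^{k}+\sum_{\lambda=-m}^{m}\sum_{\alpha=-2m}^{2m+1}\beta_{-4\lambda}\beta_{2-2\alpha}\,g_{\varphi+\alpha+\lambda}^{k},
\]
and the rules for $g_{4\varphi-1}^{k+1}$, $g_{4\varphi}^{k+1}$, $g_{4\varphi+1}^{k+1}$ are treated identically. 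Reading off the total coefficient that multiplies a fixed shift of $g^{k}$ then shows that each such coefficient is a sum of products of two entries of the binary mask, i.e.\ a quadratic (hence non-linear) expression in the $\beta_{j}$; this is precisely the assertion of the theorem, and it agrees termwise with the explicit coefficients displayed immediately before (\ref{const75}).

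It then remains to pin down the width of the resulting scheme and the binary scheme it rests on. Since $\alpha$ ranges over the $4m+2$ values $-2m,\ldots,2m+1$ and $\lambda$ over the $2m+1$ values $-m,\ldots,m$, the composite shifts $\alpha+\lambda-1$ and $\alpha+\lambda$ sweep the control points $g_{\varphi-3m-1}^{k},\ldots,g_{\varphi+3m+1}^{k}$ in the rules $g_{4\varphi-2}^{k+1},g_{4\varphi-1}^{k+1}$ and $g_{\varphi-3m}^{k},\ldots,g_{\varphi+3m+1}^{k}$ in the rules $g_{4\varphi}^{k+1},g_{4\varphi+1}^{k+1}$, exactly the supports counted in Theorem~\ref{thm-odd-1}. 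Taking the common width $6m+2$ as the nominal size, these four equations constitute a $(6m+2)$-point relaxed quaternary scheme, while the mask entries feeding them are those of (\ref{const1}) specialised to $n=2m+1$, that is, the $(4m+2)$-point binary scheme.

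As for difficulty, I expect essentially none: all the delicate re-indexing was already discharged in Theorem~\ref{thm-odd-1}, so the present argument is a transcription step plus the single observation that composing two linear binary passes produces quadratic coefficients. The only point that warrants a word of care is confirming that no cancellation shrinks the claimed support, i.e.\ that the extreme coefficients are genuinely present in form; this is immediate from the boundary terms already written out, such as the leading $\beta_{-4m}^{2}$ on $g_{\varphi-3m-1}$ in the rule for $g_{4\varphi-1}$ and the trailing $\beta_{4m+2}^{2}$ on $g_{\varphi+3m+1}$ in the rule for $g_{4\varphi+1}$.
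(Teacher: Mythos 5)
Your proposal is correct and takes essentially the same route as the paper: the paper's proof likewise consists of re-attaching the subdivision levels to (\ref{const75}) to obtain the scheme (\ref{const75a}) and observing that its mask entries are quadratic (non-linear) expressions in the mask of the $(4m+2)$-point binary scheme (\ref{const1a}) obtained from (\ref{const1}) with $n=2m+1$. Your extra bookkeeping --- verifying that the shifts $\alpha+\lambda-1$ and $\alpha+\lambda$ reproduce the supports of Theorem~\ref{thm-odd-1} and that boundary coefficients such as $\beta_{-4m}^{2}$ are genuinely present --- goes slightly beyond what the paper records but is entirely consistent with it.
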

\begin{proof}
	Now we add the subdivision level on the subdivision rules given in (\ref{const75}), Hence we get the following $(6m+2)$-point relaxed quaternary subdivision scheme
\begin{eqnarray}\label{const75a}
\left\{\begin{array}{cccc}
g_{4\varphi-2}^{k+1}&=& \sum\limits_{\lambda=-m}^{m}\sum\limits_{\alpha=-2m}^{2m+1}\beta_{2-4\lambda}\beta_{2\alpha} \,\ g_{\varphi+\alpha+\lambda-1}^{k}+\sum\limits_{\lambda=-m}^{m}\sum\limits_{\alpha=-2m}^{2m+1}\beta_{-4\lambda}
\beta_{2-2\alpha} \,\ g_{\varphi+\alpha+\lambda}^{k},\\ \\

g_{4\varphi-1}^{k+1}&=& \sum\limits_{\lambda=-m}^{m}\sum\limits_{\alpha=-2m}^{2m+1}\beta_{4\lambda}\beta_{2\alpha} \,\ g_{\varphi+\alpha+\lambda-1}^{k}+\sum\limits_{\lambda=-m}^{m}\sum\limits_{\alpha=-2m}^{2m+1}\beta_{2+4\lambda}
\beta_{2-2\alpha} \,\ g_{\varphi+\alpha+\lambda}^{k},\\ \\

g_{4\varphi}^{k+1}&=& \sum\limits_{\lambda=-m}^{m}\sum\limits_{\alpha=-2m}^{2m+1}\beta_{2-4\lambda}\beta_{2-2\alpha} \,\ g_{\varphi+\alpha+\lambda}^{k}+\sum\limits_{\lambda=-m}^{m}\sum\limits_{\alpha=-2m}^{2m+1}\beta_{-4\lambda}
\beta_{2\alpha} \,\ g_{\varphi+\alpha+\lambda}^{k},\\ \\

g_{4\varphi+1}^{k+1}&=& \sum\limits_{\lambda=-m}^{m}\sum\limits_{\alpha=-2m}^{2m+1}\beta_{4\lambda}\beta_{2-2\alpha} \,\ g_{\varphi+\alpha+\lambda}^{k}+\sum\limits_{\lambda=-m}^{m}\sum\limits_{\alpha=-2m}^{2m+1}\beta_{2+4\lambda}
\beta_{2\alpha} \,\ g_{\varphi+\alpha+\lambda}^{k}.
\end{array}\right.
\end{eqnarray}
The mask coefficients of quaternary subdivision scheme (\ref{const75a}) is the non-linear combination of the mask of the following $(4m+2)$-point binary subdivision scheme which we get by using $n=2m+1$ in (\ref{const1}).
\begin{equation}\label{const1a}
\left\{\begin{array}{c}
{g}_{2\varphi-1}^{k+1}=\sum\limits_{\lambda=-2m}^{2m+1}\beta_{2\lambda}\,\ g_{\varphi+\lambda-1}^{k},\\ \\
{g}_{2\varphi}^{k+1}=\sum\limits_{\lambda=-2m}^{2m+1}\beta_{2-2\lambda} \,\ g_{\varphi+\lambda}^{k},
\end{array}\right.
\end{equation}
Hence proved.
\end{proof}
In the next section, we will validate the results of Theorem \ref{thm-even-2} and Theorem \ref{thm-odd-2}.
\section{Applications of the presented techniques}
In this section, we implement and validate the results, which are proved in Theorem \ref{thm-even-2} and Theorem \ref{thm-odd-2} of Section 2,  to the known even-point  binary approximating subdivision schemes. We also inspects the graphical results of both type of schemes using the same initial data.We use non-parametric binary subdivision schemes in the Corollaries \ref{cor-1}-\ref{cor-7}, but the  given procedure can be applied on all the parametric as well as the non-parametric linear even-point binary subdivision schemes. The first three corollaries are the applications of Theorem \ref{thm-even-2} whereas the next four are the applications of Theorem \ref{thm-odd-2}.
\begin{cor}\label{cor-1}
We expand the binary subdivision scheme which is defined in (\ref{1b}) for $m=1$, thus we get
\begin{eqnarray}\label{constaa1}
\left\{\begin{array}{cccc}
{g}_{2\varphi-1}^{k+1}&=&\beta_{-2}\,\ g_{\varphi-2}^{k}+\beta_{0}\,\ g_{\varphi-1}^{k}+\beta_{2}\,\ g_{\varphi}^{k}+\beta_{4}\,\ g_{\varphi+1}^{k},\\
{g}_{2\varphi}^{k+1}&=&\beta_{4}\,\ g_{\varphi-1}^{k}+\beta_{2}\,\ g_{\varphi}^{k}+\beta_{0}\,\ g_{\varphi+1}^{k}+\beta_{-2}\,\ g_{\varphi+2}^{k}.
\end{array}\right.
\end{eqnarray}
To get  the values of $\beta_{-2}, \beta_{0}, \beta_{2}, \beta_{4}$, we compare the general form of the $4$-point binary scheme  (\ref{constaa1}) with the $4$-point scheme defined by \cite{Siddiqi2}, hence we get
\begin{eqnarray}\label{a41}
\beta_{-2}={\frac {1}{384}},\,\ \,\ \beta_{0}={\frac {121}{384}},\,\ \,\ \beta_{2}={\frac {235}{384}}, \,\ \,\ \beta_{4}={\frac {9}{128}}.
\end{eqnarray}
Now by using the mask  (\ref{a41}), we can get the mask/coefficients of the quaternary subdivision scheme. Hence by expanding (\ref{1q}) for $m=1$, we get
\begin{eqnarray*}
{g}_{4\varphi-2}^{k+1}&=&(\beta_{4}^{2}+\beta_{2}\beta_{-2})g_{\varphi-2}^{k}+(\beta_{4}\beta_{2}+\beta_{0}\beta_{4}+
\beta_{2}\beta_{0}+\beta_{-2}^{2})g_{\varphi-1}^{k}+(\beta_{4}\beta_{0}+\beta_{0}\beta_{2}+\beta_{2}^{2}+\\&& \beta_{-2}\beta_{0})
g_{\varphi}^{k}+(\beta_{4}\beta_{-2}+\beta_{0}^{2}+\beta_{2}\beta_{4}+\beta_{-2}\beta_{2})g_{\varphi+1}^{k}+(\beta_{0}\beta_{-2}
+\beta_{-2}\beta_{4})g_{\varphi+2}^{k},\\
{g}_{4\varphi-1}^{k+1}&=&(\beta_{-2}\beta_{4}+\beta_{0}\beta_{-2})g_{\varphi-2}^{k}+(\beta_{-2}\beta_{2}+\beta_{2}\beta_{4}+
\beta_{0}^{2}+\beta_{4}\beta_{-2})g_{\varphi-1}^{k}+(\beta_{-2}\beta_{0}+\beta_{2}^{2}\\&&+\beta_{0}\beta_{2}+\beta_{4}\beta_{0})
g_{\varphi}^{k}+(\beta_{-2}^{2}+\beta_{2}\beta_{0}+\beta_{0}\beta_{4}+\beta_{4}\beta_{2})g_{\varphi+1}^{k}+(\beta_{2}\beta_{-2}
+\beta_{4}^{2})g_{\varphi+2}^{k},\\
{g}_{4\varphi}^{k+1}&=&(\beta_{4}\beta_{-2})g_{\varphi-2}^{k}+(\beta_{4}\beta_{0}+\beta_{0}\beta_{-2}+\beta_{2}\beta_{4})
g_{\varphi-1}^{k}+(\beta_{4}\beta_{2}+\beta_{0}^{2}+\beta_{2}^{2}+\beta_{-2}\beta_{4})g_{\varphi}^{k}+\\&&(\beta_{4}^{2}+
\beta_{0}\beta_{2}+\beta_{2}\beta_{0}+\beta_{-2}\beta_{2})g_{\varphi+1}^{k}+(\beta_{0}\beta_{4}+\beta_{2}\beta_{-2}+
\beta_{-2}\beta_{0})g_{\varphi+2}^{k}+(\beta_{-2}^{2})g_{\varphi+3}^{k},\\
{g}_{4\varphi+1}^{k+1}&=&(\beta_{-2}^{2})g_{\varphi-2}^{k}+(\beta_{-2}\beta_{0}+\beta_{2}\beta_{-2}+
\beta_{0}\beta_{4})g_{\varphi-1}^{k}+(\beta_{-2}\beta_{2}+\beta_{2}\beta_{0}+\beta_{0}\beta_{2}+\beta_{4}^{2})
g_{\varphi}^{k}\\&&+(\beta_{-2}\beta_{4}+\beta_{2}^{2}+\beta_{0}^{2}+\beta_{4}\beta_{2})g_{\varphi+1}^{k}+(\beta_{2}\beta_{4}
+\beta_{0}\beta_{-2}+\beta_{4}\beta_{0})g_{\varphi+2}^{k}+(\beta_{4}\beta_{-2})g_{\varphi+3}^{k}.
\end{eqnarray*}
By using the values of $\beta_{-2}, \beta_{0}, \beta_{2}$ and $\beta_{4}$ from (\ref{a41}) in above, we get
\begin{eqnarray}\label{constaa2}
\left\{\begin{array}{cccc}
{g}_{4\varphi-2}^{k+1}&=&\hat{A}_{1}g_{\varphi-2}^{k}+\hat{A}_{2}g_{\varphi-1}^{k}+\hat{A}_{3}g_{\varphi}^{k}+
\hat{A}_{4}g_{\varphi+1}^{k}+\hat{A}_{5}g_{\varphi+2}^{k},\\ \\
{g}_{4\varphi-1}^{k+1}&=&\hat{A}_{5}g_{\varphi-2}^{k}+\hat{A}_{4}g_{\varphi-1}^{k}+\hat{A}_{3}g_{\varphi}^{k}
+\hat{A}_{2}g_{\varphi+1}^{k}+\hat{A}_{1}g_{\varphi+2}^{k},\\ \\
{g}_{4\varphi}^{k+1}&=&\hat{B}_{1}g_{\varphi-2}^{k}+\hat{B}_{2}g_{\varphi-1}^{k}+\hat{B}_{3}g_{\varphi}^{k}
+\hat{B}_{4}g_{\varphi+1}^{k}+\hat{B}_{5}g_{\varphi+2}^{k}+\hat{B}_{6}g_{\varphi+3}^{k},\\ \\
{g}_{4\varphi+1}^{k+1}&=&\hat{B}_{6}g_{\varphi-2}^{k}+\hat{B}_{5}g_{\varphi-1}^{k}+\hat{B}_{4}g_{\varphi}^{k}
+\hat{B}_{3}g_{\varphi+1}^{k}+\hat{B}_{2}g_{\varphi+2}^{k}+\hat{B}_{1}g_{\varphi+3}^{k},
\end{array}\right.
\end{eqnarray}
where
\begin{eqnarray}\label{a42}
\left\{\begin{array}{cccc}
 \hat{A}_{1}&=&{\frac{241}{36864}}, \,\ \,\ \hat{A}_{2}={\frac{1189}{4608}}, \,\ \,\ \hat{A}_{3}={\frac{1209}{2048}},\,\ \,\ \hat{A}_{4}={\frac{83}{576}}, \,\ \,\ \hat{A}_{5}={\frac{37}{36864}},\,\ \,\ \hat{B}_{1}={\frac{3}{16384}}, \\ \\ \hat{B}_{2}&=&{\frac{9733}{147456}},\,\ \,\ \hat{B}_{3}={\frac{38119}{73728}}, \,\ \,\ \hat{B}_{4}={\frac{3213}{8192}},\,\ \,\ \hat{B}_{5}={\frac{3623}{147456}}, \,\ \,\ \hat{B}_{6}={\frac{1}{147456}}
 \end{array}\right.
\end{eqnarray}
Which is the $5$-point relaxed quaternary subdivision scheme. The mask/coefficients of this quaternary subdivision scheme (\ref{constaa2}) is just the  non-linear combination of the mask of the binary subdivision scheme  (\ref{constaa1}). 
\end{cor}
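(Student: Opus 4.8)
The plan is to treat this corollary as a direct specialization of Theorem \ref{thm-even-2} at $m=1$, so that everything reduces to expanding two finite double sums and then carrying out the arithmetic. First I would set $m=1$ in the general $4m$-point binary scheme (\ref{1b}): the index range $\lambda=-2m+1,\dots,2m$ collapses to $\lambda=-1,\dots,2$, and reading off the four surviving mask entries $\beta_{-2},\beta_0,\beta_2,\beta_4$ reproduces exactly the pair of rules (\ref{constaa1}). Matching these four entries against the published $4$-point scheme of \cite{Siddiqi2} then fixes the numerical values (\ref{a41}); this step is merely an identification of coefficients, not a computation.

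The substantive step is to specialize the quaternary formula (\ref{1q}) to $m=1$. Here the outer and inner sums become $\lambda\in\{0,1\}$ and $\alpha\in\{-2,-1,0,1\}$, so each of the four rules is a sum of eight $\beta$-products. I would expand these sums, recording for each $(\lambda,\alpha)$ pair the shifted point $g_{\varphi+\alpha+\lambda}$ together with its product $\beta_{4-4\lambda}\beta_{-2\alpha}$ or $\beta_{2-4\lambda}\beta_{2+2\alpha}$ (and analogously for the other three rules), and then collect all contributions that land on the same control point. Because distinct $(\lambda,\alpha)$ pairs can map to the same index, several $\beta$-products accumulate on each $g$, producing precisely the bracketed nonlinear combinations displayed just before (\ref{constaa2}). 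This bookkeeping is where the bulk of the work lives and is the only place an error could realistically enter; I would organize it in a small table keyed by the value of $\varphi+\alpha+\lambda$ to keep the overlapping terms straight.

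Having assembled the four rules symbolically, I would substitute the fractions (\ref{a41}) into the brackets and simplify to reach the closed coefficients $\hat A_i,\hat B_i$ of (\ref{a42}), confirming the $5$-point relaxed quaternary scheme (\ref{constaa2}). Two cheap consistency checks guard the arithmetic. First, a reflection symmetry of the mask: the coefficient list of $g_{4\varphi-1}$ should be that of $g_{4\varphi-2}$ read backwards, and likewise $g_{4\varphi+1}$ versus $g_{4\varphi}$, which is already visible from the way $\lambda$ and $\alpha$ enter (\ref{1q}). Second, convergence is inherited for free: since the binary rules satisfy $\sum_\lambda\beta_{2\lambda}=\sum_\lambda\beta_{2-2\lambda}=1$, each factored inner sum such as $\sum_{\alpha}\beta_{-2\alpha}$ equals $1$, and the coefficients of each quaternary rule therefore sum to $(\beta_4+\beta_0)+(\beta_2+\beta_{-2})=1$. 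I expect the expansion-and-collection step to be the main obstacle, with the final fraction arithmetic being routine once the symbolic coefficients are correctly grouped.
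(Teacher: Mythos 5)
Your proposal follows the paper's own route exactly: specialize (\ref{1b}) and (\ref{1q}) to $m=1$, expand the double sums over $\lambda\in\{0,1\}$ and $\alpha\in\{-2,-1,0,1\}$, collect the $\beta$-products landing on each control point $g_{\varphi+\alpha+\lambda}$, and substitute the mask of \cite{Siddiqi2} to reach (\ref{constaa2})--(\ref{a42}); your reflection-symmetry and partition-of-unity checks are correct supplements (each inner sum is $1$ and the outer factors sum to $(\beta_{4}+\beta_{0})+(\beta_{2}+\beta_{-2})=1$), and the coefficients do verify, e.g.\ $\hat{A}_{1}=\beta_{4}^{2}+\beta_{2}\beta_{-2}=\frac{729+235}{147456}=\frac{241}{36864}$. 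The one cosmetic slip is the count: each quaternary rule is a sum of sixteen $\beta$-products (eight from each of its two double sums), not eight, which is consistent with the grouped totals $2+4+4+4+2$ in the displayed expansion but does not affect the argument.
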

\begin{figure}[h]
\begin{tabular}{cccc}
\epsfig{file=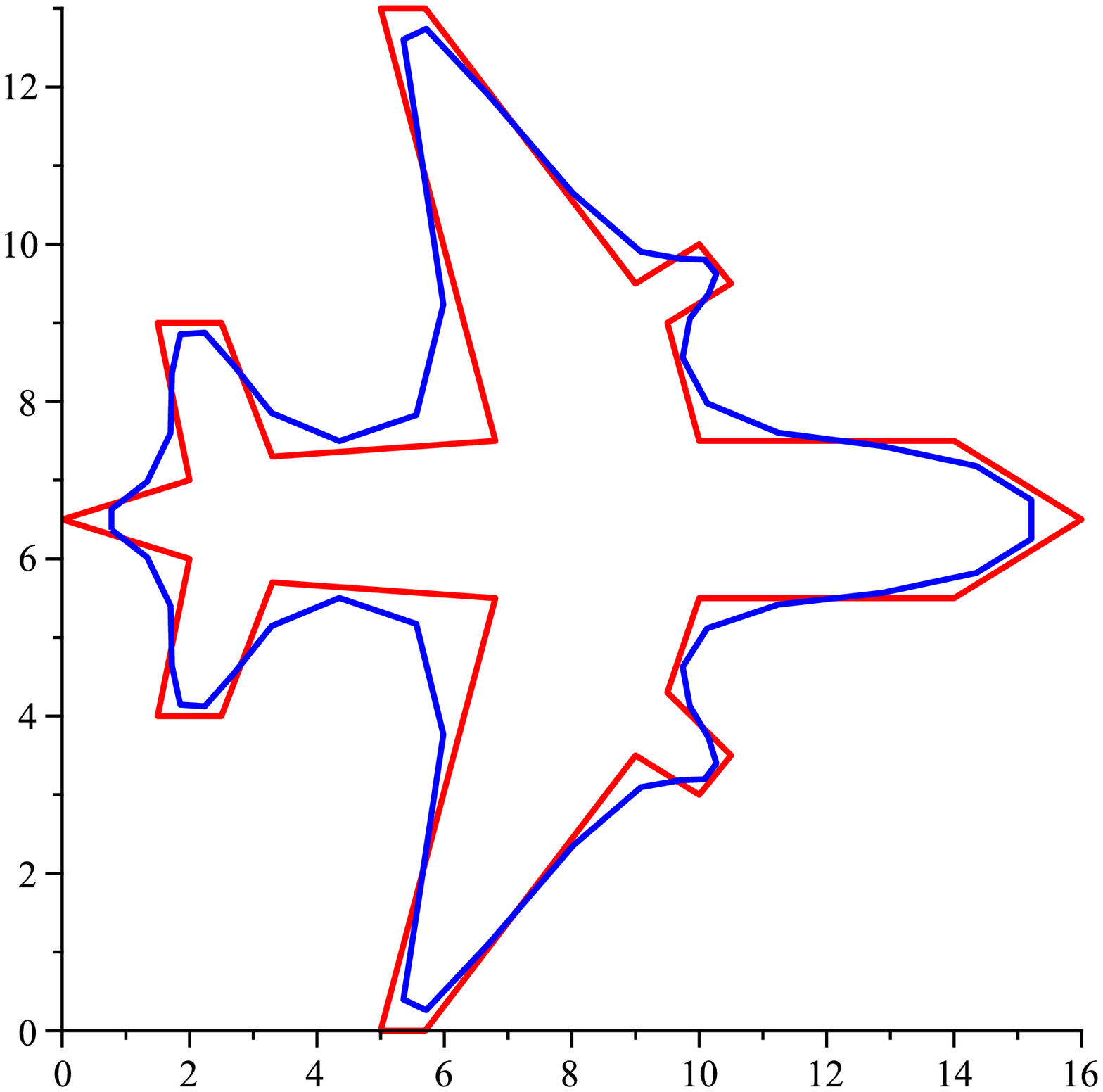, width=1.5 in} & \epsfig{file=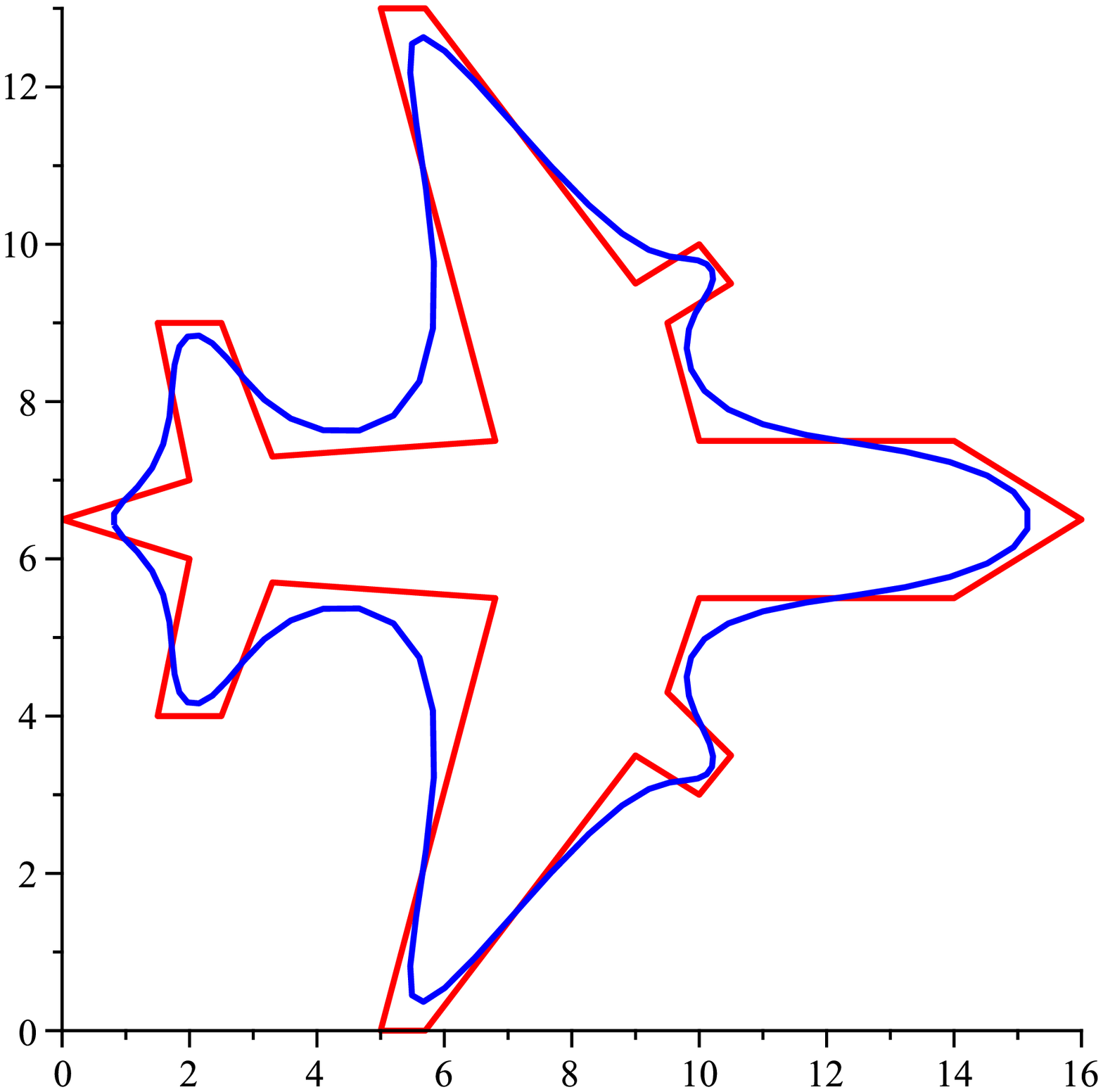, width=1.5 in} & \epsfig{file=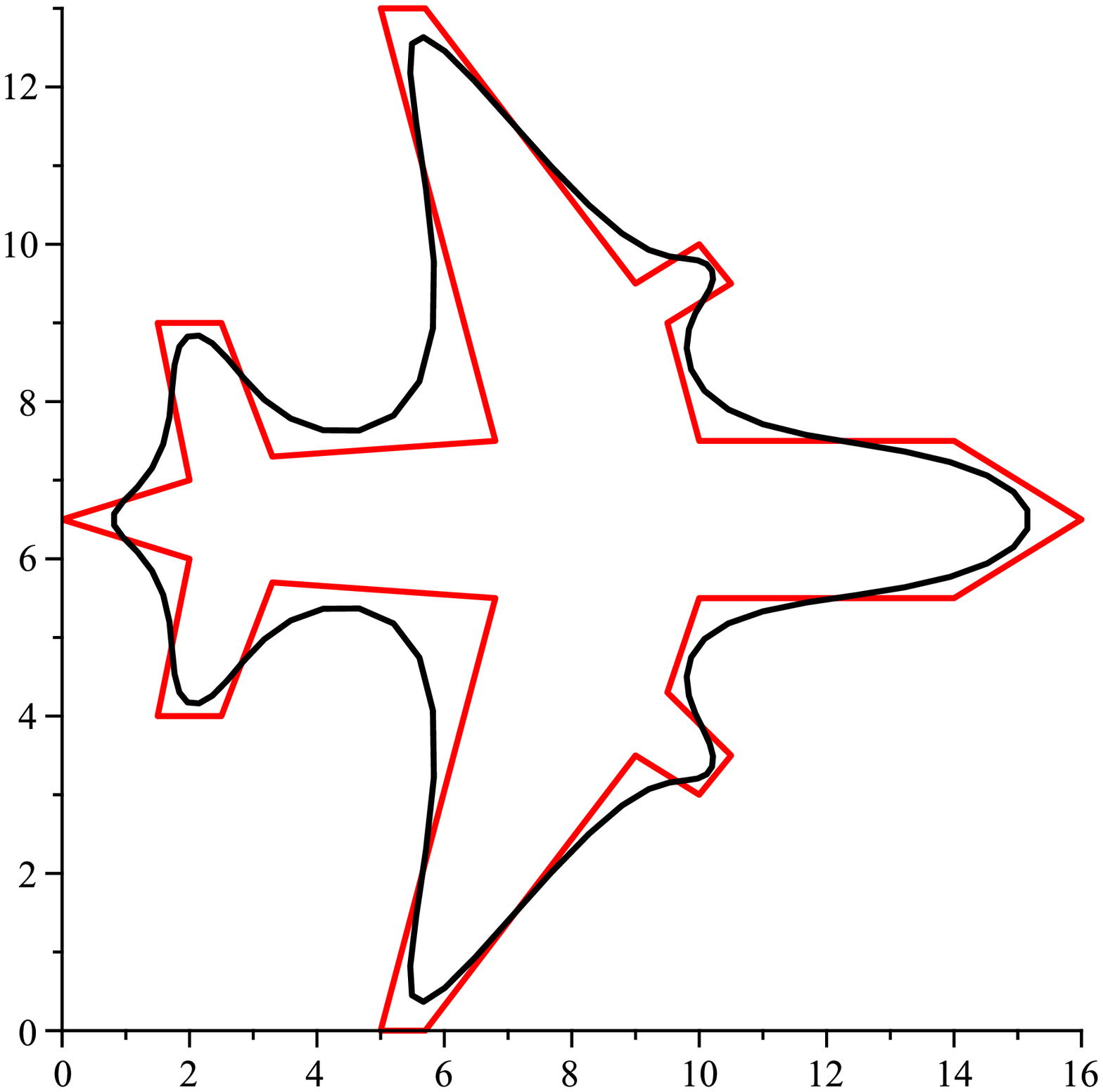, width=1.5 in} & \epsfig{file=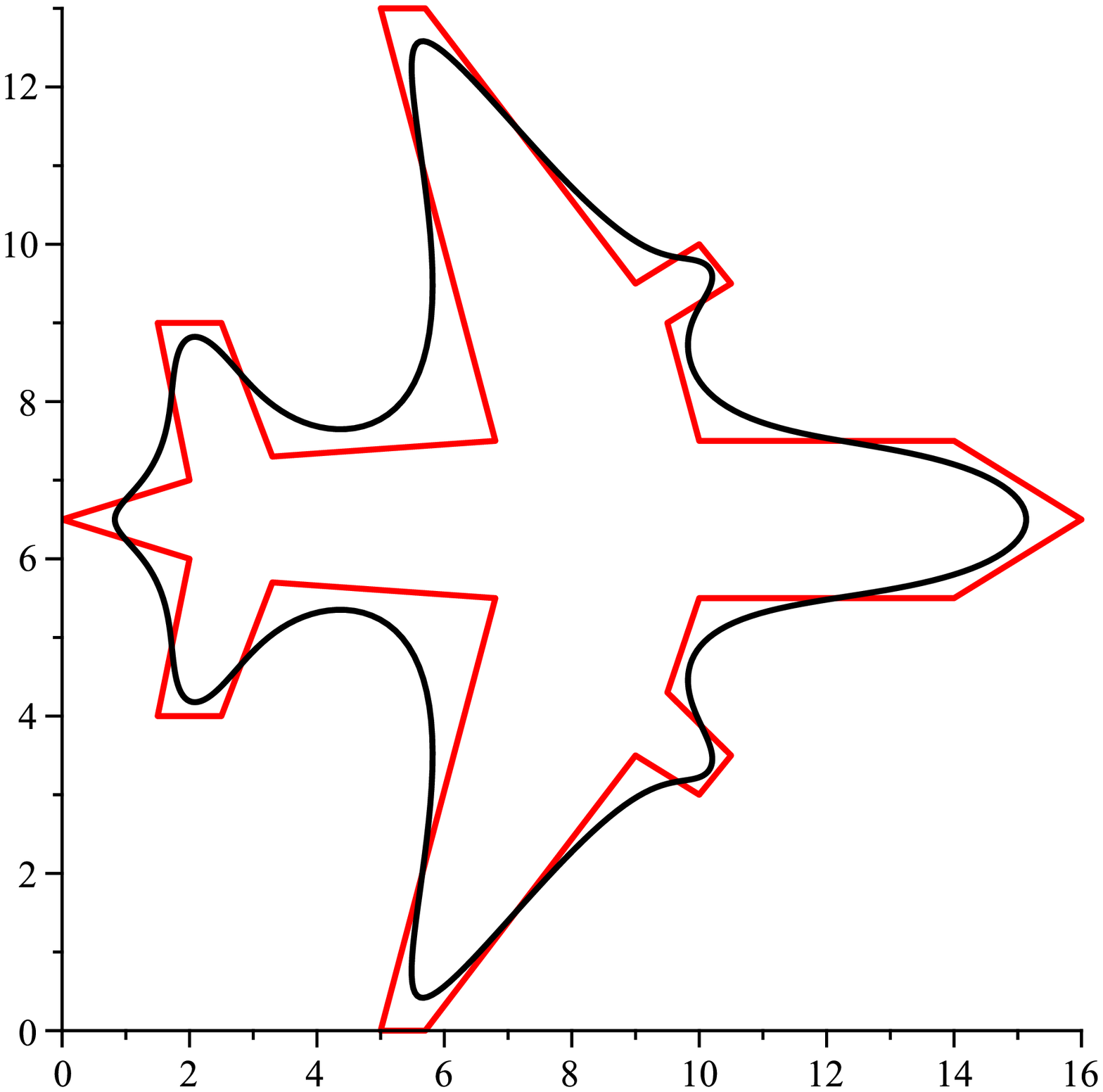, width=1.5 in} \\
(a) One SS by BSS &(b) One SS by QSS & (c) Two SSs by BSS & (d) Two SSs by QSS
\end{tabular}
\caption{\label{w1}\emph{Curves generated by the binary and quaternary subdivision schemes (\ref{constaa1}) and (\ref{constaa2}) respectively.}}
\end{figure}
The graphical inspection and comparison of the binary subdivision scheme  (\ref{constaa1}) and the quaternary subdivision scheme (\ref{constaa2}) after one and two subdivision steps is given in Figure \ref{w1}. This figure clearly shows that the quaternary subdivision scheme smooths the model more efficiently as compare to the binary subdivision scheme.

\begin{rem}
	In captions of the Figures \ref{w1}-\ref{w7}, SS, BSS and QSS denote the Subdivision Step, Binary Subdivision Scheme and the Quaternary Subdivision Scheme respectively. Moreover, in Figures \ref{w1}-\ref{w7} red solid lines represent the initial polygons, blue solid lines show the  curves fitted by the binary and the quaternary subdivision schemes after one subdivision level, while the black solid lines show the curves fitted by the binary and quaternary subdivision schemes after two subdivision steps.
\end{rem}
\begin{cor}
	This corollary is the application of Theorem \ref{thm-even-2} for $m=2$. The binary subdivision scheme (\ref{1b}) for $m=2$ is:
\begin{eqnarray}\label{constaa}
\left\{\begin{array}{cccc}
{g}_{2\varphi-1}^{k+1}&=&\beta_{-6}\,\ g_{\varphi-4}^{k}+\beta_{-4}\,\ g_{\varphi-3}^{k}+\beta_{-2}\,\ g_{{\varphi-2}}^{k}+\beta_{0}\,\ g_{{\varphi-1}}^{k}+\beta_{2}\,\ g_{{\varphi}}^{k}+\beta_{4}\,\ g_{\varphi+1}^{k}+\beta_{6}\,\ g_{\varphi+2}^{k}\\&&+\beta_{8}\,\ g_{\varphi+3}^{k},\\
{g}_{2\varphi}^{k+1}&=&\beta_{8}\,\ g_{{\varphi-3}}^{k}+\beta_{6}\,\ g_{{\varphi-2}}^{k}+\beta_{4}\,\ g_{{\varphi-1}}^{k}+\beta_{2}\,\ g_{{\varphi}}^{k}+\beta_{0}\,\ g_{\varphi+1}^{k}+\beta_{-2}\,\ g_{\varphi+2}^{k}+\beta_{-4}\,\ g_{\varphi+3}^{k}\\&&+\beta_{-6}\,\ g_{\varphi+4}^{k}.
\end{array}\right.
\end{eqnarray}
(\ref{constaa}) is the general form of $8$-point binary subdivision scheme, the coefficients $\beta_{-6},\beta_{-4},\ldots,\beta_{6}$, $\beta_{8}$ of which can be get by any of the  known $8$-point binary subdivision scheme. Therefore, in order to get coefficients we compare the scheme (\ref{constaa}) with the $8$-point scheme presented by \cite{Siddiqi2}, so we get
\begin{eqnarray}\label{a43}
\left\{\begin{array}{cccc}
\beta_{-6}&=&{\frac {1}{82575360}}, \,\ \,\ \beta_{-4}={\frac {26039}{27525120}}, \,\ \,\ \beta_{-2}={\frac {1385999}{27525120}}, \,\ \,\ \beta_{0}={\frac {26672209}{82575360}}, \\ \\ \beta_{2}&=&{\frac {4210971}{9175040}},\,\ \,\ \,\ \,\ \beta_{4}={\frac {1440007}{9175040}}, \,\ \,\ \,\ \beta_{6}={\frac {806047}{82575360}}, \,\ \,\ \,\ \beta_{8}={\frac {243}{9175040}}.
\end{array}\right.
\end{eqnarray}
If we put $m=2$ in (\ref{1q}), we get the $12$-points relaxed quaternary subdivision scheme whose mask is:
\begin{eqnarray*}
\mu_{8}&=&\{\beta_{-6}^{2}, \,\ \beta_{8}\beta_{-6}, \,\ \beta_{-4}\beta_{-6}+\beta_{-6}\beta_{8},\,\  \beta_{8}^{2}+\beta_{6}\beta_{-6}, \,\ \beta_{-4}\beta_{8}+\beta_{-2}\beta_{-6}+\beta_{-6}\beta_{-4},\,\ \beta_{8}\beta_{-4}+\\&&\beta_{4}\beta_{-6}+\beta_{6}\beta_{8},\,\
\beta_{0}\beta_{-6}+\beta_{-4}^{2}+\beta_{-2}\beta_{8}+\beta_{-6}\beta_{6},\,\ \beta_{8}\beta_{6}+\beta_{4}\beta_{8}+
\beta_{6}\beta_{-4}+\beta_{2}\beta_{-6}, \,\ \beta_{0}\beta_{8}\\&&+\beta_{-4}\beta_{6}+\beta_{2}\beta_{-6}+
\beta_{-2}\beta_{-4}+\beta_{-6}\beta_{-2}, \,\ \beta_{8}\beta_{-2}+\beta_{4}\beta_{-4}+\beta_{0}\beta_{-6}+\beta_{6}^{2}+\beta_{2}\beta_{8},\,\ \beta_{4}\beta_{-6} \\&&+
\beta_{0}\beta_{-4}+\beta_{-4}\beta_{-2}+\beta_{2}\beta_{8}+\beta_{-2}\beta_{6}+\beta_{-6}\beta_{4},\,\  \beta_{8}\beta_{4}+\beta_{4}\beta_{6}+\beta_{0}\beta_{8}+
\beta_{6}\beta_{-2}+\beta_{2}\beta_{-4}+\\&&\beta_{-2}\beta_{-6}, \,\ \beta_{4}\beta_{8}+\beta_{0}\beta_{6}+\beta_{-4}\beta_{4}+\beta_{6}\beta_{-6}+\beta_{2}\beta_{-4}
+\beta_{-2}^{2}+\beta_{-6}\beta_{0}, \,\ \beta_{8}\beta_{0}+\beta_{4}\beta_{-2}+\beta_{0}\\&&\times\beta_{-4}+\beta_{-4}\beta_{-6}+\beta_{6}\beta_{4}+
\beta_{2}\beta_{6}+\beta_{-2}\beta_{8}, \,\ \beta_{8}\beta_{-6}+\beta_{4}\beta_{-4}+\beta_{0}\beta_{-2}+\beta_{-4}\beta_{0}
+\beta_{6}\beta_{8}+\\&&\beta_{2}\beta_{6}+\beta_{-2}\beta_{4}+\beta_{-6}\beta_{2}, \,\ \beta_{8}\beta_{2}+\beta_{4}^{2}+
\beta_{0}\beta_{6}+\beta_{-4}\beta_{8}+\beta_{6}\beta_{0}+\beta_{2}\beta_{-2}+ \beta_{-2}\beta_{-4}+\beta_{-6}^{2},\\&& \beta_{8}^{2}+\beta_{4}\beta_{6}+\beta_{0}\beta_{4}+\beta_{-4}\beta_{2}
+\beta_{6}\beta_{-4}+\beta_{2}\beta_{-2}+\beta_{-2}\beta_{0}+\beta_{-6}\beta_{2}, \,\ \beta_{8}\beta_{2}+\beta_{4}\beta_{0}+
\beta_{0}\beta_{-2}\\&&+\beta_{-4}^{2}+\beta_{6}\beta_{2}+\beta_{2}\beta_{4}+\beta_{-2}\beta_{6}+\beta_{-6}\beta_{8},
 \beta_{8}\beta_{-4}+\beta_{4}\beta_{-2}+\beta_{0}^{2}+\beta_{-4}\beta_{2}+
\beta_{6}^{2}+\beta_{2}\beta_{4}+\\&&\beta_{-2}\beta_{2}+\beta_{-6}\beta_{0}, \,\ \beta_{8}\beta_{0}+\beta_{4}\beta_{2}+\beta_{0}\beta_{4}+\beta_{-4}\beta_{6}+\beta_{6}\beta_{2}+
\beta_{2}\beta_{0}+\beta_{-2}^{2}+\beta_{-6} \beta_{-4},\,\ \beta_{8} \beta_{6}\\&&+\beta_{4}^{2}+\beta_{0}\beta_{2}+\beta_{-4}\beta_{0}+\beta_{6}\beta_{-2}+\beta_{2}\beta_{0}+\beta_{-2}
\beta_{2}+\beta_{-6}\beta_{4}, \,\ \beta_{8}\beta_{4}+\beta_{4}\beta_{2}+\beta_{0}^{2}+\beta_{-4}\beta_{-2}\\&&+\beta_{6}\beta_{0}+
\beta_{2}^{2}+\beta_{-2}\beta_{4}+\beta_{-6}\beta_{6},\,\ \beta_{8}\beta_{-2}+\beta_{4}\beta_{0}+
\beta_{0}\beta_{2}+\beta_{-4}\beta_{4}+\beta_{6}\beta_{4}+\beta_{2}^{2}+\beta_{-2}\beta_{0}
+\\&&\beta_{-6}\beta_{-2}, \,\  \beta_{8}\beta_{-2}+\beta_{4}\beta_{0}+
\beta_{0}\beta_{2}+\beta_{-4}\beta_{4}+\beta_{6}\beta_{4}+\beta_{2}^{2}+\beta_{-2}\beta_{0}
+ \beta_{-6}\beta_{-2}, \,\ \beta_{8}\beta_{4}+\beta_{4}\beta_{2}
\end{eqnarray*}
\begin{eqnarray*}
&&+\beta_{0}^{2}+\beta_{-4}\beta_{-2}+\beta_{6}\beta_{0}+
\beta_{2}^{2}+\beta_{-2}\beta_{4}+\beta_{-6}\beta_{6}, \,\ \beta_{8} \beta_{6}+\beta_{4}^{2}+\beta_{0}\beta_{2}+\beta_{-4}\beta_{0}+\beta_{6}\beta_{-2}+\\&&\beta_{2}\beta_{0}+\beta_{-2}
\beta_{2}+\beta_{-6}\beta_{4} , \,\ \beta_{8}\beta_{0}+\beta_{4}\beta_{2}+\beta_{0}\beta_{4}+\beta_{-4}\beta_{6}+\beta_{6}\beta_{2}+
\beta_{2}\beta_{0}+\beta_{-2}^{2}+\beta_{-6} \beta_{-4},\\&& \beta_{8}\beta_{-4}+\beta_{4}\beta_{-2}+\beta_{0}^{2}+\beta_{-4}\beta_{2}+
\beta_{6}^{2}+\beta_{2}\beta_{4}+\beta_{-2}\beta_{2}+\beta_{-6}\beta_{0}, \,\ \beta_{8}\beta_{2}+\beta_{4}\beta_{0}+
\beta_{0}\beta_{-2}+\\&&\beta_{-4}^{2}+\beta_{6}\beta_{2}+\beta_{2}\beta_{4}+\beta_{-2}\beta_{6}+
\beta_{-6}\beta_{8},\,\ \beta_{8}^{2}+\beta_{4}\beta_{6}+\beta_{0}\beta_{4}+\beta_{-4}\beta_{2}
+\beta_{6}\beta_{-4}+\beta_{2}\beta_{-2}+\\&& \beta_{-2}\beta_{0}+\beta_{-6}\beta_{2},\,\ \beta_{8}\beta_{2}+\beta_{4}^{2}+
\beta_{0}\beta_{6}+\beta_{-4}\beta_{8}+\beta_{6}\beta_{0}+\beta_{2}\beta_{-2}+ \beta_{-2}\beta_{-4}+\beta_{-6}^{2},\,\ \beta_{8}\beta_{-6}\\&&+\beta_{4}\beta_{-4}+\beta_{0}\beta_{-2}+\beta_{-4}\beta_{0}
+\beta_{6}\beta_{8}+\beta_{2}\beta_{6}+\beta_{-2}\beta_{4}+\beta_{-6}\beta_{2}, \,\ \beta_{8}\beta_{0}+\beta_{4}\beta_{-2}+\beta_{0}\beta_{-4}+\\&&\beta_{-4}\beta_{-6}+\beta_{6}\beta_{4}+
\beta_{2}\beta_{6}+\beta_{-2}\beta_{8}, \,\ \beta_{4}\beta_{8}+\beta_{0}\beta_{6}+\beta_{-4}\beta_{4}+\beta_{6}\beta_{-6}+\beta_{2}\beta_{-4}
+\beta_{-2}^{2}+\beta_{-6}\\&&\times \beta_{0},\,\ \beta_{8}\beta_{4}+\beta_{4}\beta_{6}+\beta_{0}\beta_{8}+
\beta_{6}\beta_{-2}+\beta_{2}\beta_{-4}+\beta_{-2}\beta_{-6},\,\ \beta_{4}\beta_{-6}+
\beta_{0}\beta_{-4}+\beta_{-4}\beta_{-2}+\beta_{2}\\&&\times\beta_{8}+\beta_{-2}\beta_{6}+\beta_{-6}\beta_{4}, \,\ \beta_{8}\beta_{-2}+\beta_{4}\beta_{-4}+\beta_{0}\beta_{-6}+\beta_{6}^{2}+\beta_{2}\beta_{8},\,\ \beta_{0}\beta_{8}+\beta_{-4}\beta_{6}+\beta_{2}\beta_{-6}\\&&+
\beta_{-2}\beta_{-4}+\beta_{-6}\beta_{-2},\,\ \beta_{8}\beta_{6}+\beta_{4}\beta_{8}+
\beta_{6}\beta_{-4}+\beta_{2}\beta_{-6},\,\ \beta_{0}\beta_{-6}+
\beta_{-4}^{2}+\beta_{-2}\beta_{8}+\beta_{-6}\beta_{6}, \\&& \beta_{8}\beta_{-4}+\beta_{4}\beta_{-6}+\beta_{6}\beta_{8},\,\ \beta_{-4}\beta_{8}+\beta_{-2}\beta_{-6}+\beta_{-6}\beta_{-4},\,\ \beta_{8}^{2}+\beta_{6}\beta_{-6},\,\ \beta_{-4}\beta_{-6}+\beta_{-6}\beta_{8},\\&& \beta_{8}\beta_{-6},\,\ \beta_{-6}^{2}\}.
\end{eqnarray*}
The mask of the quternary subdivision scheme is the non-linear combination of the mask  $\beta_{-6},\beta_{-4},\ldots,\beta_{6}$, $\beta_{8}$ of the $8$-point binary subdivision scheme. Therefore, by using (\ref{a43}) in the mask $\mu_{8}$, we get the mask of the following quternary subdivision scheme
\begin{eqnarray}\label{const32}
\left\{\begin{array}{ccccc}
{g}_{4\varphi-2}^{k+1}&=&\check{A}_{1}g_{\varphi-5}^{k}+\check{A}_{2}g_{\varphi-4}^{k}+\check{A}_{3}g_{\varphi-3}^{k}+\check{A}_{4}
g_{\varphi-2}^{k}+\check{A}_{5}g_{\varphi-1}^{k}+\check{A}_{6}g_{\varphi}^{k}+\check{A}_{7}g_{\varphi+1}^{k}+\check{A}_{8}
g_{\varphi+2}^{k}\\&&+\check{A}_{9}g_{\varphi+3}^{k}+\check{A}_{10}g_{\varphi+4}^{k}+\check{A}_{11}g_{\varphi+5}^{k},\\ \\
{g}_{4\varphi-1}^{k+1}&=&\check{A}_{11}g_{\varphi-5}^{k}+\check{A}_{10}g_{\varphi-4}^{k}+\check{A}_{9}g_{\varphi-3}^{k}
+\check{A}_{8}
g_{\varphi-2}^{k}+\check{A}_{7}g_{\varphi-1}^{k}+\check{A}_{6}g_{\varphi}^{k}+\check{A}_{5}g_{\varphi+1}^{k}+\check{A}_{4}
g_{\varphi+2}^{k}\\&&+\check{A}_{3}g_{\varphi+3}^{k}+\check{A}_{2}g_{\varphi+4}^{k}+\check{A}_{1}g_{\varphi+5}^{k},\\ \\
{g}_{4\varphi}^{k+1}&=&\check{B}_{1}g_{\varphi-5}^{k}+\check{B}_{2}g_{\varphi-4}^{k}+\check{B}_{3}g_{\varphi-3}^{k}+\check{B}_{4}
g_{\varphi-2}^{k}+\check{B}_{5}g_{\varphi-1}^{k}+\check{B}_{6}g_{\varphi}^{k}+\check{B}_{7}g_{\varphi+1}^{k}+\check{B}_{8}
g_{\varphi+2}^{k}+\\&&\check{B}_{9}g_{\varphi+3}^{k}+\check{B}_{10}g_{\varphi+4}^{k}+\check{B}_{11}g_{\varphi+5}^{k}+\check{B}_{12}
g_{\varphi+6}^{k},\\ \\
{g}_{4\varphi+1}^{k+1}&=&\check{B}_{12}g_{\varphi-5}^{k}+\check{B}_{11}g_{\varphi-4}^{k}+\check{B}_{10}
g_{\varphi-3}^{k}+\check{B}_{9}
g_{\varphi-2}^{k}+\check{B}_{8}g_{\varphi-1}^{k}+\check{B}_{7}g_{\varphi}^{k}+\check{B}_{6}g_{\varphi+1}^{k}+\check{B}_{5}
g_{\varphi+2}^{k}\\&&+\check{B}_{4}g_{\varphi+3}^{k}+\check{B}_{3}g_{\varphi+4}^{k}+\check{B}_{2}g_{\varphi+5}^{k}+\check{B}_{1}
g_{\varphi+6}^{k},
\end{array}\right.
\end{eqnarray}
where
\begin{eqnarray}\label{a44}
\left\{\begin{array}{ccccc}
\check{A}_{1}&=&{\frac{698627}{852336259891200}}, \,\ \,\ \check{A}_{2}={\frac{3879598117}{284112086630400}}, \,\ \,\ \check{A}_{3}={\frac{350941180003}{142056043315200}}, \,\ \,\
\check{A}_{4}={\frac{36602385889}{676457349120}},\,\ \,\  \\ \\ \check{A}_{5}&=&{\frac{5641800724981}{20293720473600}}, \,\ \,\ \check{A}_{6}={\frac{247891317863}{579820584960}}, \,\ \,\  \check{A}_{7}={\frac{527402518309}{2536715059200}}, \,\ \,\ \check{A}_{8}={\frac{2067049873661}{71028021657600}},\,\ \,\ \\ \\ \check{A}_{9}&=&{\frac{48690122269}{56822417326080}}, \,\ \,\ \check{A}_{10}={\frac{1902910423}{852336259891200}}, \,\ \,\ \check{A}_{11}={\frac{239}{20293720473600}}\,\ \,\
\check{B}_{1}={\frac{27}{84181359001600}}, \\ \\ \check{B}_{2}&=&{\frac{30898837}{108233175859200}}, \,\ \,\ \check{B}_{3}={\frac{1754117761421}{6818690079129600}}, \,\ \,\ \check{B}_{4}={\frac{32344488846199}{2272896693043200}}, \,\ \,\ \check{B}_{5}={\frac{163622535291293}{1136448346521600}}, \\ \\ \check{B}_{6}&=&{\frac{12926815750607}{32469952757760}}, \,\ \,\  \check{B}_{7}={\frac{56015931444329}{162349763788800}}, \,\ \,\ \check{B}_{8}={\frac{104604880235159}{1136448346521600}}, \,\ \,\ \check{B}_{9}={\frac{75466139809}{12025908428800}}, \\ \\  \check{B}_{10}&=&{\frac{148716800989}{2272896693043200}}, \,\ \,\ \check{B}_{11}={\frac{58359331}{2272896693043200}}, \,\ \,\ \check{B}_{12}={\frac{1}{6818690079129600}}.\,\ \,\ \,\ \,\ \,\ \,\ \,\ \,\ \,\ \,\  \,\ \,\ \,\ \,\ \,\ \,\
\end{array}\right.
\end{eqnarray}
\end{cor}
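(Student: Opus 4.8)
The plan is to treat this corollary purely as a specialization of Theorem~\ref{thm-even-2} to the case $m=2$, so that no new conceptual machinery is required; the work is entirely bookkeeping and exact rational arithmetic. First I would set $n=2m=4$ in the general $2n$-point binary scheme~(\ref{1b}), which makes the summation index $\lambda$ run from $-3$ to $4$ and produces exactly the eight mask entries $\beta_{-6},\beta_{-4},\ldots,\beta_{8}$ displayed in the $8$-point form~(\ref{constaa}). To pin down numerical values I would match~(\ref{constaa}) coefficientwise against the known $8$-point binary approximating scheme of~\cite{Siddiqi2}, reading off the fractions recorded in~(\ref{a43}).

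Next I would invoke Theorem~\ref{thm-even-2} directly: substituting $m=2$ into the compact quaternary rules~(\ref{1q}) collapses the double sums $\sum_{\lambda=-m+1}^{m}\sum_{\alpha=-2m}^{2m-1}$ to the finite ranges $\lambda\in\{-1,0,1,2\}$ and $\alpha\in\{-4,-3,\ldots,3\}$. Expanding each of the four rules and gathering the products $\beta_i\beta_j$ that multiply a common shifted control point $g_{\varphi+j}^{k}$ yields the symbolic mask $\mu_{8}$. The only real care needed here is index alignment: one must check that the shift $\alpha+\lambda$ (and the shift $\alpha+\lambda+1$ appearing in the second summands of the $g_{4\varphi}$ and $g_{4\varphi+1}$ rules) sends each product into the correct slot, so that the coefficient lists for $g_{4\varphi-2}$ and $g_{4\varphi-1}$ span the $11$ points $g_{\varphi-5},\ldots,g_{\varphi+5}$ while those for $g_{4\varphi}$ and $g_{4\varphi+1}$ span the $12$ points $g_{\varphi-5},\ldots,g_{\varphi+6}$, in agreement with the point counts established in Theorem~\ref{thm-even-1}.

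Finally I would substitute the fractions~(\ref{a43}) into each entry of $\mu_{8}$ and carry out the exact arithmetic on the non-linear (product) combinations, producing the numerical coefficients $\check{A}_{1},\ldots,\check{A}_{11}$ and $\check{B}_{1},\ldots,\check{B}_{12}$ of~(\ref{a44}) and hence the scheme~(\ref{const32}). Two inexpensive consistency checks would confirm the output: the palindromic pairing forced by the construction (the rule for $g_{4\varphi-1}$ reuses the $\check{A}_i$ in reverse order, and $g_{4\varphi+1}$ reuses the $\check{B}_i$ in reverse), and the affine normalization that the coefficients of each of the four rules sum to $1$, which follows from $\sum_j\beta_j=1$ for the underlying binary mask. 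The main obstacle is not mathematical depth but the sheer volume of error-prone symbolic collection and exact fraction evaluation; everything conceptual has already been supplied by Theorems~\ref{thm-even-1} and~\ref{thm-even-2}.
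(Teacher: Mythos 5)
Your proposal matches the paper's own derivation essentially step for step: specialize (\ref{1b}) and (\ref{1q}) to $m=2$, read off the mask (\ref{a43}) from the $8$-point scheme of \cite{Siddiqi2}, expand the double sums with $\lambda\in\{-1,\ldots,2\}$, $\alpha\in\{-4,\ldots,3\}$ to obtain the symbolic products $\mu_{8}$, and evaluate them exactly to get (\ref{a44}), with the point counts $11$ and $12$ agreeing with Theorem~\ref{thm-even-1}. Your added sanity checks (palindromic symmetry of the rules and the coefficients of each rule summing to $1$) are correct and a sensible supplement, but the route is the same as the paper's.
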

\begin{figure}[h]
\begin{tabular}{cccc}
\epsfig{file=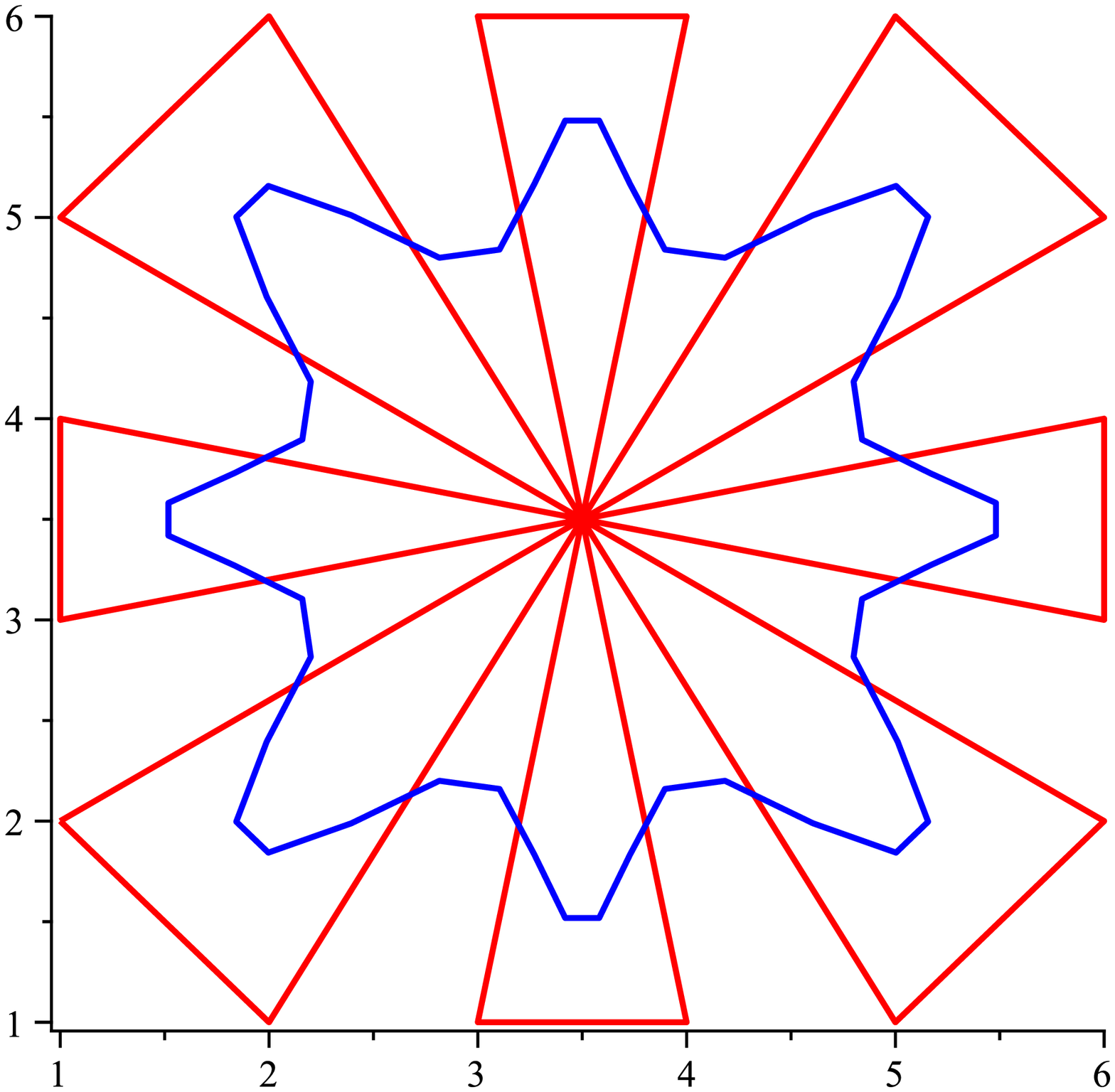, width=1.5 in} & \epsfig{file=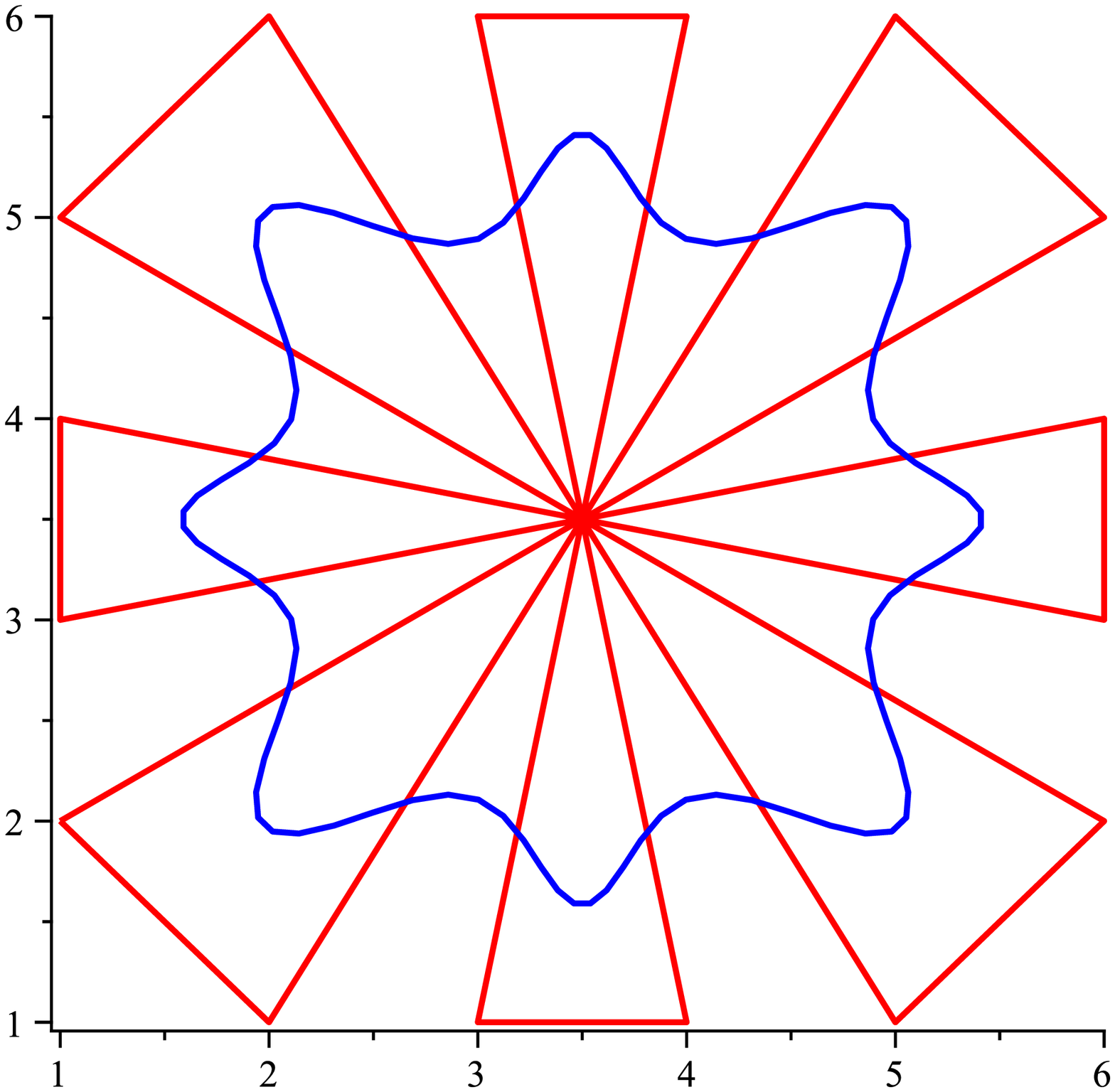, width=1.5 in} & \epsfig{file=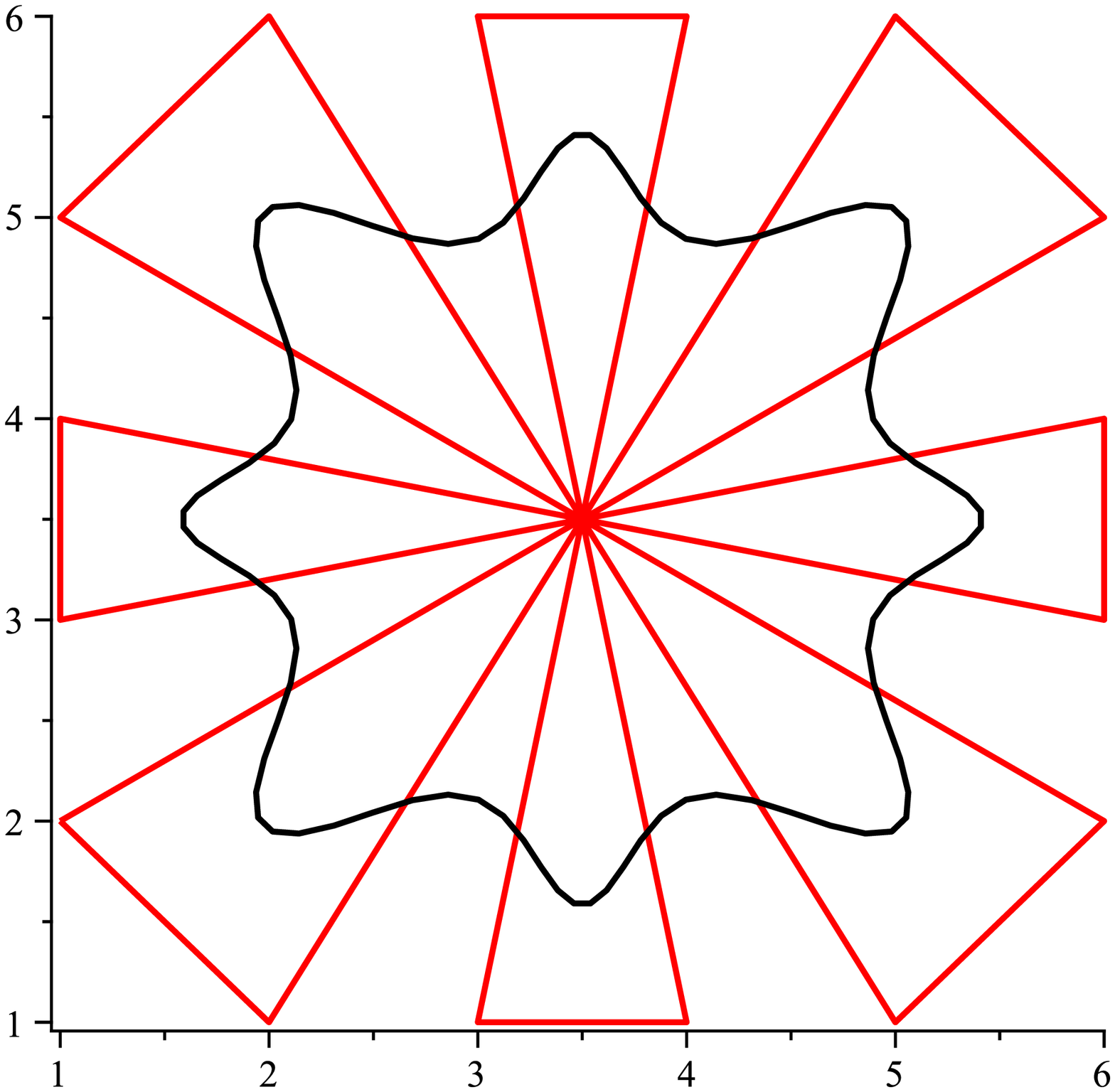, width=1.5 in} & \epsfig{file=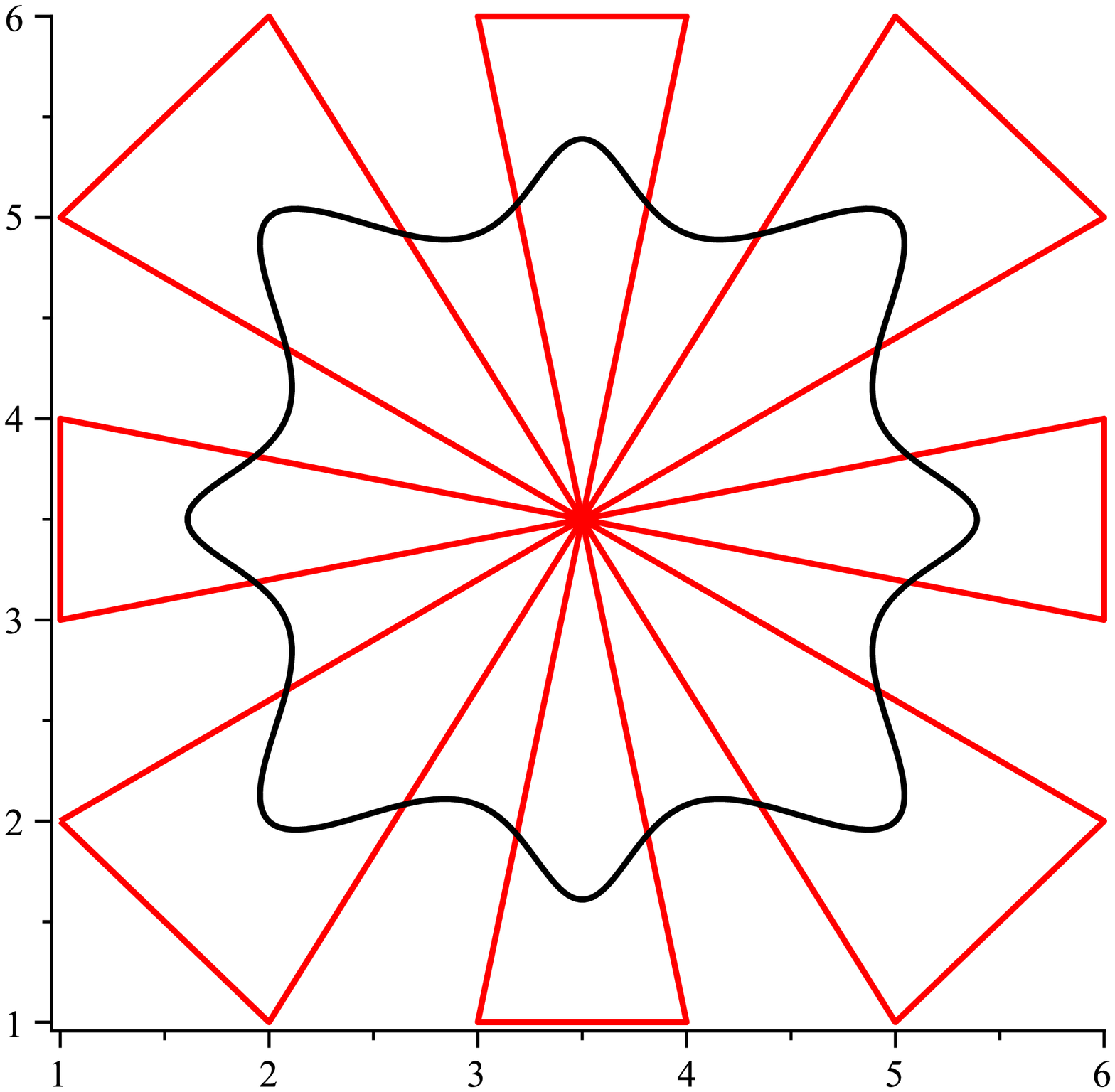, width=1.5 in} \\
(a) One SS by BSS &(b) One SS by QSS & (c) Two SSs by BSS & (d) Two SSs by QSS
\end{tabular}
\caption{\label{w2}\emph{Curves generated by the binary and quaternary subdivision schemes (\ref{constaa}) and (\ref{const32}) respectively.}}
\end{figure}
Figure \ref{w2} shows the models fitted by the binary subdivision scheme  (\ref{constaa}) and the quaternary subdivision scheme (\ref{const32}) after one and two subdivision steps. Again this graphical inspection shows the superiority of the quaternary subdivision scheme over the binary subdivision scheme.
\begin{cor}
	In this corollary, we use the even-points binary subdivision scheme (\ref{1b}) and expand it when $m=3$. As the result we get the following $12$-points binary subdivision scheme:
	\begin{eqnarray}\label{consta32}
		\left\{\begin{array}{ccccc}
			{g}_{2\varphi-1}^{k+1}&=&\beta_{-10}\,\ g_{\varphi-6}^{k}+\beta_{-8}\,\ g_{\varphi-5}^{k}+
			\beta_{-6}\,\ g_{\varphi-4}^{k}+\beta_{-4}\,\ g_{\varphi-3}^{k}+\beta_{-2}\,\ g_{\varphi-2}^{k}+
			\beta_{0}\,\ g_{\varphi-1}^{k}\\&&+\beta_{2}\,\ g_{\varphi}^{k}+ \beta_{4}\,\ g_{\varphi+1}^{k}+\beta_{6}\,\ g_{\varphi+2}^{k}+\beta_{8}\,\ g_{\varphi+3}^{k}+\beta_{10}\,\ g_{\varphi+4}^{k}+\beta_{12}\,\ g_{\varphi+5}^{k},\\
			{g}_{2\varphi}^{k+1}&=&\beta_{12}\,\ g_{\varphi-5}^{k}+\beta_{10}\,\ g_{\varphi-4}^{k}+\beta_{8}\,\ g_{\varphi-3}^{k}+\beta_{6}\,\ g_{\varphi-2}^{k}+\beta_{4}\,\ g_{\varphi-1}^{k}+\beta_{2}\,\ g_{\varphi}^{k}+\beta_{0}\,\ g_{\varphi+1}^{k}\\&&+\beta_{-2}\,\ g_{\varphi+2}^{k}+\beta_{-4}\,\ g_{\varphi+3}^{k}+\beta_{-6}\,\ g_{\varphi+4}^{k}+\beta_{-8}\,\ g_{\varphi+5}^{k}+\beta_{-10}\,\ g_{\varphi+6}^{k}.
		\end{array}\right.
	\end{eqnarray}
	A $12$-points binary subdivision scheme which we get by the  algorithm presented by \cite{Siddiqi2} gives the coefficients  $\beta_{-10}, \beta_{-8}, \ldots,  \beta_{10}$, $\beta_{12}$ of this scheme. Hence the coefficients are:
	\begin{eqnarray}\label{a45}
		\left\{\begin{array}{ccccc}
			\beta_{-10}&=&{\frac {1}{167423193907200}}, \,\ \,\ \beta_{-8}={\frac {48828113}{167423193907200}},  \,\ \,\ \beta_{-6}={\frac {410601629}{2232309252096}},\,\ \,\ \beta_{-4}={\frac {52548530917}{6200859033600}},  \\ \\ \,\ \beta_{-2}&=&{\frac {2471063221063}{27903865651200}}, \,\ \,\ \,\ \beta_{0}={\frac {1680588922139}{5580773130240}}, \,\ \,\  \beta_{2}={\frac {2134020225233}{5580773130240}}, \,\ \,\ \,\ \,\ \,\ \beta_{4}={\frac {69086299223}{372051542016}}, \\ \\  \beta_{6}&=&{\frac {1785493468247}{55807731302400}}, \,\ \,\  \beta_{8}={\frac {261595441397}{167423193907200}}, \,\ \,\ \beta_{10}={\frac {1975200979}{167423193907200}}, \,\ \,\  \beta_{12}={\frac {2187}{2066953011200}}.
		\end{array}\right.
	\end{eqnarray}
	Now first we simplify (\ref{1q}) for $m=3$ and then substitute these 12 values from (\ref{a45}) into (\ref{1q}),  thus we get the 17-points relaxed quaternary approximating subdivision scheme, that is
	\begin{eqnarray}\label{const36}
		\left\{\begin{array}{ccccc}
			{g}_{4\varphi-2}^{k+1}&=&C_{1}g_{\varphi-8}^{k}+C_{2}g_{\varphi-7}^{k}+C_{3} g_{\varphi-6}^{k}+C_{4}g_{\varphi-5}^{k} +C_{5} g_{\varphi-4}^{k}+C_{6} g_{\varphi-3}^{k} +C_{7}g_{\varphi-2}^{k}+C_{8}g_{\varphi-1}^{k}\\&& +C_{9}g_{\varphi}^{k}+C_{10}g_{\varphi+1}^{k} +C_{11}g_{\varphi+2}^{k}+C_{12}g_{\varphi+3}^{k}+C_{13}g_{\varphi+4}^{k}+C_{14}g_{\varphi+5}^{k} +C_{15}g_{\varphi+6}^{k}+C_{16}\\&&  \times g_{\varphi+7}^{k}+C_{17}g_{\varphi+8}^{k},\\
			{g}_{4\varphi-1}^{k+1}&=&C_{17}g_{\varphi-8}^{k}+C_{16}g_{\varphi-7}^{k}+C_{15} g_{\varphi-6}^{k}+C_{14} g_{\varphi-5}^{k}+C_{13} g_{\varphi-4}^{k}+C_{12} g_{\varphi-3}^{k}+C_{11}g_{\varphi-2}^{k}+C_{10}\\&& \times g_{\varphi-1}^{k}+C_{9}g_{\varphi}^{k}+
			C_{8}g_{\varphi+1}^{k}+C_{7}g_{\varphi+2}^{k}+C_{6}g_{\varphi+3}^{k}+C_{5}g_{\varphi+4}^{k}+C_{4}
			g_{\varphi+5}^{k}+C_{3}g_{\varphi+6}^{k}+\\&& C_{2}g_{\varphi+7}^{k}+C_{1}g_{\varphi+8}^{k},\\
			{g}_{4\varphi}^{k+1}&=&D_{1}g_{\varphi-8}^{k}+D_{2}g_{\varphi-7}^{k}+D_{3}g_{\varphi-6}^{k}+D_{4}
			g_{\varphi-5}^{k}+D_{5}g_{\varphi-4}^{k}+D_{6}g_{\varphi-3}^{k}+D_{7}g_{\varphi-2}^{k}+D_{8}
			g_{\varphi-1}^{k}\\&&+D_{9}g_{\varphi}^{k}+D_{10}g_{\varphi+1}^{k}+D_{11}g_{\varphi+2}^{k}+D_{12}
			g_{\varphi+3}^{k}+D_{13}g_{\varphi+4}^{k}+D_{14}g_{\varphi+5}^{k}+D_{15}g_{\varphi+6}^{k}+D_{16}\\&& \times g_{\varphi+7}^{k}+D_{17}g_{\varphi+8}^{k}+D_{18}g_{\varphi+9}^{k},\\
			{g}_{4\varphi+1}^{k+1}&=&D_{18}g_{\varphi-8}^{k}+D_{17}g_{\varphi-7}^{k}+D_{16}g_{\varphi-6}^{k}+D_{15}
			g_{\varphi-5}^{k}+D_{14}g_{\varphi-4}^{k}+D_{13}g_{\varphi-3}^{k}+D_{12}g_{\varphi-2}^{k}+\\&& D_{11}g_{\varphi-1}^{k}+D_{10}g_{\varphi}^{k}+D_{9}g_{\varphi+1}^{k}+D_{8}g_{\varphi+2}^{k}+
			D_{7}g_{\varphi+3}^{k}+D_{6}g_{\varphi+4}^{k}+D_{5}g_{\varphi+5}^{k} +D_{4}g_{\varphi+6}^{k}\\&&+D_{3}g_{\varphi+7}^{k}+D_{2}g_{\varphi+8}^{k}+D_{1}g_{\varphi+9}^{k},
		\end{array}\right.
	\end{eqnarray}
	where
	\begin{eqnarray}\label{a66-1}
		\left\{\begin{array}{ccccccc}
			C_{1}&=&{\frac{170185003}{143012887031060669399040000}}, \,\ \,\  C_{2}={\frac{11618623511827}{2275205020948692467712000}}, \\ \\  C_{3}&=&{\frac{2639470801758827761}{87595393306524660006912000}} , \,\ \,\ C_{4}={\frac{268509519026918532793}{25027255230435617144832000}}, \\ \\
			C_{5}&=&{\frac{16029066874929664797821}{23358771548406576001843200}}, \,\ \,\ C_{6}={\frac{11208054761546257498631183}{875953933065246600069120000}}, \\ \\ C_{7}&=&{\frac{32207048328244229783317}{361964435150928347136000}}, \,\ \,\ C_{8}={\frac{4187024370524574208415201}{15926435146640847273984000}},\\ \\
			C_{9}&=&{\frac{678810440255672301860419}{1930476987471617851392000}}, \,\ \,\  C_{10}={\frac{689073490261723216665251}{3185287029328169454796800}},\\ \\
			C_{11}&=&{\frac{338960562772283782004933}{5688012552371731169280000}}, \,\ \,\ C_{12}={\frac{1191634131193759734219757}{175190786613049320013824000}}, \\ \\
			C_{13}&=&{\frac{8792959297188237328469}{31852870293281694547968000}},\,\ \,\  C_{14}={\frac{19346249685396093203}{6488547652335160000512000}}, \\ \\
			C_{15}&=&{\frac{1048707451741153}{218988483266311650017280}}, \,\ \,\
			C_{16}={\frac{245027153519101}{875953933065246600069120000}},\\ \\
			C_{17}&=&{\frac{2450263}{1401526292904394560110592000}}, \,\ \,\ D_{1}={\frac{27}{4272294750508747325440000}}, 
					\end{array}\right.
		\end{eqnarray}
		\begin{eqnarray}\label{a66-2}
			\left\{\begin{array}{ccccccc}
			D_{2}&=&{\frac{358812277001921}{28030525858087891202211840000}},\,\ \,\    D_{3}={\frac{2203625183357673913}{3503815732260986400276480000}}, \\ \\ D_{4}&=&{\frac{103411470339072618307}{140152629290439456011059200}}, \,\ \,\ D_{5}={\frac{637615639085108558009}{6229005746241753600491520}}, \\ \\ D_{6}&=&{\frac{23767610644231508920682471}{7007631464521972800552960000}},\,\ \,\
			D_{7}={\frac{132394698240692877731080079}{3503815732260986400276480000}}, \\ \\ D_{8}&=&{\frac{7705315847678086375114273}{45504100418973849354240000}},\,\ \,\
			D_{9}={\frac{17075025982837018547243819}{50964592469250711276748800}},\\ \\  D_{10}&=&{\frac{1033280151597605241630239}{3397639497950047418449920}}, \,\ \,\
			D_{11}={\frac{40126522029222497908818157}{318528702932816945479680000}},\\ \\   D_{12}&=&{\frac{79334867894972096989478381}{3503815732260986400276480000}},\,\ \,\   D_{13}={\frac{11078992716719243713520413}{7007631464521972800552960000}}, \\ \\
			D_{14}&=&{\frac{3248016674151168146933}{93435086193626304007372800}}, \,\ \,\ D_{15}={\frac{4496423852919659533}{28030525858087891202211840}}, \\ \\
			D_{16}&=&{\frac{231433236694503691}{3503815732260986400276480000}},\,\  \,\ D_{17}={\frac{8680597683899}{28030525858087891202211840000}}, \\ \\
			D_{18}&=&{\frac{1}{28030525858087891202211840000}}.
		\end{array}\right.
	\end{eqnarray}
\end{cor}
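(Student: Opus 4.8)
The plan is to treat this corollary as a direct specialization of Theorem \ref{thm-even-2} to $m=3$ followed by exact rational arithmetic; no new theory is needed, since the structural claim (that the quaternary mask is a quadratic combination of the binary mask) is already supplied by Theorem \ref{thm-even-2}, and the support sizes $6m-1=17$ and $6m=18$ are guaranteed by Theorem \ref{thm-even-1}.

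First I would record the binary scheme by setting $n=2m=6$ in (\ref{1b}); the first rule then sums $\lambda$ from $-2m+1=-5$ to $2m=6$, yielding precisely the twelve coefficients $\beta_{-10},\beta_{-8},\ldots,\beta_{10},\beta_{12}$ of (\ref{consta32}), and the values (\ref{a45}) are read off by matching against the $12$-point scheme of \cite{Siddiqi2}. This is a mechanical comparison of masks.

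Next I would specialize the quaternary formula (\ref{1q}) to $m=3$, so that $\lambda$ runs over $\{-2,-1,0,1,2,3\}$ and $\alpha$ over $\{-6,-5,\ldots,5\}$. Each of the four rules is then a sum of two $6\times 12$ double sums; after shifting the dummy index to the common control-point label $g_{\varphi+j}^{k}$ and collecting like terms, the rules $g_{4\varphi-2}^{k+1},g_{4\varphi-1}^{k+1}$ acquire labels $j\in\{-8,\ldots,8\}$ (the $17$ coefficients $C_1,\ldots,C_{17}$), while the extra shift $\alpha+\lambda+1$ in $g_{4\varphi}^{k+1},g_{4\varphi+1}^{k+1}$ extends the labels to $j\in\{-8,\ldots,9\}$ (the $18$ coefficients $D_1,\ldots,D_{18}$). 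A labor-saving observation is that the reindexing $\lambda\mapsto 1-\lambda,\ \alpha\mapsto-\alpha-1$ preserves both summation ranges and interchanges the two inner sums; this yields polynomial identities in the $\beta$'s relating the coefficient lists of $g_{4\varphi-1}^{k+1}$ and $g_{4\varphi+1}^{k+1}$ to the reversals of those of $g_{4\varphi-2}^{k+1}$ and $g_{4\varphi}^{k+1}$, which explains the palindromic layout of (\ref{const36}) and halves the number of distinct coefficients I must compute.

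Finally I would substitute the rational mask (\ref{a45}) into each collected coefficient $C_i,D_i$ and reduce. The principal obstacle is purely computational: every $C_i$ and $D_i$ is a sum of up to roughly a dozen products $\beta_p\beta_q$, and clearing the shared denominator of (\ref{a45}) and reducing each resulting fraction to lowest terms (the denominators in (\ref{a66-1})-(\ref{a66-2}) reach twenty-seven digits) is hopeless to do reliably by hand. I would therefore organise the products $\beta_p\beta_q$ in a table indexed by the total offset $p+q$ (equivalently by the control-point label), confirm the term counts against the $17$/$18$ prediction of Theorem \ref{thm-even-1}, and carry out the exact reduction in a computer-algebra system. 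As an independent check I would verify that each of the four coefficient lists sums to $1$, which is forced by the convergence condition $\sum_{\lambda}\beta_{2\lambda}=\sum_{\lambda}\beta_{2-2\lambda}=1$: in each quaternary rule both inner sums equal $1$ while the two outer sums add to $1$, so the full coefficient sum collapses to $1$.
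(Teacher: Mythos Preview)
Your proposal is correct and follows the same route as the paper: specialize (\ref{1b}) and (\ref{1q}) to $m=3$, match the binary mask against \cite{Siddiqi2}, and carry out the exact arithmetic to obtain the $C_i$ and $D_i$. Your reindexing argument for the palindromic layout of (\ref{const36}) and the sum-to-$1$ sanity check go beyond what the paper actually records, but the underlying approach is identical.
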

\begin{figure}[h]
	\begin{tabular}{cccc}
		\epsfig{file=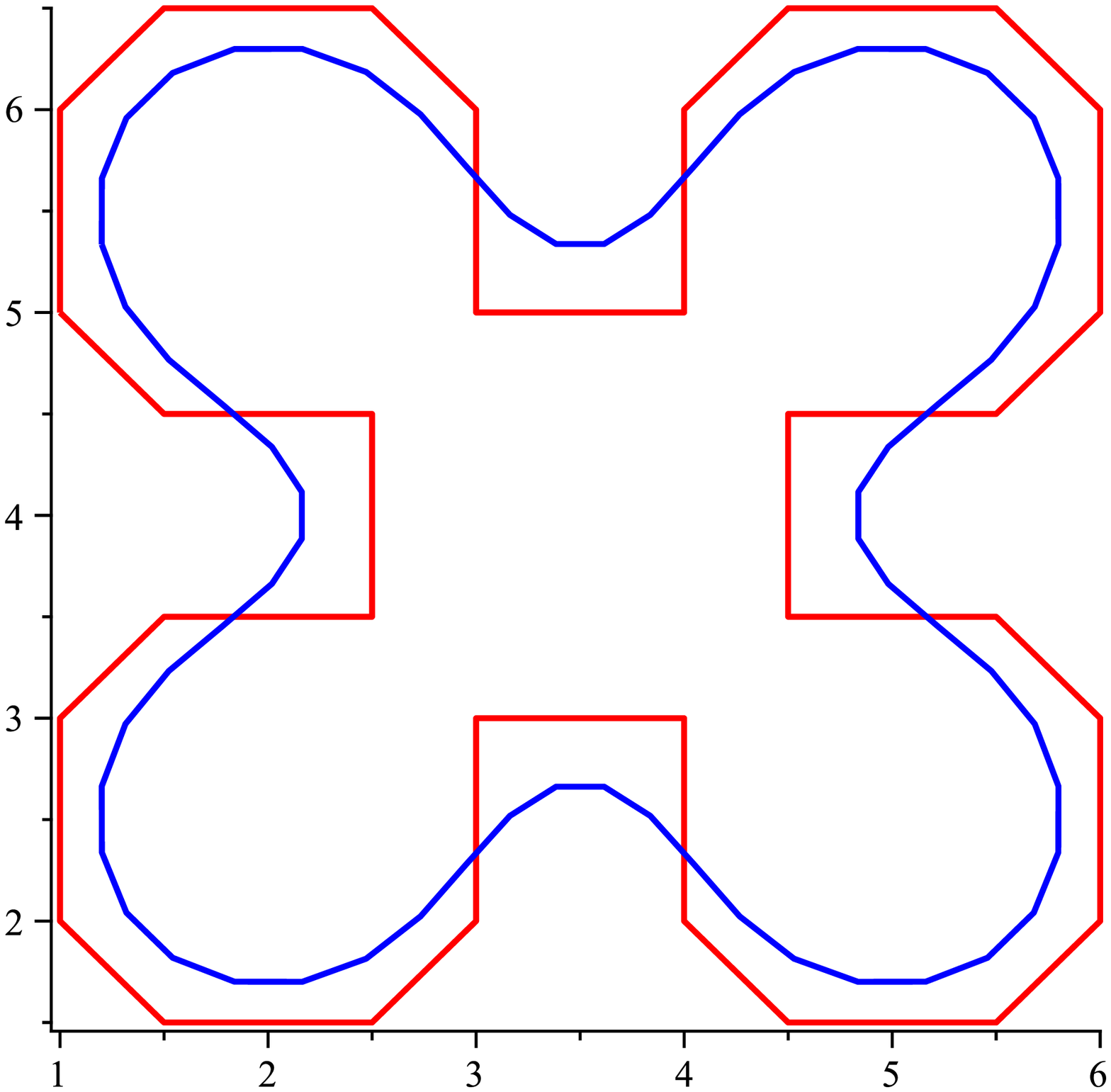, width=1.5 in} & \epsfig{file=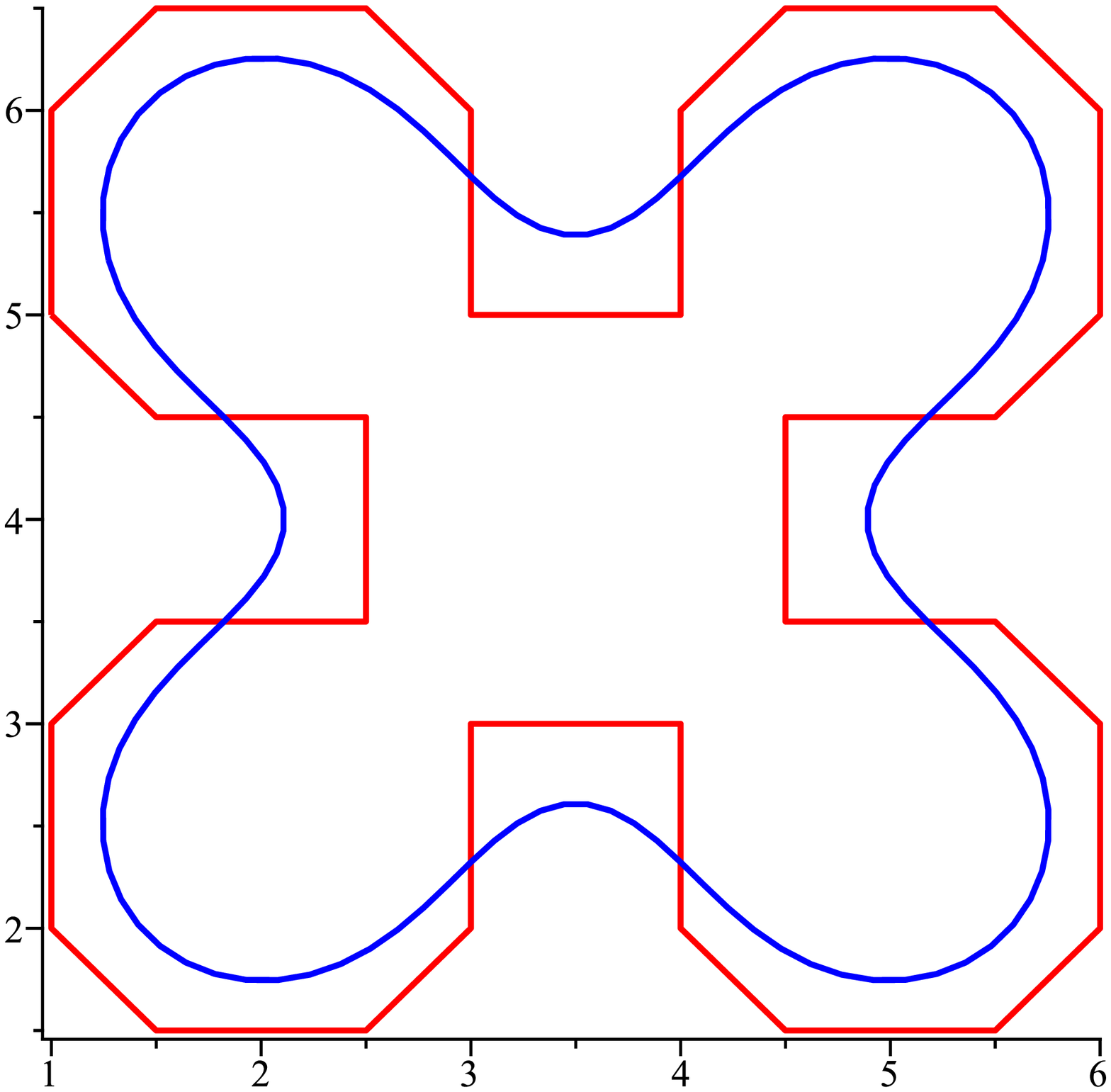, width=1.5 in} & \epsfig{file=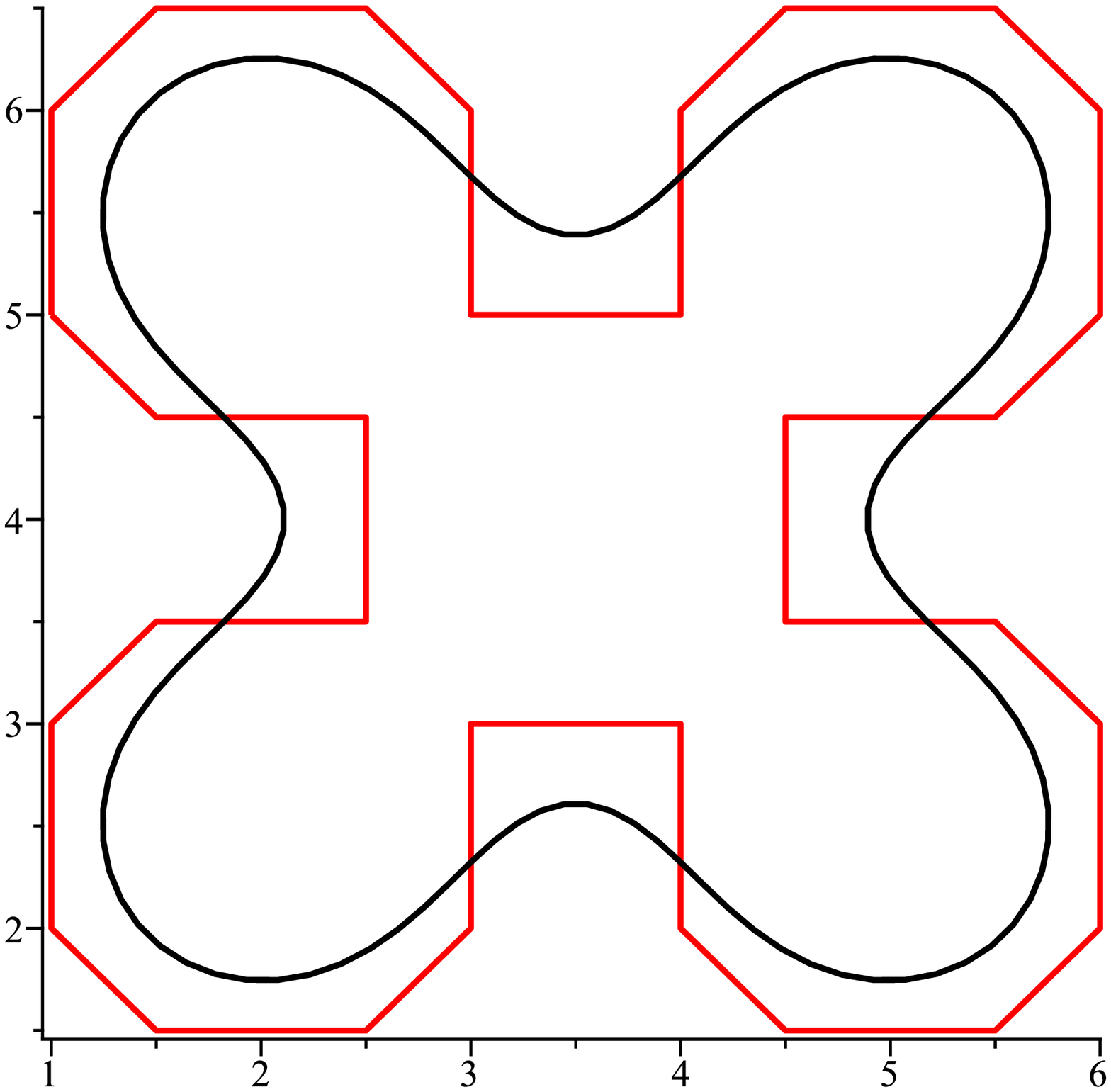, width=1.5 in} & \epsfig{file=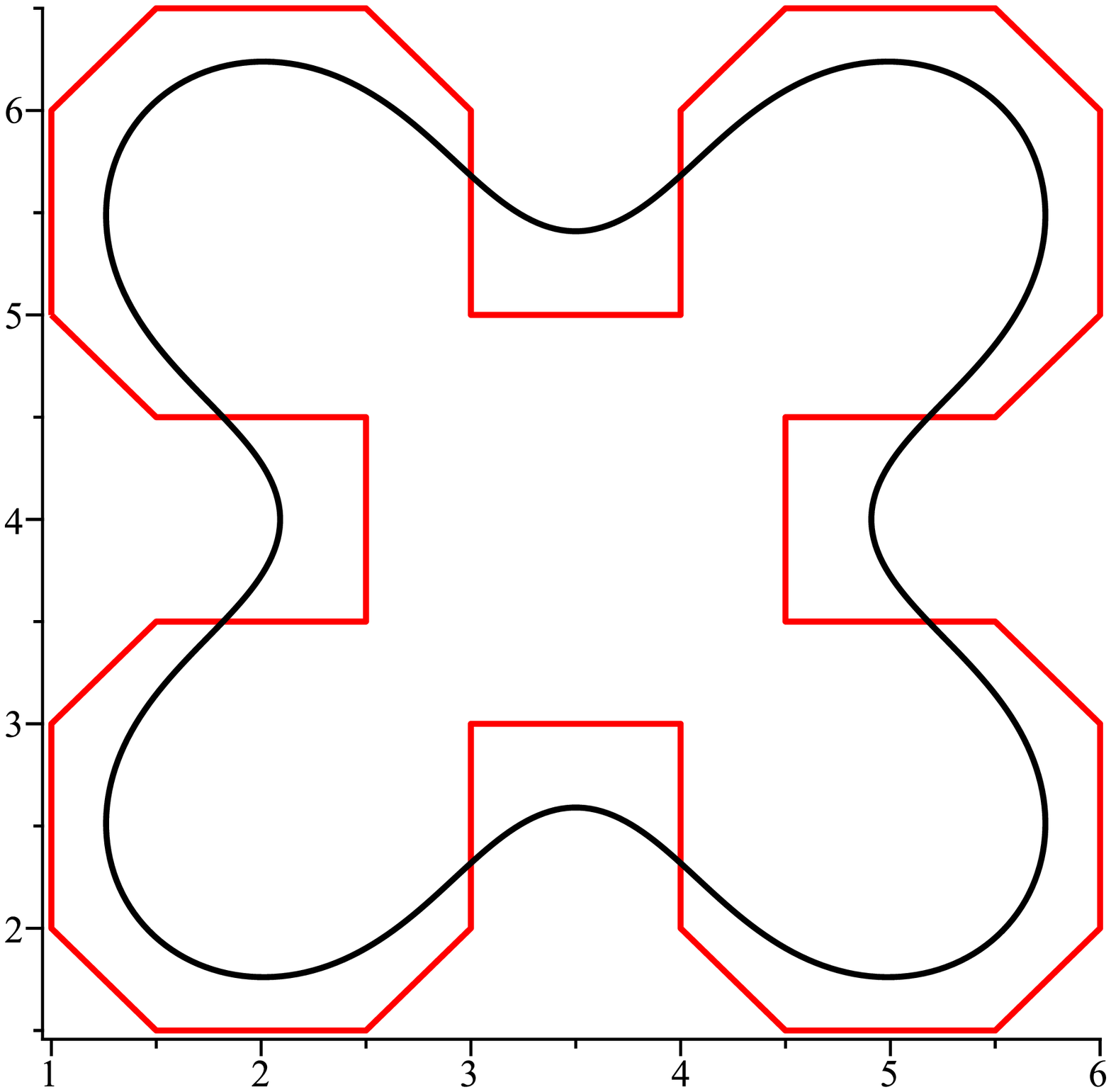, width=1.5 in} \\
		(a) One SS by BSS &(b) One SS by QSS & (c) Two SSs by BSS & (d)  Two SSs by QSS
	\end{tabular}
	\caption{\label{w3}\emph{Curves generated by the binary and quaternary subdivision schemes (\ref{consta32}) and (\ref{const36}) respectively.}}
\end{figure}
Figure \ref{w3} illustrates  that the $17$-points relaxed quaternary subdivision scheme (\ref{const36}) uses only one subdivision iterations to smooth the model while the $12$-points binary subdivision scheme (\ref{consta32}) uses two subdivision iterations to acheives that level of smoothness.

Now we present the applications of the Theorem \ref{thm-odd-2} in the next few results.
\begin{cor}
	Here we use the result which we get from Theorem \ref{thm-odd-2} for $m=0$, Thererfore we expand the $(4m+2)$-point binary subdivision scheme (\ref{const1a}) for $m=0$. Hence we get
	\begin{eqnarray}\label{consta40}
		\left\{\begin{array}{ccccc}
			{g}_{2\varphi-1}^{k+1}&=&\beta_{0}\,\ g_{{\varphi-1}}^{k}+\beta_{2}\,\ g_{{\varphi}}^{k},\\ \\
			{g}_{2\varphi}^{k+1}&=&\beta_{2}\,\ g_{{\varphi}}^{k}+\beta_{0}\,\ g_{\varphi+1}^{k}.
		\end{array}\right.
	\end{eqnarray}
	For the values of $\beta_{0}$ and $\beta_{2}$, we compare the $2$-points binary subdivision scheme (\ref{consta40}) with the well-known Chaikin's subdivision scheme, so we get:
	\begin{eqnarray}\label{beta-chaikin}
		\beta_{0}={\frac {1}{4}}, \,\ \,\ \beta_{2}={\frac {3}{4}}.
	\end{eqnarray}
We get the $2$-point relaxed quaternary subdivision scheme by putting $m=0$ in (\ref{const75a}) whose mask elements attained by the mask (\ref{beta-chaikin}) of  the scheme  (\ref{consta40}). The scheme is:
	\begin{eqnarray}\label{consta41}
		\left\{\begin{array}{ccccc}
			{g}_{4\varphi-2}^{k+1}&=&\hat{a}_{1}g_{\varphi-1}^{k}+\hat{a}_{2}g_{\varphi}^{k}+\hat{a}_{3}g_{\varphi+1}^{k},\\ \\
			{g}_{4\varphi-1}^{k+1}&=&\hat{a}_{3}g_{\varphi-1}^{k}+\hat{a}_{2}g_{\varphi}^{k}+\hat{a}_{1} g_{\varphi+1}^{k},\\ \\
			{g}_{4\varphi}^{k+1}&=&\hat{b}_{1}g_{\varphi}^{k}+\hat{b}_{2}g_{\varphi+1}^{k},\\ \\
			{g}_{4\varphi+1}^{k+1}&=&\hat{b}_{2}g_{\varphi}^{k}+\hat{b}_{1}g_{\varphi+1}^{k},
		\end{array}\right.
	\end{eqnarray}
	where
	\begin{eqnarray*}
		\hat{a}_{1}&=&{\frac{3}{16}}, \,\ \,\ \hat{a}_{2}={\frac{3}{4}}, \,\ \,\ \hat{a}_{3}={\frac{1}{16}}\,\ \,\
		\hat{b}_{1}={\frac{5}{8}}, \,\ \,\ \hat{b}_{2}={\frac{3}{8}}.
	\end{eqnarray*}
\end{cor}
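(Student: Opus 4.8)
The plan is to specialize the general $(6m+2)$-point relaxed quaternary scheme (\ref{const75a}) of Theorem \ref{thm-odd-2} to the single value $m=0$ and then substitute the Chaikin mask $\beta_0=\tfrac14,\ \beta_2=\tfrac34$ recorded in (\ref{beta-chaikin}). First I would observe that at $m=0$ every index range in (\ref{const75a}) degenerates: the outer sum over $\lambda$ runs from $-m$ to $m$, so only $\lambda=0$ survives, while the inner sum over $\alpha$ runs from $-2m$ to $2m+1$, so only $\alpha\in\{0,1\}$ survives. Each of the four rules therefore collapses to a sum of at most four products of two $\beta$'s, and I would write these out explicitly, taking care to distinguish the two shift patterns $g_{\varphi+\alpha+\lambda-1}$ and $g_{\varphi+\alpha+\lambda}$ carried by the two halves of each rule.

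Next, since the Chaikin mask has only $\beta_0$ and $\beta_2$ nonzero, I would discard every product containing a $\beta$ whose index lies outside $\{0,2\}$. For the rule $g_{4\varphi-2}^{k+1}$ this leaves $\beta_2\beta_0\,g_{\varphi-1}+(\beta_2^{2}+\beta_0\beta_2)\,g_{\varphi}+\beta_0^{2}\,g_{\varphi+1}$, and for $g_{4\varphi}^{k+1}$ it leaves $(\beta_2^{2}+\beta_0^{2})\,g_{\varphi}+2\beta_0\beta_2\,g_{\varphi+1}$. The remaining two rules $g_{4\varphi-1}^{k+1}$ and $g_{4\varphi+1}^{k+1}$ reduce to the index-reversed (mirror) versions of these, which is precisely the symmetric layout asserted in (\ref{consta41}).

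Substituting $\beta_0=\tfrac14,\ \beta_2=\tfrac34$ then yields $\hat a_1=\beta_0\beta_2=\tfrac{3}{16}$, $\hat a_2=\beta_2^{2}+\beta_0\beta_2=\tfrac34$, $\hat a_3=\beta_0^{2}=\tfrac1{16}$ and $\hat b_1=\beta_2^{2}+\beta_0^{2}=\tfrac58$, $\hat b_2=2\beta_0\beta_2=\tfrac38$, matching the stated mask. As a consistency check I would verify $\hat a_1+\hat a_2+\hat a_3=1$ and $\hat b_1+\hat b_2=1$, the partition-of-unity (affine invariance) condition inherited from $\beta_0+\beta_2=1$.

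The only real difficulty is bookkeeping: although the $m=0$ formula is tiny, one must track which half of each rule carries the $-1$ shift, because a single misaligned subscript would redistribute weight among $g_{\varphi-1},g_{\varphi},g_{\varphi+1}$ and silently produce a different mask that still sums to one. I would therefore cross-check the five numbers against a direct computation: compose the Chaikin refinement (\ref{consta40}) with itself and re-express the result through the four-rule form of Lemma \ref{lem1}, which must reproduce exactly the same $\hat a_i,\hat b_i$, since the quaternary scheme is nothing but the self-composition (square) of the binary operator.
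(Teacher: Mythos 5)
Your proposal is correct and follows essentially the same route as the paper: specialize the general scheme (\ref{const75a}) to $m=0$ (so only $\lambda=0$ and $\alpha\in\{0,1\}$ survive), substitute the Chaikin mask $\beta_0=\tfrac14$, $\beta_2=\tfrac34$, and read off $\hat a_1=\beta_0\beta_2=\tfrac{3}{16}$, $\hat a_2=\beta_2^2+\beta_0\beta_2=\tfrac34$, $\hat a_3=\beta_0^2=\tfrac{1}{16}$, $\hat b_1=\beta_0^2+\beta_2^2=\tfrac58$, $\hat b_2=2\beta_0\beta_2=\tfrac38$, exactly as in (\ref{consta41}). Your added sanity checks (partition of unity, and recovering the same mask by composing the Chaikin refinement with itself via Lemma \ref{lem1}) are consistent with, and indeed restate, the construction underlying Theorem \ref{thm-odd-2}, so they strengthen rather than alter the argument.
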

\begin{figure}[h]
	\begin{tabular}{cccc}
		\epsfig{file=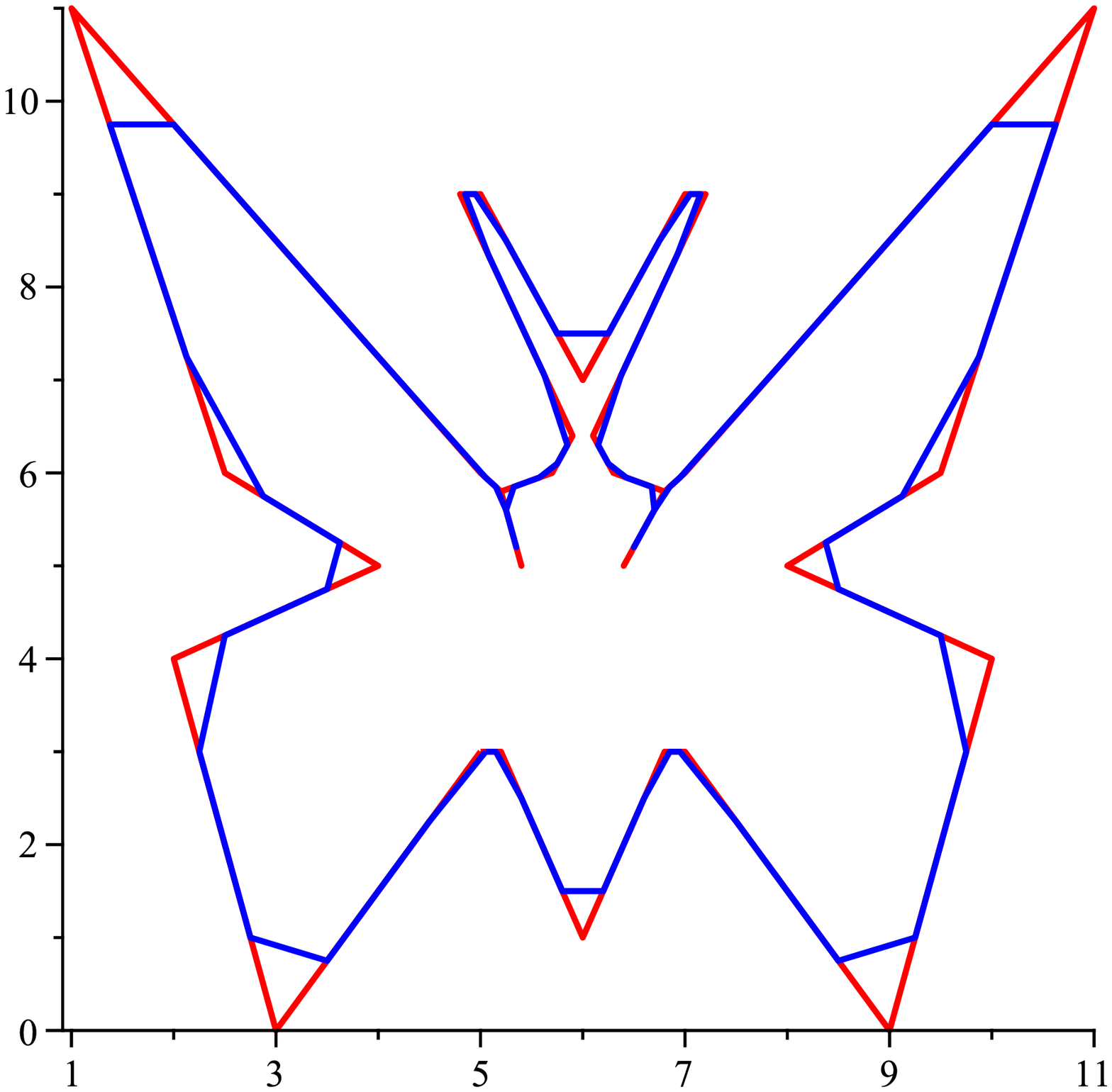, width=1.5 in} & \epsfig{file=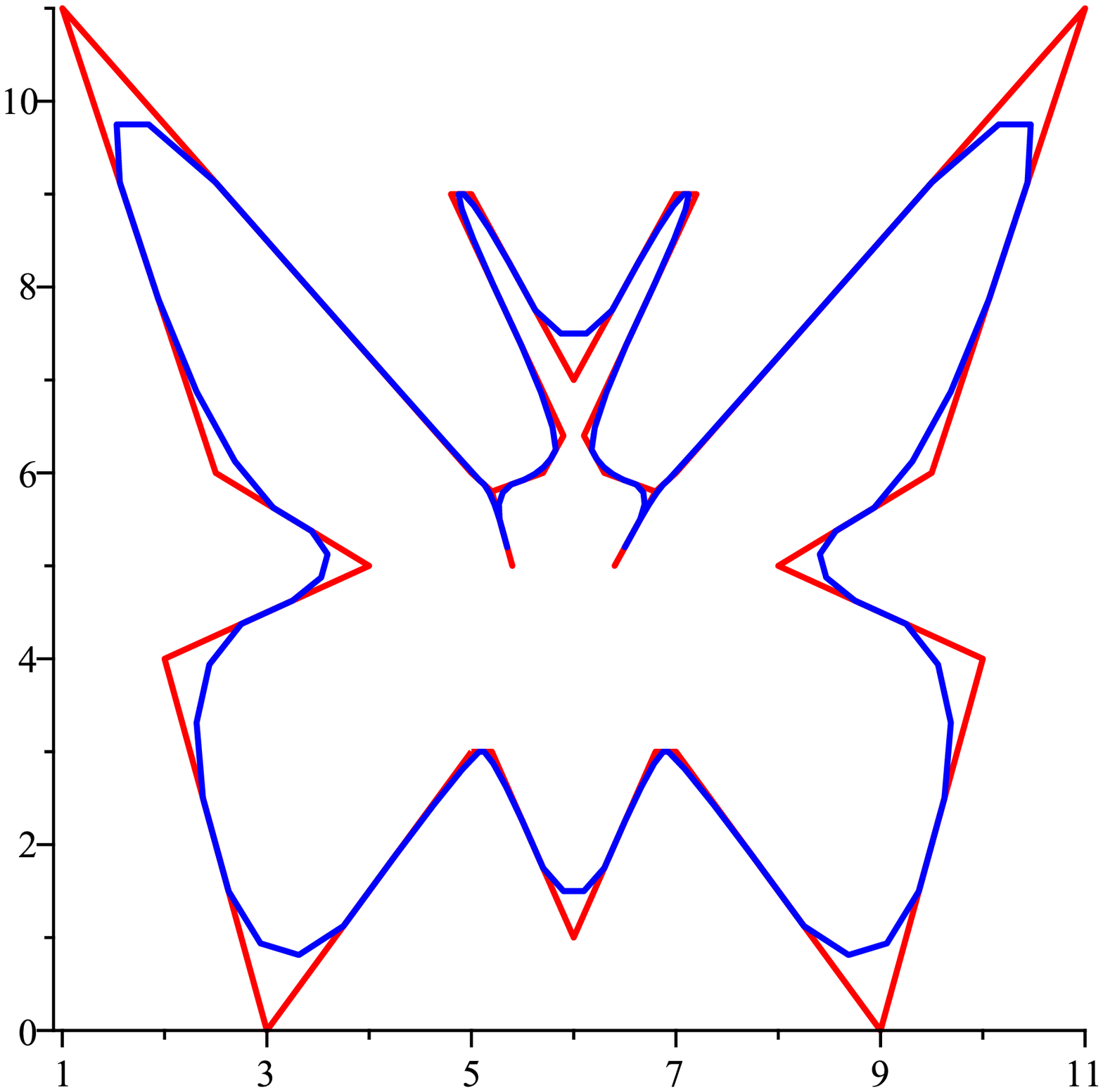, width=1.5 in} & \epsfig{file=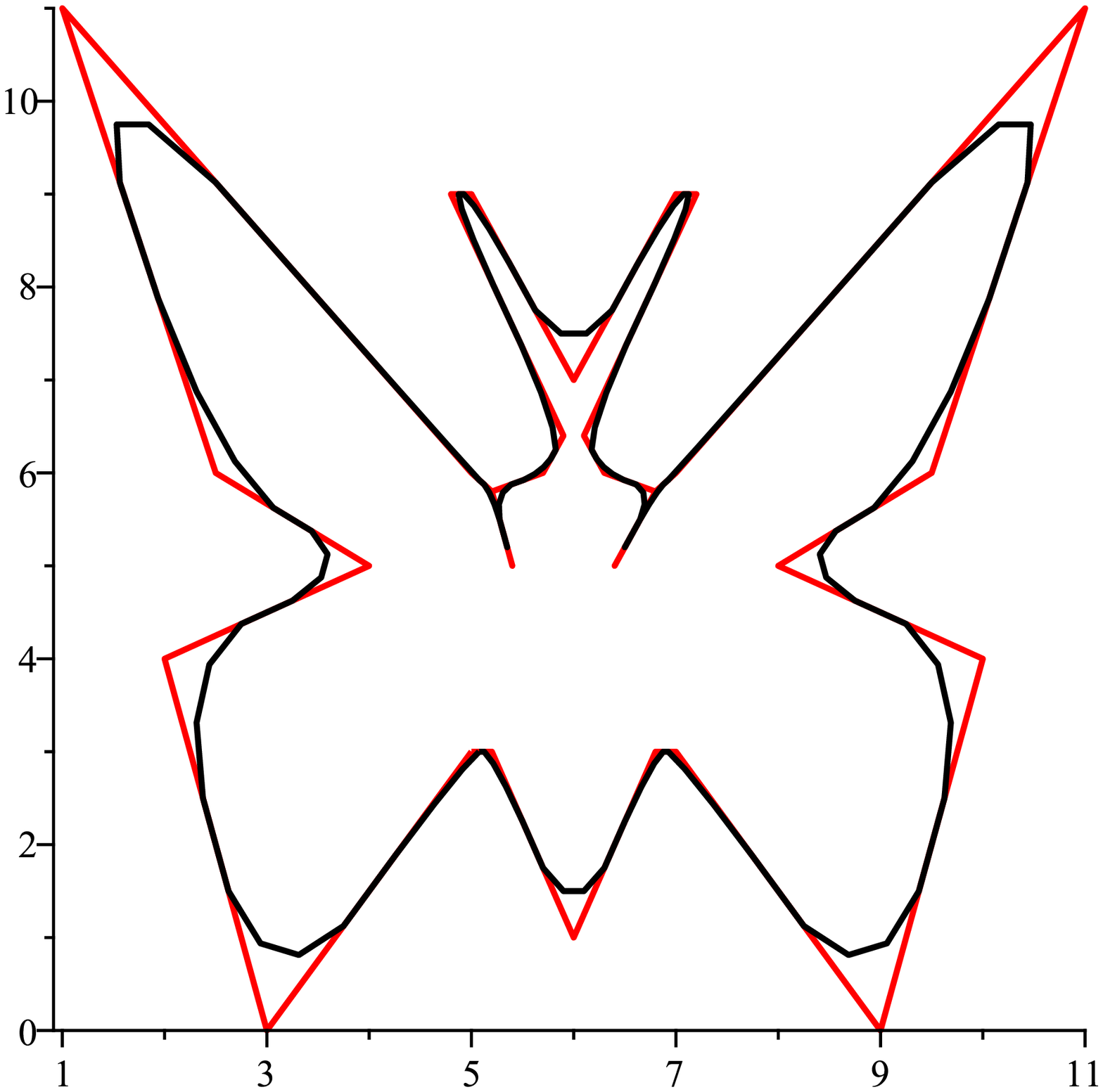, width=1.5 in} & \epsfig{file=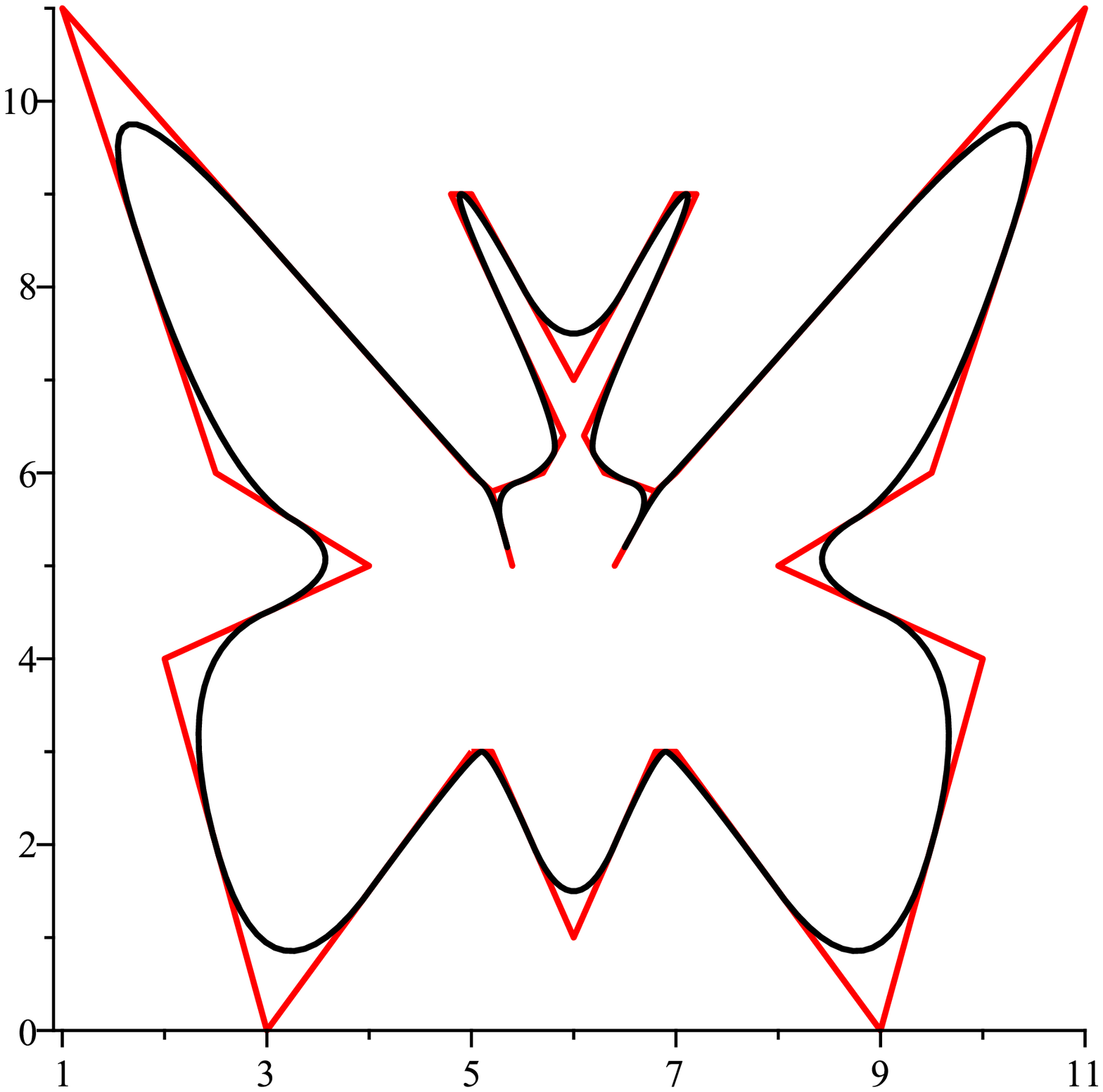, width=1.5 in} \\
		(a) One SS by BSS &(b) One SS by QSS & (c) Two SSs by BSS & (d) Two SSs by QSS
	\end{tabular}
	\caption{\label{w4}\emph{Curves generated by the binary and quaternary subdivision schemes (\ref{consta40}) and (\ref{consta41}) respectively.}}
\end{figure}
Figure \ref{w4} shows the two dimensional shapes fitted by the $2$-point binary subdivision scheme (\ref{consta40})  and the $2$-point relaxed quaternary subdivision scheme  (\ref{consta41}). This figure shows the speedy convergence of the quaternary subdivision scheme as compare to the binary subdivision scheme.
\begin{cor}
	Let ${g}_{\varphi}^{k}:$  $\varphi \in \mathbb{Z}$ be the control points at the $k$-th subdivision step and ${g}_{\varphi}^{k+1}:$  $\varphi \in \mathbb{Z}$ be the refined data/points at the $(k + 1)$-th refinement step. If we use $m=1$ in the $(2m+2)$-point binary subdivision scheme (\ref{const1a}), we get the generalized form of the following $6$-point binary subdivision scheme:
	\begin{eqnarray}\label{consta42}
		\left\{\begin{array}{ccccc}
			{g}_{2\varphi-1}^{k+1}&=&\beta_{-4}\,\ g_{\varphi-3}^{k}+\beta_{-2}\,\ g_{{\varphi-2}}^{k}+\beta_{0}\,\ g_{{\varphi-1}}^{k}+\beta_{2}\,\ g_{{\varphi}}^{k}+\beta_{4}\,\ g_{\varphi+1}^{k}+\beta_{6}\,\ g_{\varphi+2}^{k},\\ \\
			{g}_{2\varphi}^{k+1}&=&\beta_{6}\,\ g_{{\varphi-2}}^{k}+\beta_{4}\,\ g_{{\varphi-1}}^{k}+\beta_{2}\,\ g_{{\varphi}}^{k}+\beta_{0}\,\ g_{\varphi+1}^{k}+\beta_{-2}\,\ g_{\varphi+2}^{k}+\beta_{-4}\,\ g_{\varphi+3}^{k}.
		\end{array}\right.
	\end{eqnarray}
	After comparing $6$-point binary subdivision scheme (\ref{consta42}) with $6$-point approximating subdivision scheme  by \cite{Siddiqi2}, we get the values of following 6 unknowns:
	\begin{eqnarray}\label{a46}
		\left\{\begin{array}{ccccc}
			\beta_{-4}={\frac {1}{122880}}, \,\  \beta_{-2}={\frac {3119}{122880}}, \,\ \beta_{0}={\frac {6719}{20480}}, \,\ \beta_{2}={\frac {31927}{61440}},\,\
			\beta_{4}={\frac {15349}{122880}}, \,\ \beta_{6}={\frac {81}{40960}}.
		\end{array}\right.
	\end{eqnarray}
	Now if $m=1$, the coefficients of the control points in the $8$-point relaxed quaternary subdivision scheme (\ref{const75a}) are:
	\begin{eqnarray}\label{beta-r-1}
	\nonumber	\check{a}_{1}&=&\beta_{6}\beta_{-4}, \,\ \,\ \check{a}_{2}=\beta_{6}\beta_{-2}+\beta_{2}\beta_{-4}+\beta_{4}\beta_{6}, \,\ \,\ \check{a}_{3}=\beta_{6}\beta_{0}+\beta_{2}\beta_{-2}+\beta_{-2}\beta_{-4}+\beta_{4}^{2}+\beta_{0}\beta_{6},\\&& \nonumber\check{a}_{4}=\beta_{6}\beta_{2}+\beta_{2}\beta_{0}+\beta_{-2}^{2}+\beta_{4}\beta_{2}+\beta_{0}\beta_{4}+\beta_{-4}
	\nonumber	\beta_{6}, \,\ \,\ \check{a}_{5}=\beta_{6}\beta_{4}+\beta_{2}^{2}+\beta_{-2}\beta_{0}+\beta_{4}\beta_{0}\\&&+\beta_{0}\beta_{2}
		+\beta_{-4}\beta_{4}, \,\ \,\ \check{a}_{6}=\beta_{6}^{2}+\beta_{2}\beta_{4}+\beta_{-2}\beta_{2}+\beta_{4}\beta_{-2}+\beta_{0}^{2}
	\nonumber	+\beta_{-4}\beta_{2}, \,\ \,\ \check{a}_{7}=\beta_{2}\beta_{6}+\\&&\beta_{-2}\beta_{4}+\beta_{4}\beta_{-4}+\beta_{0}\beta_{-2}
	\nonumber	+\beta_{-4}\beta_{0}, \,\ \,\ \check{a}_{8}=\beta_{-2}\beta_{6}+\beta_{0}\beta_{-4}+\beta_{-4}\beta_{-2}, \,\ \,\ \check{a}_{9}=\beta_{-4}^{2}\\&&
		\check{b}_{1}=\beta_{6}^{2}+\beta_{4}\beta_{-4}, \,\ \,\ \check{b}_{2}=\beta_{6}\beta_{4}+\beta_{2}\beta_{6}+
		\beta_{4}\beta_{-2}+\beta_{0}\beta_{-4}, \,\ \,\ \check{b}_{3}=\beta_{6}\beta_{2}+\beta_{2}\beta_{4}+
	\nonumber	\beta_{-2}\beta_{6}\\&&+\beta_{4}\beta_{0}+\beta_{0}\beta_{-2}+\beta_{-4}^{2}, \,\ \,\ \check{b}_{4}=\beta_{6}\beta_{0}+
	\nonumber	\beta_{2}^{2}+\beta_{-2}\beta_{4}+\beta_{4}\beta_{2}+\beta_{0}^{2}+\beta_{-4}\beta_{-2}, \,\ \,\  \check{b}_{5}=\beta_{6}\\&&\times
		\beta_{-2}+\beta_{2}\beta_{0}+\beta_{-2}\beta_{2}+\beta_{4}^{2}+\beta_{0}\beta_{2}+\beta_{-4}\beta_{0}, \,\ \,\      \nonumber  \check{b}_{6}=\beta_{6}\beta_{-4}+\beta_{2}\beta_{-2}+\beta_{-2}\beta_{0}+\beta_{4}\\&&\times\beta_{6}+\beta_{0}\beta_{4}+\beta_{-4}
	\nonumber	\beta_{2}, \,\ \,\ \check{b}_{7}=\beta_{2}\beta_{-4}+\beta_{-2}^{2}+\beta_{0}\beta_{6}+\beta_{-4}\beta_{4}, \,\ \,\ \check{b}_{8}=\beta_{-2}\beta_{-4}+\beta_{-4}\beta_{6}.\\
	\end{eqnarray}
	By using the values of $\beta_{-4}, \beta_{-2}, \beta_{0}, \beta_{2}, \beta_{4}$, $\beta_{6}$ from (\ref{a46}) into the (\ref{beta-r-1})  and after simplification, we get the following quaternary approximating subdivision scheme:
	\begin{eqnarray}\label{consta43}
		\left\{\begin{array}{ccccc}
			{g}_{4\varphi-2}^{k+1}&=&\check{a}_{1}g_{\varphi-4}^{k}+\check{a}_{2}g_{\varphi-3}^{k}+\check{a}_{3}g_{\varphi-2}^{k}
			+\check{a}_{4}g_{\varphi-1}^{k}+\check{a}_{5}g_{\varphi}^{k}+\check{a}_{6}g_{\varphi+1}^{k}+\check{a}_{7}
			g_{\varphi+2}^{k}+\check{a}_{8}g_{\varphi+3}^{k}+\check{a}_{9}g_{\varphi+4}^{k},\\ \\
			{g}_{4\varphi-1}^{k+1}&=&\check{a}_{9}g_{\varphi-4}^{k}+\check{a}_{8}g_{\varphi-3}^{k}+\check{a}_{7}g_{\varphi-2}^{k}
			+\check{a}_{6}g_{\varphi-1}^{k}+\check{a}_{5}g_{\varphi}^{k}+\check{a}_{4}g_{\varphi+1}^{k}+\check{a}_{3}
			g_{\varphi+2}^{k}+\check{a}_{2}g_{\varphi+3}^{k}+\check{a}_{1}g_{\varphi+4}^{k},\\ \\
			{g}_{4\varphi}^{k+1}&=&\check{b}_{1}g_{\varphi-3}^{k}+\check{b}_{2}g_{\varphi-2}^{k}+\check{b}_{3}g_{\varphi-1}^{k}
			+\check{b}_{4}g_{\varphi}^{k}+\check{b}_{5}g_{\varphi+1}^{k}+\check{b}_{6}g_{\varphi+2}^{k}+\check{b}_{7}
			g_{\varphi+3}^{k}+\check{b}_{8}g_{\varphi+4}^{k},\\ \\
			{g}_{4\varphi+1}^{k+1}&=&\check{b}_{8}g_{\varphi-3}^{k}+\check{b}_{7}g_{\varphi-2}^{k}+\check{b}_{6}g_{\varphi-1}^{k}
			+\check{b}_{5}g_{\varphi}^{k}+\check{b}_{4}g_{\varphi+1}^{k}+\check{b}_{3}g_{\varphi+2}^{k}+\check{b}_{2}
			g_{\varphi+3}^{k}+\check{b}_{1}g_{\varphi+4}^{k},
		\end{array}\right.
	\end{eqnarray}
	where
	\begin{eqnarray}\label{a47}
		\left\{\begin{array}{ccccc}
			\check{a}_{1}&=&{\frac{27}{1677721600}}, \,\ \,\ \check{a}_{2}={\frac{2275789}{7549747200}}, \,\ \,\ \check{a}_{3}={\frac{9086963}{301989888}}, \,\ \,\
			\check{a}_{4}={\frac{699721619}{2516582400}}, \,\  \check{a}_{5}={\frac{369990379}{754974720}}, \\ \\ \check{a}_{6}&=&{\frac{1426235351}{7549747200}}, \,\ \,\ \check{a}_{7}={\frac{31530847}{2516582400}}, \,\ \,\ \check{a}_{8}={\frac{16027}{301989888}}, \,\  \check{a}_{9}={\frac{1}{15099494400}}
			\check{b}_{1}={\frac{37199}{7549747200}}, \\ \\ \check{b}_{2}&=&{\frac{33580087}{7549747200}}, \,\  \check{b}_{3}={\frac{290148073}{2516582400}}, \,\ \check{b}_{4}={\frac{674031991}{1509949440}}, \,\ \,\ \check{b}_{5}={\frac{558397097}{1509949440}}, \,\ \,\ \check{b}_{6}={\frac{157912247}{2516582400}}, \\ \\ \check{b}_{7}&=&{\frac{9801833}{7549747200}}, \,\ \,\ \check{b}_{8}={\frac{1681}{7549747200}}.
		\end{array}\right.
	\end{eqnarray}
\end{cor}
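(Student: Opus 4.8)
The plan is to treat this corollary as a concrete specialization of Theorem \ref{thm-odd-2} at $m=1$ (so that $n=2m+1=3$), followed by an exact rational computation. First I would instantiate the general $(4m+2)$-point binary scheme (\ref{const1a}) at $m=1$: the summation index $\lambda$ then runs over $\{-2,-1,0,1,2,3\}$, so each of the two binary rules $g_{2\varphi-1}=\sum_\lambda\beta_{2\lambda}g_{\varphi+\lambda-1}$ and $g_{2\varphi}=\sum_\lambda\beta_{2-2\lambda}g_{\varphi+\lambda}$ collapses into a six-term sum in the even-subscripted coefficients $\beta_{-4},\beta_{-2},\beta_{0},\beta_{2},\beta_{4},\beta_{6}$; written out, these are precisely the two rows of the $6$-point scheme (\ref{consta42}). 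Matching this template against the published $6$-point approximating scheme of \cite{Siddiqi2} then fixes the mask as in (\ref{a46}). Before proceeding I would verify admissibility, namely that $\beta_{-4}+\beta_{-2}+\beta_{0}+\beta_{2}+\beta_{4}+\beta_{6}=1$ (both binary rules reduce to this single normalization because of the dual/palindromic structure), which confirms the convergence condition recorded in Section \ref{Introduction}.

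Next I would specialize the quaternary formula (\ref{const75a}) of Theorem \ref{thm-odd-2} to $m=1$. Here the outer index $\lambda$ ranges over $\{-1,0,1\}$ and the inner index $\alpha$ over $\{-2,-1,0,1,2,3\}$, so each of the four double sums expands into eighteen bilinear products of the $\beta$'s. The essential bookkeeping step is to collect these products according to the subscript $\varphi+\alpha+\lambda$ (respectively $\varphi+\alpha+\lambda+1$) of the control point they multiply; grouping like terms in this way converts the double sums into single sums supported on $9$ points for the rules $g^{k+1}_{4\varphi-2},g^{k+1}_{4\varphi-1}$ and on $8$ points for $g^{k+1}_{4\varphi},g^{k+1}_{4\varphi+1}$, exactly the support sizes $6m+3$ and $6m+2$ predicted by Theorem \ref{thm-odd-1} at $m=1$. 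The coefficients produced by this collection are the quadratic expressions $\check{a}_{1},\ldots,\check{a}_{9}$ and $\check{b}_{1},\ldots,\check{b}_{8}$ displayed in (\ref{beta-r-1}); along the way I would confirm the palindromic pairing (the rule for $g_{4\varphi-1}$ is the index-reversal of the rule for $g_{4\varphi-2}$, and likewise for the pair $g_{4\varphi+1},g_{4\varphi}$) that lets (\ref{consta43}) be stated with the same coefficient symbols in reverse order.

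Finally I would substitute the explicit rational values (\ref{a46}) into every bilinear expression in (\ref{beta-r-1}) and carry out the exact arithmetic over the common denominator, arriving at the closed-form fractions (\ref{a47}) and hence the scheme (\ref{consta43}). As a consistency check I would confirm that the coefficients of each quaternary rule again sum to $1$: since each rule factors as a product of a partial sum of $\beta$'s with a full mask sum (which equals $1$ by the normalization above), this follows automatically from the affine invariance of the construction, so the derived $8$-point relaxed quaternary scheme is itself convergent. The main obstacle is not conceptual---Theorem \ref{thm-odd-2} already supplies the general formula---but purely computational: one must expand the $m=1$ double sums without index errors and perform the exact rational arithmetic with the large denominators appearing in (\ref{a46}) and (\ref{a47}). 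Careful tracking of the $\alpha+\lambda$ shifts when collecting like terms is the place where mistakes are most likely to occur.
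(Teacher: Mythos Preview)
Your proposal is correct and follows essentially the same route as the paper: specialize (\ref{const1a}) and (\ref{const75a}) at $m=1$, read off the $\beta$-values from \cite{Siddiqi2}, expand and collect the resulting bilinear double sums into the coefficients (\ref{beta-r-1}), and then perform the exact rational arithmetic to obtain (\ref{a47}). The additional normalization and palindromy checks you propose are not in the paper but are reasonable sanity checks; note only the small indexing slip that the control-point subscripts in (\ref{const75a}) are $\varphi+\alpha+\lambda-1$ and $\varphi+\alpha+\lambda$, not $\varphi+\alpha+\lambda$ and $\varphi+\alpha+\lambda+1$.
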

\begin{figure}[h]
	\begin{tabular}{cccc}
		\epsfig{file=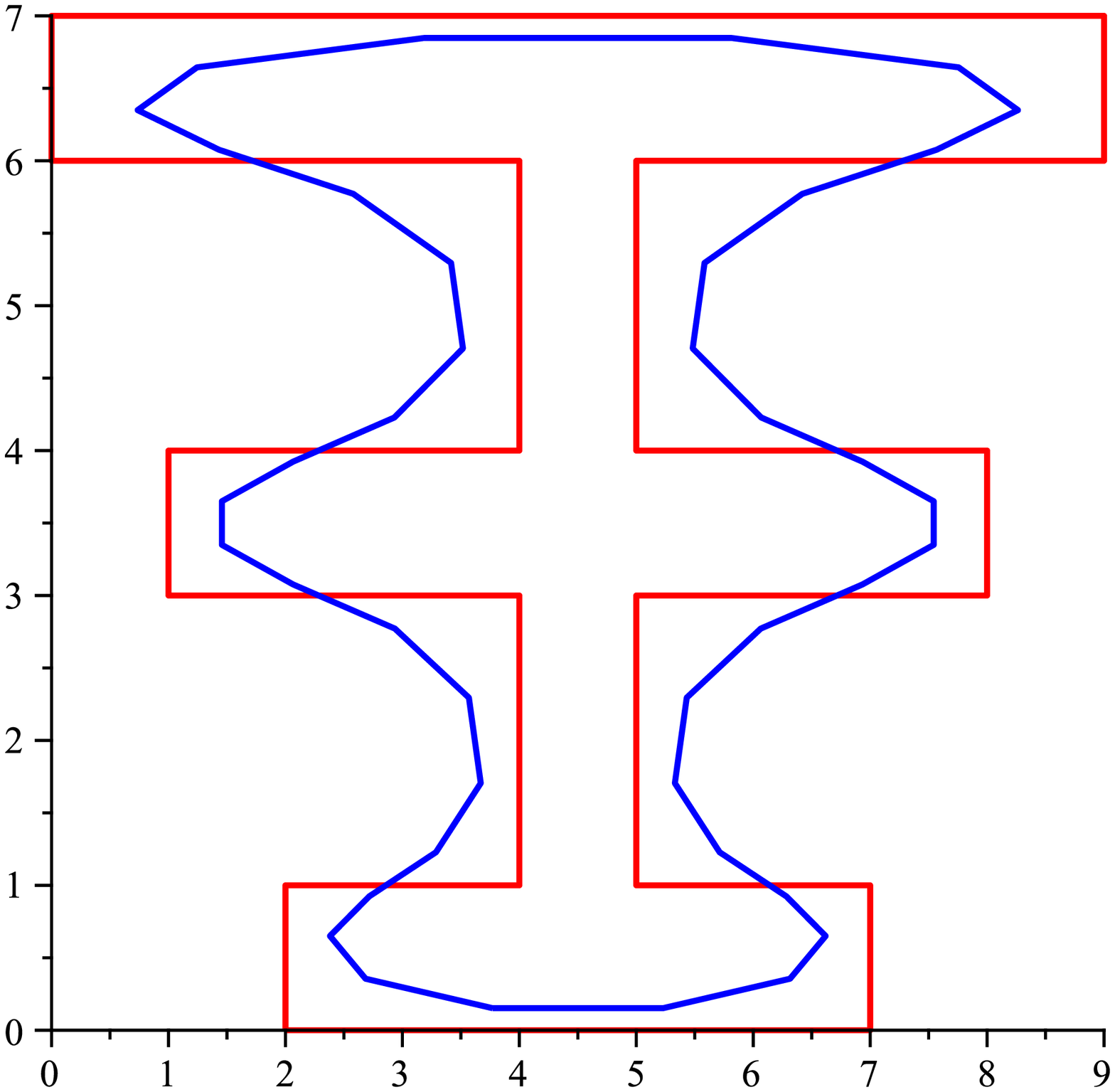, width=1.5 in} & \epsfig{file=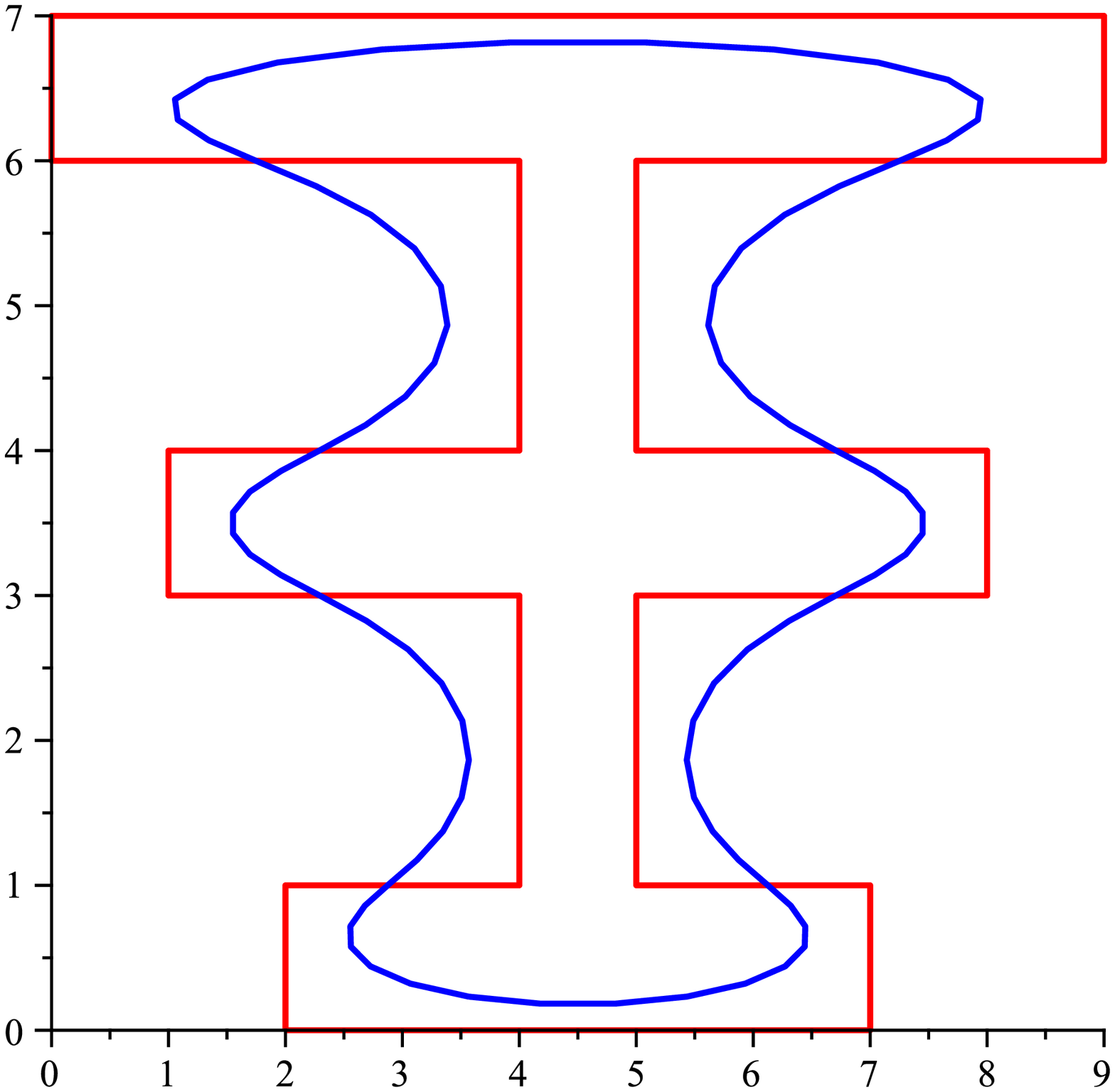, width=1.5 in} & \epsfig{file=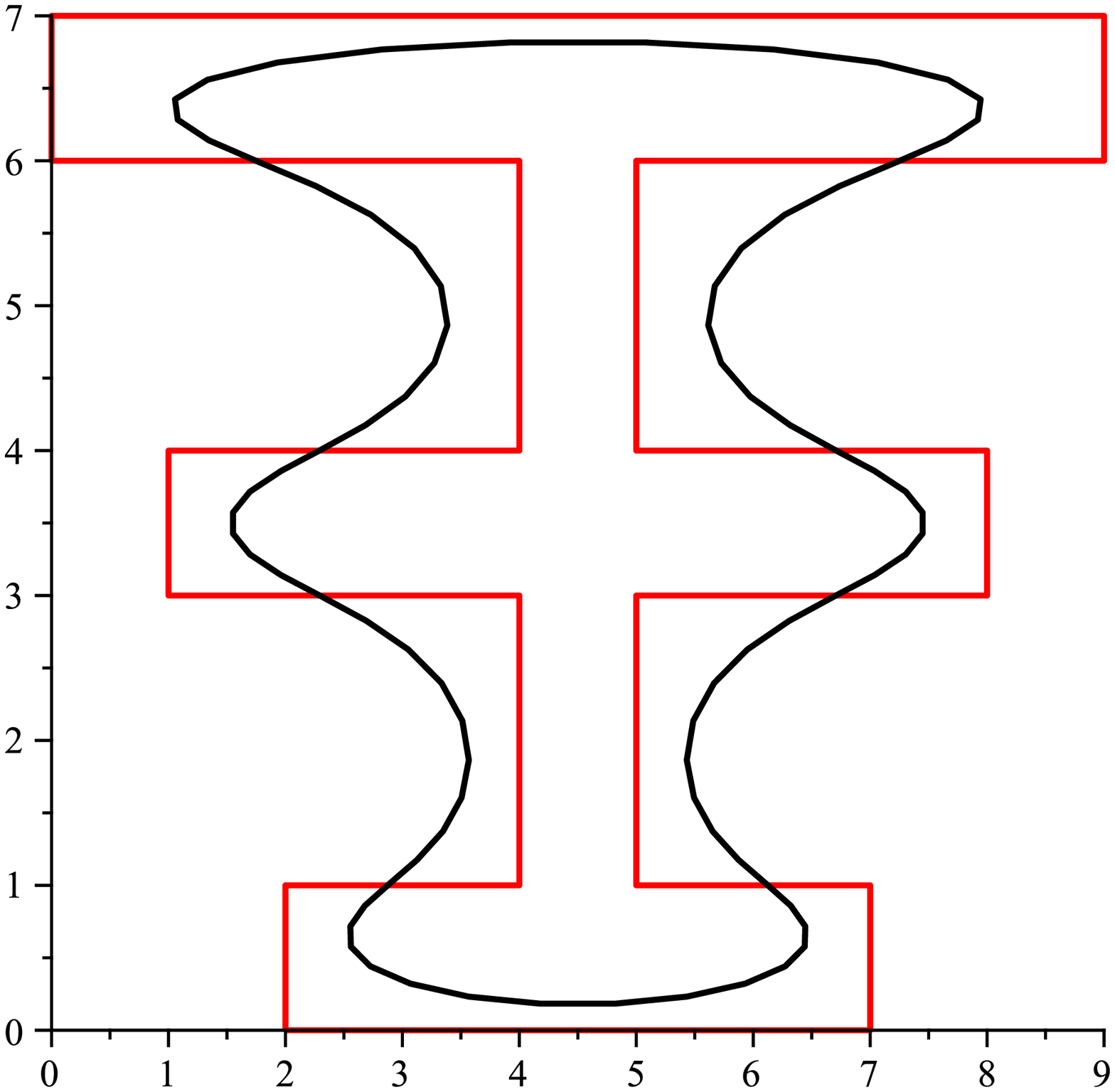, width=1.5 in} & \epsfig{file=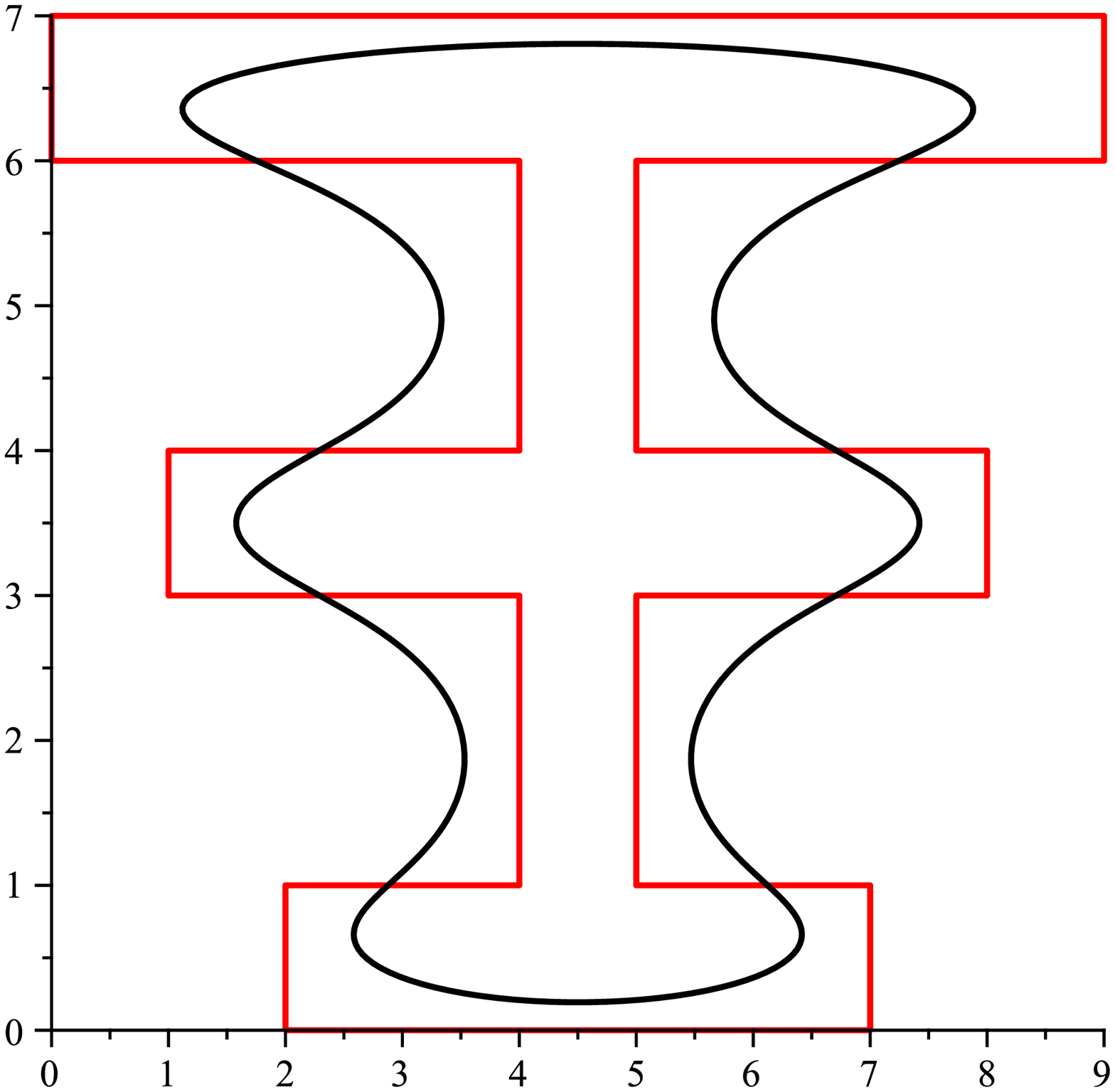, width=1.5 in} \\
		(a) One SS by BSS &(b) One SS by QSS & (c) Two SSs by BSS & (d) Two SSs by QSS
	\end{tabular}
	\caption{\label{w5}\emph{Curves generated by the binary and quaternary subdivision schemes (\ref{consta42}) and (\ref{consta43}) respectively.}}
\end{figure}
The graphical results of the $6$-point binary subdivision schemes (\ref{consta42}) and the $8$-point relaxed quaternary approximating subdivision schemes (\ref{consta43}) are reviewed in Figure \ref{w5}. It is clear that $8$-points relaxed quaternary approximating subdivision scheme uses less iterations for smoothness comparatively to its corresponding $6$-points binary subdivision scheme.
\begin{cor}\label{cor-6}
	The binary subdivision scheme (\ref{const1a}) for $m=2$ reduces to an approximating subdivsion scheme whose each subdivision rule use the linear combination of $10$ control points of subdivision level $k$ to get a control point at subdivision level $k+1$.  So the general form of this scheme is:
	\begin{eqnarray}\label{const76}
		\left\{\begin{array}{cccc}
			{g}_{2\varphi-1}^{k+1}&=&\beta_{-8}\,\ g_{\varphi-5}^{k}+\beta_{-6}\,\ g_{\varphi-4}^{k}+
			\beta_{-4}\,\ g_{\varphi-3}^{k}+\beta_{-2}\,\ g_{{\varphi-2}}^{k}+\beta_{0}\,\ g_{{\varphi-1}}^{k}\\&&+\beta_{2}\,\ g_{{\varphi}}^{k}+\beta_{4}\,\ g_{{\varphi+1}}^{k}+\beta_{6}\,\ g_{{\varphi+2}}^{k}+\beta_{8}\,\ g_{\varphi+3}^{k}+\beta_{10}\,\ g_{{\varphi+4}}^{k},\\
			{g}_{2\varphi}^{k+1}&=&\beta_{10}\,\ g_{\varphi-4}^{k}+\beta_{8}\,\ g_{\varphi-3}^{k}+
			\beta_{6}\,\ g_{\varphi-2}^{k}+\beta_{4}\,\ g_{{\varphi-1}}^{k}+\beta_{2}\,\ g_{{\varphi}}^{k}+\beta_{0}\,\ g_{\varphi+1}^{k}\\&&+\beta_{-2}\,\ g_{\varphi+2}^{k}+\beta_{-4}\,\ g_{\varphi+3}^{k}+\beta_{-6}\,\ g_{\varphi+4}^{k}+\beta_{-8}\,\ g_{\varphi+5}^{k}.
		\end{array}\right.
	\end{eqnarray}
	To get the values of 10 unknowns $\beta_{-8}, \beta_{-6} \ldots  \beta_{8}$, $\beta_{10}$, we  uses the $10$-point binary subdivision scheme presented by  \cite{Siddiqi2} so that the control points at level $k+1$ in (\ref{const76}) become the convex combination of the control points of level $k$. Hence the values of unknowns are:
	\begin{eqnarray}\label{const77}
		\left\{\begin{array}{cccc}
			\beta_{-8}={\frac {12155}{33554432}}, \,\ \,\ \beta_{-6}={\frac {-138567}{33554432}}, \,\ \,\
			\beta_{-4}={\frac {188955}{8388608}},  \,\ \,\ \beta_{-2}={\frac {-692835}{8388608}}, \,\ \,\ \beta_{0}={\frac {4849845}{16777216}}, \\ \\ \beta_{2}={\frac {14549535}{16777216}}, \,\ \,\ \beta_{4}={\frac {-969969}{8388608}}, \,\ \,\ \beta_{6}={\frac {230945}{8388608}}, \,\ \,\  \beta_{8}={\frac {-159885}{33554432}}, \,\ \,\ \beta_{10}={\frac {13585}{33554432}}.
		\end{array}\right.
	\end{eqnarray}
Now we simplify  (\ref{const75a}) for $m=2$ and get the general form of the $14$-point relaxed quaternary subdivision scheme. Now we use the unknowns from (\ref{const77})  into (\ref{const75a}) and get the following quaternary subdivision scheme which is relaxed bacasue its two subdivision rules are the convex combination of 14 control points of level $k$ whereas the remaining two subdivision  rules are the convex combination of 15 control points of level $k$.
	\begin{eqnarray}\label{const80}
		\left\{\begin{array}{cccc}
			{g}_{4\varphi-2}^{k+1}&=&\tilde{a}_{1}g_{\varphi-7}^{k}+\tilde{a}_{2}g_{\varphi-6}^{k}+\tilde{a}_{3}g_{\varphi-5}^{k}+\tilde{a}_{4} g_{\varphi-4}^{k}+\tilde{a}_{5}g_{\varphi-3}^{k}+\tilde{a}_{6}g_{\varphi-2}^{k}+\tilde{a}_{7}g_{\varphi-1}^{k}+\tilde{a}_{8}
			g_{\varphi}^{k}\\&&+\tilde{a}_{9}g_{\varphi+1}^{k}+\tilde{a}_{10}g_{\varphi+2}^{k}+\tilde{a}_{11}g_{\varphi+3}^{k}+\tilde{a}_{12}
			g_{\varphi+4}^{k}+\tilde{a}_{13}g_{\varphi+5}^{k}+\tilde{a}_{14}g_{\varphi+6}^{k}+\tilde{a}_{15}g_{\varphi+7}^{k},\\ \\
			{g}_{4\varphi-1}^{k+1}&=&\tilde{a}_{15}g_{\varphi-7}^{k}+\tilde{a}_{14}g_{\varphi-6}^{k}+\tilde{a}_{13} g_{\varphi-5}^{k}+\tilde{a}_{12}g_{\varphi-4}^{k}+\tilde{a}_{11} g_{\varphi-3}^{k}+\tilde{a}_{10}g_{\varphi-2}^{k}+ \tilde{a}_{9}g_{\varphi-1}^{k}+\\&&\tilde{a}_{8}g_{\varphi}^{k}+
			\tilde{a}_{7}g_{\varphi+1}^{k}+\tilde{a}_{6}g_{\varphi+2}^{k}+ \tilde{a}_{5}g_{\varphi+3}^{k}+\tilde{a}_{4}g_{\varphi+4}^{k}+
			\tilde{a}_{3}g_{\varphi+5}^{k}+\tilde{a}_{2}g_{\varphi+6}^{k}+\tilde{a}_{1}g_{\varphi+7}^{k},\\ \\
			{g}_{4\varphi}^{k+1}&=&\tilde{b}_{1}g_{\varphi-6}^{k}+\tilde{b}_{2}g_{\varphi-5}^{k}+\tilde{b}_{3}g_{\varphi-4}^{k}
			+\tilde{b}_{4}g_{\varphi-3}^{k}
			+\tilde{b}_{5}g_{\varphi-2}^{k}+\tilde{b}_{6}g_{\varphi-1}^{k}+\tilde{b}_{7}g_{\varphi}^{k}+\tilde{b}_{8}
			g_{\varphi+1}^{k}\\&& +
			\tilde{b}_{9}g_{\varphi+2}^{k}+\tilde{b}_{10}g_{\varphi+3}^{k}+\tilde{b}_{11}g_{\varphi+4}^{k}+
			\tilde{b}_{12}g_{\varphi+5}^{k}+\tilde{b}_{13}g_{\varphi+6}^{k}+\tilde{b}_{14}g_{\varphi+7}^{k},\\ \\
			{g}_{4\varphi+1}^{k+1}&=&\tilde{b}_{14}g_{\varphi-6}^{k}+\tilde{b}_{13}g_{\varphi-5}^{k}+\tilde{b}_{12}
			g_{\varphi-4}^{k}+\tilde{b}_{11}
			g_{\varphi-3}^{k}+\tilde{b}_{10}g_{\varphi-2}^{k}+\tilde{b}_{9}g_{\varphi-1}^{k}+\tilde{b}_{8}g_{\varphi}^{k}
			+\tilde{b}_{7}\\&&\times g_{\varphi+1}^{k}+\tilde{b}_{6}g_{\varphi+2}^{k}+\tilde{b}_{5}g_{\varphi+3}^{k}+\tilde{b}_{4}g_{\varphi+4}^{k}
			+\tilde{b}_{3}g_{\varphi+5}^{k}
			+\tilde{b}_{2}g_{\varphi+6}^{k}+\tilde{b}_{1}g_{\varphi+7}^{k},
		\end{array}\right.
	\end{eqnarray}
	where
	\begin{eqnarray}\label{a77}
		\left\{\begin{array}{cccc}
			\tilde{a}_{1}&=&{\frac{165125675}{1125899906842624}}, \,\ \,\ \tilde{a}_{2}={\frac{896759435}{140737488355328}}, \,\ \,\ \tilde{a}_{3}={\frac{208816685055}{1125899906842624}},\,\  \tilde{a}_{4}={\frac{-350111003385}{140737488355328}}, \\ \\ \tilde{a}_{5}&=&{\frac{15443267900775}{1125899906842624}}, \,\ \,\ \tilde{a}_{6}={\frac{-845664470365}{17592186044416}},
			\,\  \tilde{a}_{7}={\frac{165267038051115}{1125899906842624}}, \,\  \tilde{a}_{8}={\frac{66176709385215}{70368744177664}}
			, \\ \\ \tilde{a}_{9}&=&{\frac{-67374962898815}{1125899906842624}},\,\  \tilde{a}_{10}={\frac{1418419563275}{140737488355328}}
			, \,\ \,\ \tilde{a}_{11}={\frac{-772276338691}{1125899906842624}}, \,\ \,\ \tilde{a}_{12}={\frac{-17904682965}{140737488355328}},
			\\ \\  \tilde{a}_{13}&=&{\frac{6861145005}{1125899906842624}}, \,\ \,\ \tilde{a}_{14}={\frac{351267345}{70368744177664}}, \,\ \,\ \tilde{a}_{15}={\frac{147744025}{1125899906842624}}
			\,\ \tilde{b}_{1}={\frac{-879424975}{562949953421312}}, \\ \\ \tilde{b}_{2}&=&{\frac{-7313797205}{562949953421312}}, \,\  \tilde{b}_{3}={\frac{198710020223}{281474976710656}},\,\  \tilde{b}_{4}={\frac{-1928438555615}{281474976710656}}, \,\  \tilde{b}_{5}={\frac{20280883755275}{562949953421312}}, \\ \\ \tilde{b}_{6}&=&{\frac{-78511743180975}{562949953421312}},\,\ \tilde{b}_{7}={\frac{105752643189765}{140737488355328}}, \,\ \,\ \tilde{b}_{8}={\frac{63006444414771}{140737488355328}}, \,\ \,\ \tilde{b}_{9}={\frac{-64970510830665}{562949953421312}},\\ \\  \tilde{b}_{10}&=&{\frac{17600920061725}{562949953421312}}, \,\ \,\ \tilde{b}_{11}={\frac{-1671012940025}{281474976710656}}, \,\ \,\ \tilde{b}_{12}=
			{\frac{163784577385}{281474976710656}},\\ \\  \tilde{b}_{13}&=&{\frac{-3080205843}{562949953421312}}, \,\ \,\ \tilde{b}_{14}={\frac{-759578105}{562949953421312}}.
		\end{array}\right.
	\end{eqnarray}
\end{cor}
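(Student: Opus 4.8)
The plan is to derive Corollary~\ref{cor-6} as the $m=2$ specialization of Theorem~\ref{thm-odd-2}, so that the substantive content is the explicit expansion of the general quaternary formula (\ref{const75a}) followed by substitution of the numerical mask. First I would set $n=2m+1=5$ and read off the $2n=10$ point binary scheme: the index set $\lambda=-2m,\dots,2m+1=-4,\dots,5$ yields exactly ten consecutive coefficients $\beta_{-8},\dots,\beta_{10}$, giving the form (\ref{const76}). Comparison with the $10$-point scheme of \cite{Siddiqi2} fixes these ten entries, which are the values recorded in (\ref{const77}); this simultaneously justifies the support claim for the binary rules.

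Next I would carry out the expansion of (\ref{const75a}) for $m=2$. The outer index runs over $\lambda=-2,\dots,2$ (five values) and the inner index over $\alpha=-4,\dots,5$ (ten values), so each of the four quaternary rules is a sum of products $\beta_{i}\beta_{j}$ of the binary mask, confirming that the resulting coefficients are non-linear (in fact quadratic) combinations of the binary coefficients. I would collect these products according to the shifted control index $\varphi+\alpha+\lambda$, $\varphi+\alpha+\lambda-1$, or $\varphi+\alpha+\lambda+1$, depending on which rule is being expanded. Tracking the extreme values of the combined shift then yields the support predicted by Theorem~\ref{thm-odd-1}: the rules for $g_{4\varphi-2}^{k+1}$ and $g_{4\varphi-1}^{k+1}$ span the $6m+3=15$ points $g_{\varphi-7}^{k}\dots g_{\varphi+7}^{k}$, while the rules for $g_{4\varphi}^{k+1}$ and $g_{4\varphi+1}^{k+1}$ span the $6m+2=14$ points $g_{\varphi-6}^{k}\dots g_{\varphi+7}^{k}$, matching (\ref{const80}). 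Because the two rules in each pair are mirror images, it suffices to compute one rule from each pair and obtain its partner by reversing the coefficient order; this is exactly the palindromic labelling $\tilde a_{1}\leftrightarrow\tilde a_{15}$ and $\tilde b_{1}\leftrightarrow\tilde b_{14}$.

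To establish that the scheme is genuinely a refinement rule (the ``convex combination'' assertion), I would verify partition of unity by setting all $g^{k}_{\varphi+\cdot}=1$. Convergence of the binary scheme (\ref{const1a}) forces the full even-indexed mask sum $\sum_{\lambda}\beta_{2\lambda}=1$, whence each inner sum $\sum_{\alpha}\beta_{2\alpha}=\sum_{\alpha}\beta_{2-2\alpha}=1$. For every quaternary rule the two outer sums then run over the complementary residue classes $\beta_{4\lambda}$ (indices $\equiv 0 \bmod 4$) and $\beta_{2\pm4\lambda}$ (indices $\equiv 2\bmod 4$), whose union is again the full even mask; thus each rule evaluates to $1$, so the coefficients $\tilde a_{i}$ and $\tilde b_{i}$ each sum to unity. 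Some coefficients, for instance $\tilde a_{4}$, are negative, so ``convex'' should here be read as ``affine, summing to one''. Finally I would substitute the rational values (\ref{const77}) into the collected quadratic expressions and simplify to reach (\ref{a77}).

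The main obstacle is organizational rather than conceptual: the faithful bookkeeping of the ten-by-five array of products $\beta_{i}\beta_{j}$ and their correct grouping by control index across four rules carrying three different offsets $\alpha+\lambda,\ \alpha+\lambda\pm1$. An off-by-one slip in the shift, or the misplacement of a product when two index pairs contribute to the same control point, is easy to commit, so before substituting the explicit fractions I would cross-check each collected coefficient against the palindromic symmetry and against the partition-of-unity identity derived above.
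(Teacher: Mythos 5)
Your proposal takes essentially the same route as the paper: specialize Theorem \ref{thm-odd-2}, i.e.\ formula (\ref{const75a}), to $m=2$, read off the ten-coefficient binary mask from the scheme of \cite{Siddiqi2} as in (\ref{const77}), expand the double sums over $\lambda=-2,\dots,2$ and $\alpha=-4,\dots,5$, collect products $\beta_i\beta_j$ by control index (only the offsets $\alpha+\lambda-1$ and $\alpha+\lambda$ actually occur in (\ref{const75a}); the $+1$ offset you mention belongs to the even-$n$ formula (\ref{1q})), and substitute to obtain (\ref{const80})--(\ref{a77}), with your supports $g_{\varphi-7},\dots,g_{\varphi+7}$ (15 points) and $g_{\varphi-6},\dots,g_{\varphi+7}$ (14 points) matching Theorem \ref{thm-odd-1} and a spot check such as $\tilde a_1=\beta_{10}\beta_{-8}=165125675/2^{50}$ confirming the bookkeeping. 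Your added verifications (partition of unity via the complementary residue classes $\beta_{4\lambda}$ and $\beta_{2\pm4\lambda}$, the automatic mirror symmetry of each rule pair, and the correct observation that the mask (\ref{const77}) has negative entries so \emph{convex} must be read as \emph{affine, summing to one}) are sound refinements of, not departures from, the paper's argument.
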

\begin{figure}[h]
	\begin{tabular}{cccc}
		\epsfig{file=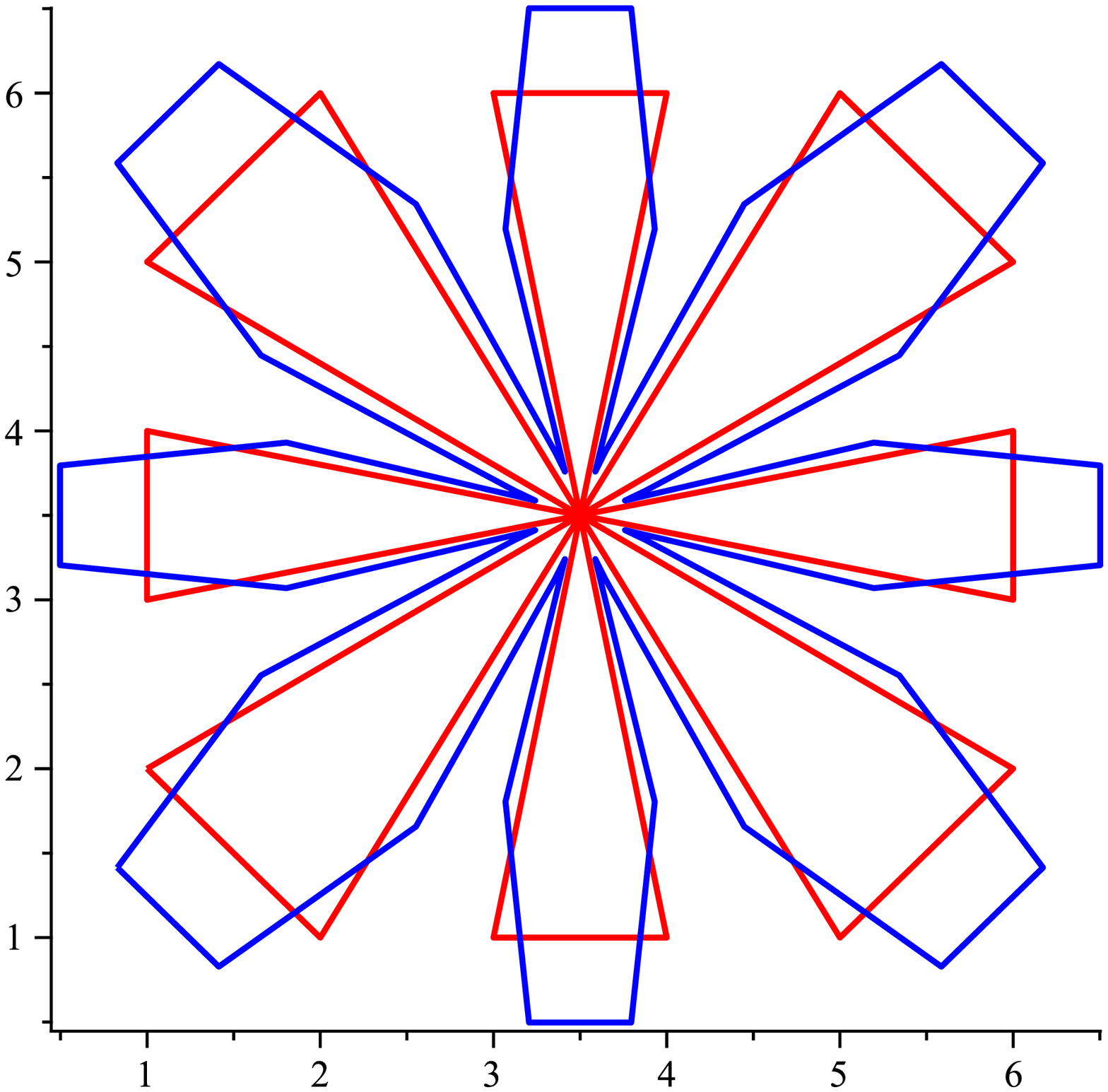, width=1.5 in} & \epsfig{file=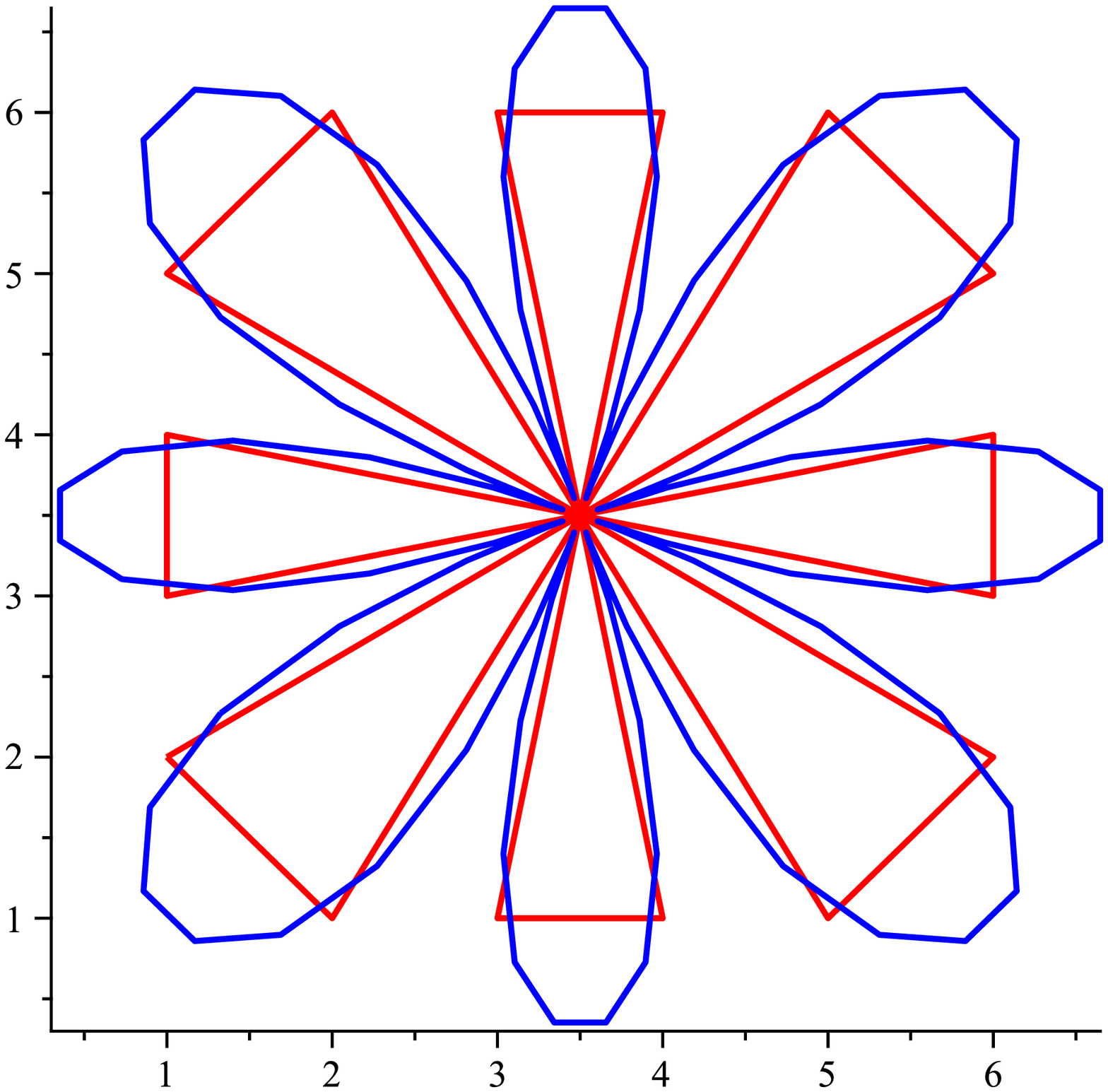, width=1.5 in} & \epsfig{file=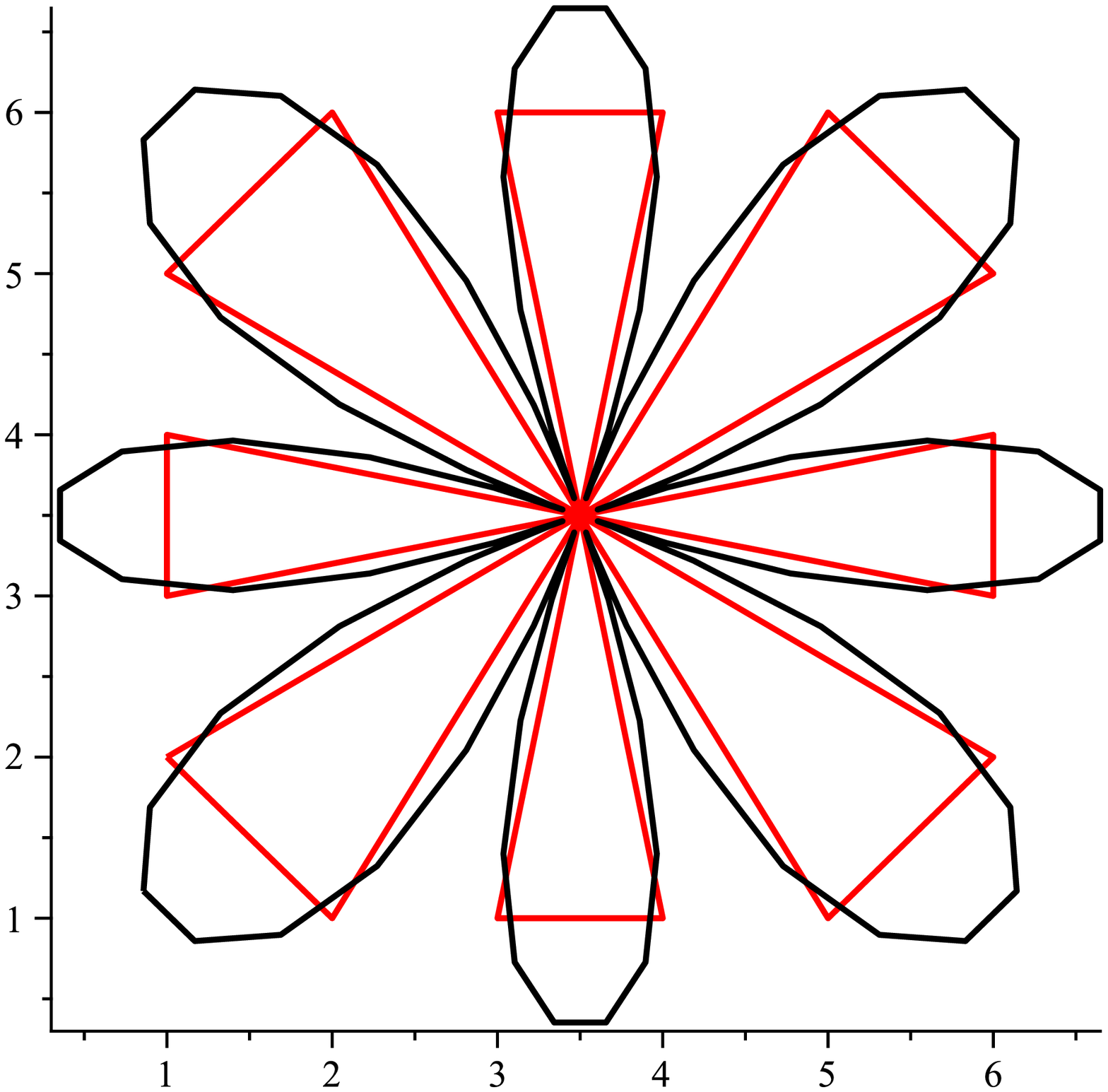, width=1.5 in} & \epsfig{file=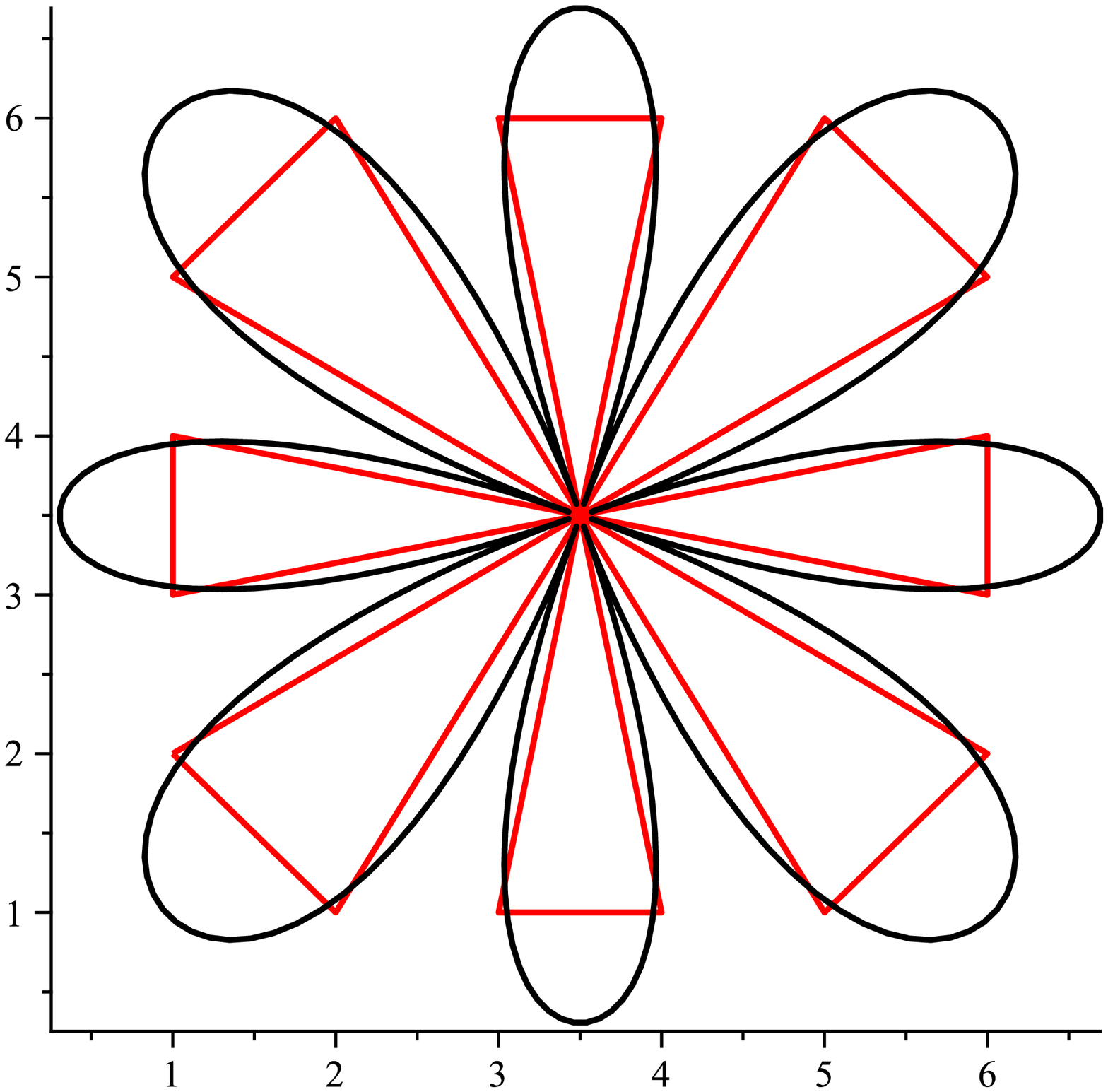, width=1.5 in} \\
		(a) One SS by BSS &(b) One SS by QSS & (c) Two SSs by BSS & (d) Two SSs by QSS
	\end{tabular}
	\caption{\label{w6}\emph{Curves generated by the binary and quaternary subdivision schemes (\ref{const76}) and (\ref{const80}) respectively.}}
\end{figure}
In Figure \ref{w6}, we present the graphical shapes generated by the binary subdivision scheme (\ref{const76}) and the quaternary approximating subdivision scheme (\ref{const80}).  The difference at first two subdivision levels can be visualized clearly from this figure.
\begin{cor}\label{cor-7}
	This corollary also shows the application of Theorem \ref{thm-odd-2} for $m=2$. So the general form  of the $10$-point binary subdivision scheme is same as given in (\ref{const76}) of Corollary \ref{cor-6}  and the values of unknowns can be get by \cite{Siddiqi2}, that are:
	\begin{eqnarray}\label{const82}
		\left\{\begin{array}{cccc}
			\beta_{-8}&=&{\frac {1}{95126814720}}, \,\ \beta_{-6}={\frac {390623}{19025362944}}, \,\
			\beta_{-4}={\frac {13138903}{3397386240}},  \,\ \beta_{-2}={\frac {1704546247}{23781703680}}, \\ \\  \beta_{0}&=&{\frac {14871214991}{47563407360}}, \,\  \beta_{2}={\frac {19761725357}{47563407360}}, \,\ \beta_{4}={\frac {833871641}{4756340736}}, \,\ \beta_{6}={\frac {488824339}{23781703680}},\\ \\   \beta_{8}&=&{\frac {40156777}{95126814720}}, \,\ \beta_{10}={\frac {243}{1174405120}}.
		\end{array}\right.
	\end{eqnarray}
	Hence the coefficients of the following $14$-point relaxed quaternary subdivision scheme which we get from (\ref{const75a}) for $m=2$ are the non-linear combination of the values given in (\ref{const82})
	\begin{eqnarray}\label{const83}
		\left\{\begin{array}{cccc}
			{g}_{4\varphi-2}^{k+1}&=&c_{1}g_{\varphi-7}^{k}+c_{2}g_{\varphi-6}^{k}+c_{3}g_{\varphi-5}^{k}+c_{4} g_{\varphi-4}^{k}+c_{5}g_{\varphi-3}^{k}+c_{6}
			g_{\varphi-2}^{k}+c_{7}g_{\varphi-1}^{k}+c_{8}
			g_{\varphi}^{k}\\&&+c_{9}g_{\varphi+1}^{k}+c_{10}
			g_{\varphi+2}^{k}+c_{11}g_{\varphi+3}^{k}+c_{12}g_{\varphi+4}^{k}
			+c_{13}g_{\varphi+5}^{k}+c_{14}g_{\varphi+6}^{k}+c_{15}g_{\varphi+7}^{k},\\ \\
			{g}_{4\varphi-1}^{k+1}&=&c_{15}g_{\varphi-7}^{k}+c_{14}g_{\varphi-6}^{k}+c_{13} g_{\varphi-5}^{k}+c_{12}g_{\varphi-4}^{k}+c_{11} g_{\varphi-3}^{k}+c_{10}g_{\varphi-2}^{k}+ c_{9}g_{\varphi-1}^{k}+\\&& c_{8}g_{\varphi}^{k}+
			c_{7}g_{\varphi+1}^{k}+c_{6}g_{\varphi+2}^{k}+
			c_{5}g_{\varphi+3}^{k}+c_{4}g_{\varphi+4}^{k}+
			c_{3}g_{\varphi+5}^{k}+c_{2}g_{\varphi+6}^{k}+c_{1}g_{\varphi+7}^{k},\\ \\
			{g}_{4\varphi}^{k+1}&=&d_{1}g_{\varphi-6}^{k}+d_{2}g_{\varphi-5}^{k}
			+d_{3}g_{\varphi-4}^{k}+d_{4}g_{\varphi-3}^{k}
			+d_{5}g_{\varphi-2}^{k}+d_{6}g_{\varphi-1}^{k}+
			d_{7}g_{\varphi}^{k}+d_{8}g_{\varphi+1}^{k}\\&& +
			d_{9}g_{\varphi+2}^{k}+d_{10}g_{\varphi+3}^{k}+d_{11}g_{\varphi+4}^{k}+
			d_{12}g_{\varphi+5}+d_{13}g_{\varphi+6}+
			d_{14}g_{\varphi+7}^{k},\\ \\
			{g}_{4\varphi+1}^{k+1}&=&d_{14}g_{\varphi-6}^{k}+d_{13}g_{\varphi-5}^{k}+d_{12}g_{\varphi-4}^{k}+d_{11}g_{\varphi-3}^{k}
			+d_{10}g_{\varphi-2}^{k}+d_{9}g_{\varphi-1}^{k}+d_{8}g_{\varphi}^{k}+\\&& d_{7}g_{\varphi+1}^{k}+
			d_{6}g_{\varphi+2}^{k}+d_{5}g_{\varphi+3}^{k}+d_{4}g_{\varphi+4}^{k}+d_{3}g_{\varphi+5}^{k}+d_{2}g_{\varphi+6}^{k}+
			d_{1}g_{\varphi+7}^{k},
		\end{array}\right.
	\end{eqnarray}
	where
	\begin{eqnarray}\label{t83}
		\left\{\begin{array}{cccc}
			c_{1}&=& {\frac{3}{1379227385882214400}}, \,\ \,\
			c_{2}={\frac{103850537699}{1131138859846651084800}}, \,\ \,\ c_{3}={\frac{384468662194361}{603274058584880578560}},\\ \\ c_{4}&=&{\frac{193184601091081183}{1131138859846651084800}}, \,\ \,\ c_{5}={\frac{62359154801651076991}{9049110878773208678400}}, \,\ \,\ c_{6}={\frac{10438071559376400791}{141392357480831385600}},\\ \\ c_{7}&=&{\frac{116970620124289395991}{430910041846343270400}}, \,\ \,\ c_{8}={\frac{137915687699480465}{359091701538619392}}, \,\ \,\ c_{9}={\frac{648462053977087642339}{3016370292924402892800}},\\ \\ c_{10}&=&{\frac{17125519634472990817}{377046286615550361600}}, \,\ \,\ c_{11}={\frac{28434156344565775493}{9049110878773208678400}}, \,\ \,\ c_{12}={\frac{19659422070653153}{377046286615550361600}},\\ \\ c_{13}&=&{\frac{186238665573617}{1809822175754641735680}}, \,\ \,\ c_{14}={\frac{346544687}{80795632846189363200}}, \,\ \,\
			c_{15}={\frac{1}{9049110878773208678400}},\\ \\
			d_{1}&=&{\frac{213788633}{4524555439386604339200}}, \,\ \,\
			d_{2}={\frac{8408857474501}{646365062769514905600}}, \,\ \,\ d_{3}={\frac{646332081504007}{46168933054965350400}},\\ \\   d_{4}&=&{\frac{989843345911370179}{754092573231100723200}}, \,\ \,\
			d_{5}={\frac{23601083531203039271}{904911087877320867840}}, \,\ \,\ d_{6}={\frac{241716880028225596243}{1508185146462201446400}},\\ \\ d_{7}&=&{\frac{136704671280724852463}{377046286615550361600}}, \,\ \,\ d_{8}={\frac{40597867969395053203}{125682095538516787200}}, \,\ \,\ d_{9}={\frac{24196254751816504787}{215455020923171635200}},\\ \\   d_{10}&=&{\frac{1800718087517760407}{129273012553902981120}}, \,\ \,\ d_{11}={\frac{1128217272575781919}{2262277719693302169600}}, \,\ \,\
			d_{12}={\frac{7348484779529681}{2262277719693302169600}},\\ \\ d_{13}&=&{\frac{1843780220327}{1508185146462201446400}}, \,\ \,\
			d_{14}={\frac{986399}{4524555439386604339200}}.
		\end{array}\right.
	\end{eqnarray}
\begin{figure}[h]
	\begin{tabular}{cccc}
		\epsfig{file=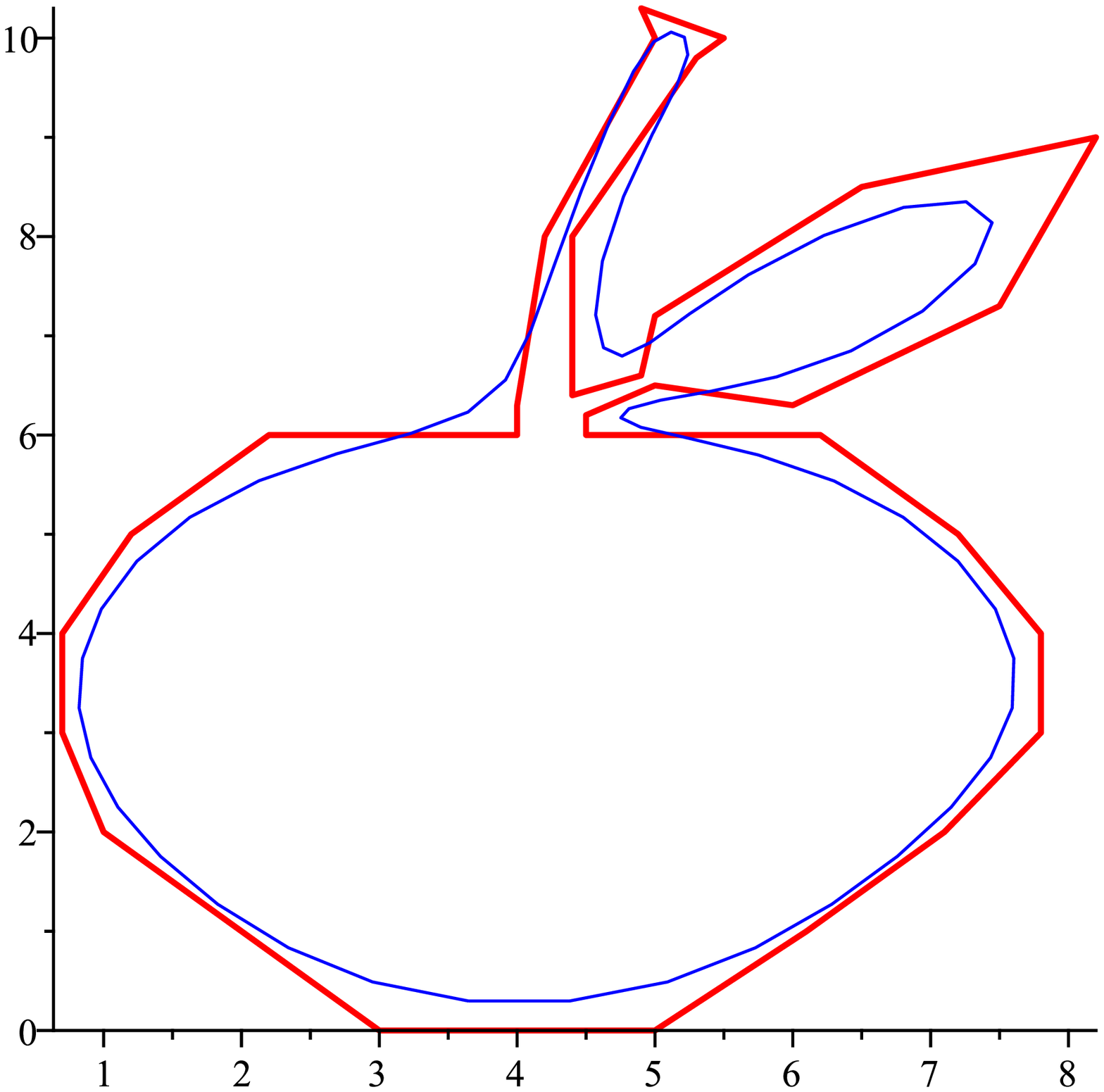, width=1.5 in} & \epsfig{file=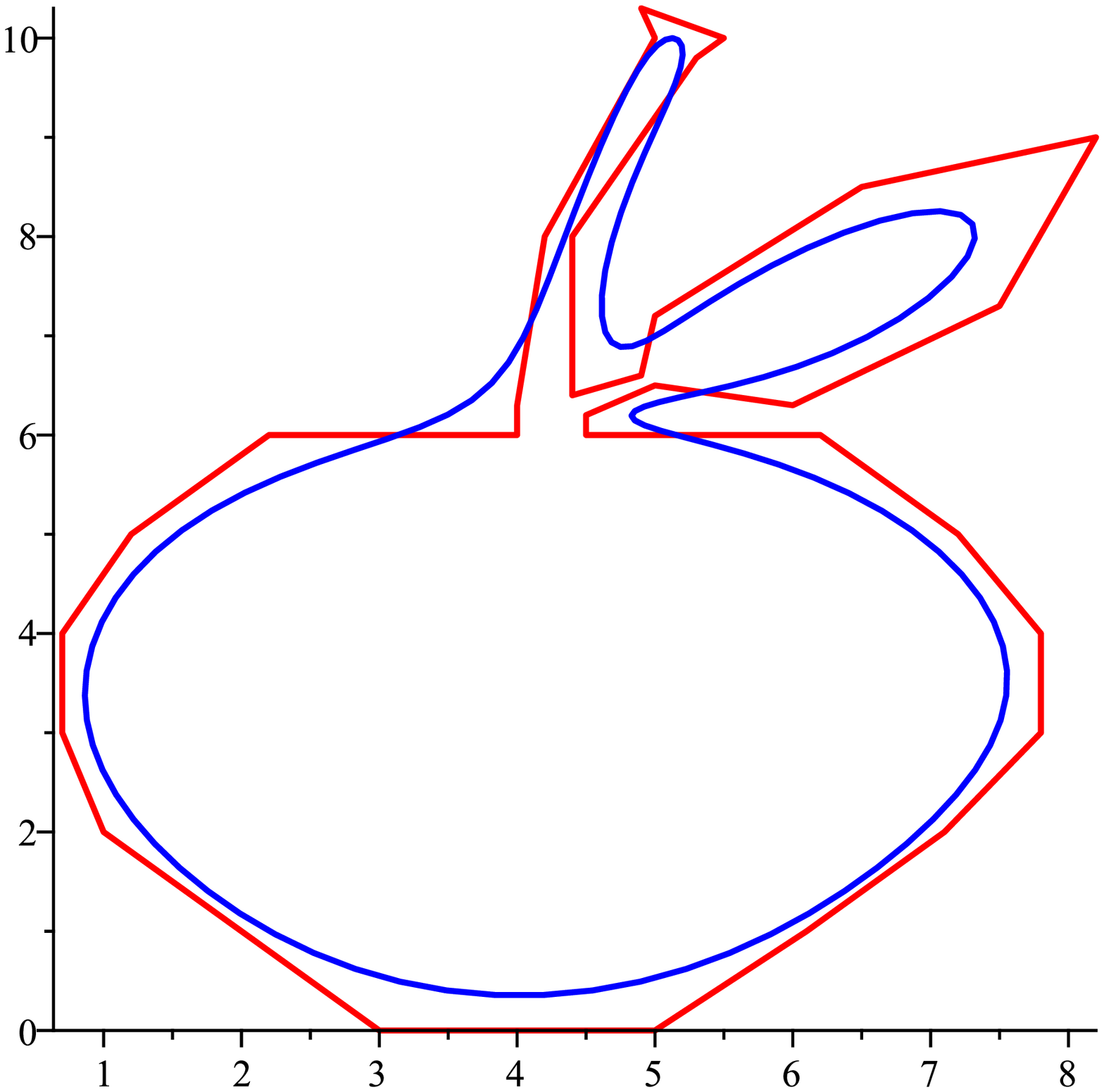, width=1.5 in} & \epsfig{file=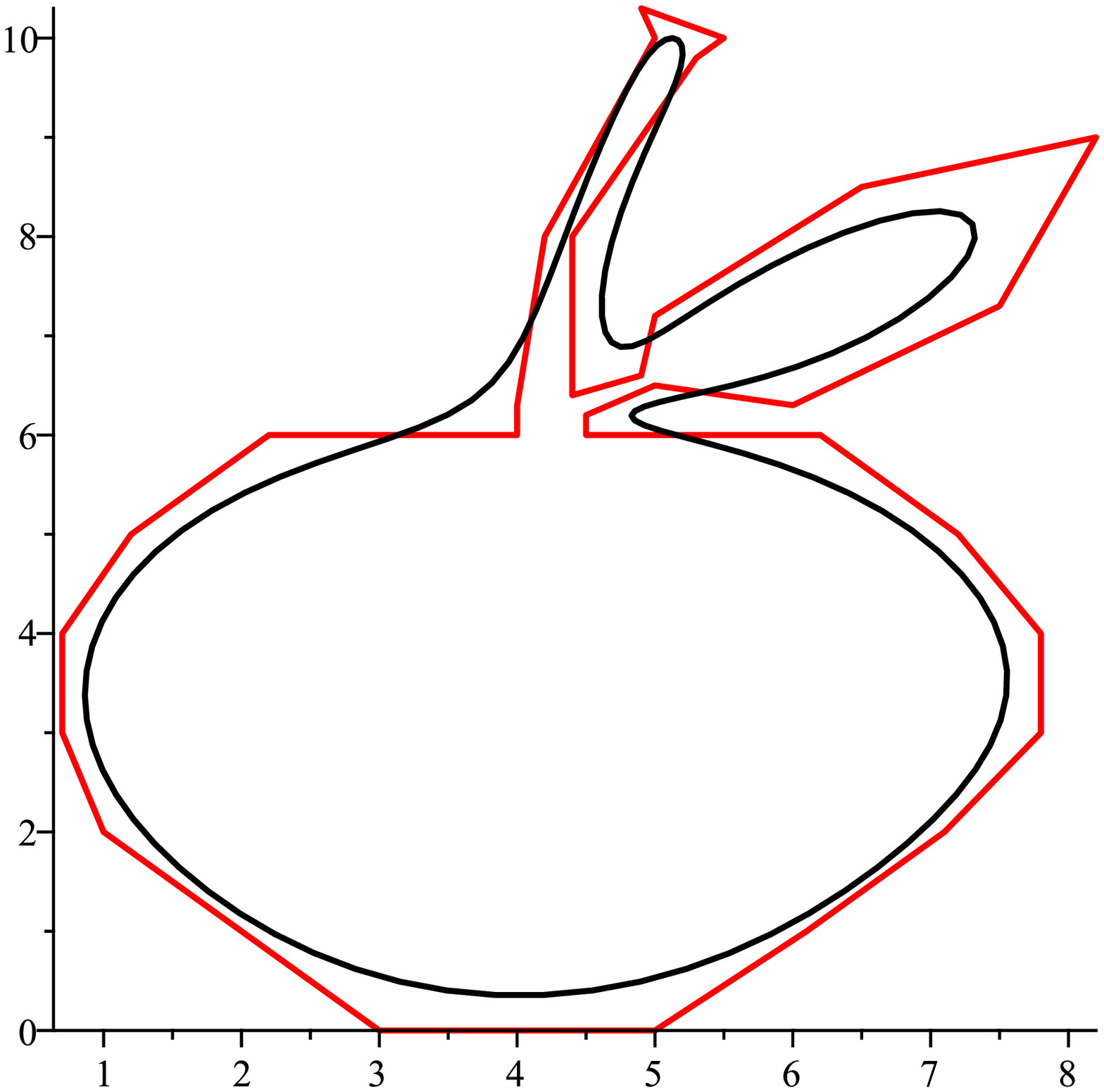, width=1.5 in} & \epsfig{file=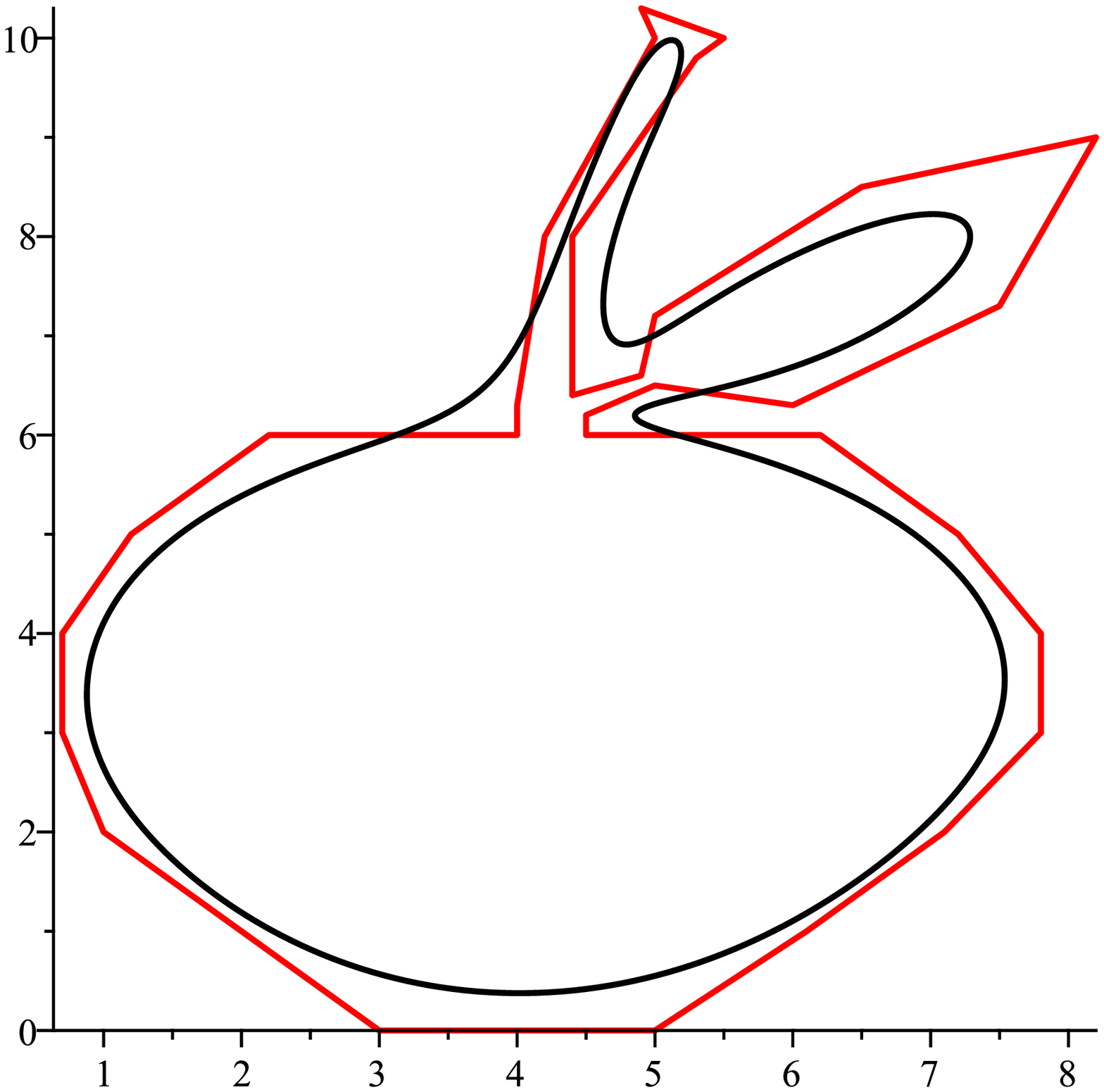, width=1.5 in} \\
		(a) One SS by BSS &(b) One SS by QSS & (c) Two SSs by BSS & (d) Two SSs by QSS
	\end{tabular}
	\caption{\label{w7}\emph{Curves generated by the pair of subdivision schemes given in Corollary \ref{cor-7}.}}
\end{figure}
\end{cor}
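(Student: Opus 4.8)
The plan is to treat this corollary as a direct specialization of Theorem \ref{thm-odd-2} at $m=2$ (equivalently $n=2m+1=5$), so that no new analysis is required beyond a careful, organized substitution. First I would note that setting $m=2$ in the $(4m+2)$-point binary scheme (\ref{const1a}) reproduces exactly the $10$-point template (\ref{const76}) already written in Corollary \ref{cor-6}; matching this template against the $10$-point scheme of \cite{Siddiqi2} then fixes the ten mask entries $\beta_{-8},\beta_{-6},\ldots,\beta_{8},\beta_{10}$ recorded in (\ref{const82}). At this stage the only thing that distinguishes the present corollary from Corollary \ref{cor-6} is the numerical mask, since both use the same value $m=2$.

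Next I would instantiate the general quaternary rules (\ref{const75a}) at $m=2$, where the outer index runs over $\lambda\in\{-2,-1,0,1,2\}$ and the inner index over $\alpha\in\{-4,-3,\ldots,5\}$. Each of the four refinement rules is a sum of two double sums whose summands are products $\beta_{\cdot}\,\beta_{\cdot}$ of mask entries. The essential step is to re-index these double sums by the combined shift (either $\alpha+\lambda$ or $\alpha+\lambda-1$, according to which sum is being treated) so that every product contributing to a fixed control point $g_{\varphi+j}$ is collected into a single coefficient. Because the combined shift ranges over $-7$ to $7$ for the rules $g_{4\varphi-2}^{k+1},g_{4\varphi-1}^{k+1}$ and over $-6$ to $7$ for $g_{4\varphi}^{k+1},g_{4\varphi+1}^{k+1}$, this produces the $15$ symbolic coefficients $c_1,\ldots,c_{15}$ and the $14$ symbolic coefficients $d_1,\ldots,d_{14}$ respectively, each expressed as a sum of pairwise products of the $\beta$'s, which is precisely the bilinear pattern already exhibited for $m=1$ in the preceding corollary.

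Finally I would substitute the numerical mask (\ref{const82}) into these bilinear expressions and simplify the resulting rationals to obtain the closed-form coefficients listed in (\ref{t83}), thereby establishing (\ref{const83}) as the claimed $(6m+2)=14$-point relaxed quaternary scheme. To confirm that the scheme is genuinely relaxed in the sense used in Corollary \ref{cor-6}, I would verify that $g_{4\varphi-2}^{k+1}$ and $g_{4\varphi-1}^{k+1}$ genuinely involve $15$ distinct control points while $g_{4\varphi}^{k+1}$ and $g_{4\varphi+1}^{k+1}$ involve $14$, and then check the affine-invariance condition $\sum_j c_j=\sum_j d_j=1$. This last condition follows automatically from the fact that each coefficient is a sum of products $\beta_a\beta_b$, so the total coefficient sum factors as a product of two copies of the binary convergence sum, each equal to $1$.

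The main obstacle is purely the bookkeeping in the middle step: aligning the two independent shifts $\lambda$ and $\alpha$ so that the many pairwise products land in the correct control-point coefficient, and doing so consistently across all four rules. No genuinely new idea is needed, since the argument is structurally identical to the derivations already carried out in Theorems \ref{thm-odd-1} and \ref{thm-odd-2}; the volume of terms (ten mask entries combined bilinearly) simply makes transcription the chief source of risk. I would therefore organize the computation as a self-convolution of the mask and read the coefficients off the resulting array rather than expanding each double sum by hand.
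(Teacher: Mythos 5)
Your proposal follows essentially the same route as the paper: Corollary \ref{cor-7} is obtained precisely by specializing the general quaternary rules (\ref{const75a}) of Theorem \ref{thm-odd-2} at $m=2$ (so $\lambda\in\{-2,\ldots,2\}$, $\alpha\in\{-4,\ldots,5\}$), collecting the bilinear products $\beta_{\cdot}\beta_{\cdot}$ by the combined shift $\alpha+\lambda$ or $\alpha+\lambda-1$ to get the $15$ coefficients $c_1,\ldots,c_{15}$ and the $14$ coefficients $d_1,\ldots,d_{14}$, and then substituting the $10$-point mask (\ref{const82}) of \cite{Siddiqi2}, exactly as you describe. Your extra affine-invariance check is also correct in substance, with the small caveat that $\sum_j c_j$ equals $\bigl(\sum_{\lambda}\beta_{2-4\lambda}+\sum_{\lambda}\beta_{-4\lambda}\bigr)\cdot 1=1$ because each inner sum is the binary convergence sum and the two outer index sets partition the mask, rather than being literally a product of two full convergence sums.
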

Figure \ref{w7} also give the comparison between the models generated by the $10$-point binary and the $14$-point relaxed quaternary subdivision schemes when masks of the subdivision schemes are all positive.
\section{H\"{o}lder's regularity of the presented pairs of subdivision schemes}
 In this section, we evaluate and compare the H\"{o}lder's regularity of the each pair of binary and quaternary subdivision schemes which we have discussed in the corollaries of Section 3. This evaluation is done by a well-known technique presented by \cite{Dyn2} which is defined here.
 \begin{defn}\label{def-2.3}
 	H\"{o}lder regularity  is an extension of the Laurent's polynomial of continuity which gives more information about any scheme. The H\"{o}lder regularity of subdivision scheme with Laurent's polynomial $\mu(c)$ can be computed in the following way. Let
 	\begin{equation*}\label{}
 		\mu(c)=\left(\frac{1+c+c^{2}+\ldots+c^{s-1}}{s}\right)^p \nu(c),
 	\end{equation*}
 	without loss of generality, we can suppose that $e_{0}$, $e_{1}$,\ldots, $e_{t-1}$,$e_{t}$ to be the non-zero coefficients of $\nu(c)$ and let $E_{0}$, $E_{1}$, \ldots, $E_{t-1}$, $E_{t}$ be the $t\times t$ matrices with elements:
 	\begin{equation}\label{a}
 		(E_{q})_{ij}=e_{t+i-sj+q}, \,\ \,\  i,j=1,2,\ldots,t-1,t \,\ \,\ and \,\ \,\ q=0,1,\ldots,t-1,t.
 	\end{equation}
 	Then the H\"{o}lder regularity of the subdivision schemes is given by
 	\begin{equation*}\label{}
 		r=p-\log_{s}(\xi).
 	\end{equation*}
 	Where $\xi$ is the joint spectral radius of the matrices $E_{0}$, $E_{1}$, \ldots, $E_{t-1}$, $E_{t}$, \\ i.e
 	\begin{equation*}\label{}
 		\xi=\rho(E_{0},E_{1},\ldots,E_{t-1},E_{t})
 		=\limsup_{l\rightarrow\infty}(\max\{\|E_{i(l)}E_{i(l-1)}\ldots E_{i(2)}E_{i(1)}\|^{\frac{1}{l}}_{\infty}: \,\ i_{l}\in[0,1]\}).
 	\end{equation*}
 	and
 	\begin{equation}\label{2aa}
 		\max\{\rho(E_{0}),\rho(E_{1}),\ldots,\rho(E_{t-1}),\rho(E_{t})\} \leq \max\{\|E_{0}\|_{\infty},\|E_{1}\|_{\infty},\ldots, \|E_{t-1}\|_{\infty},\|E_{t}\|_{\infty}\}
 	\end{equation}
 	Since $\xi$ is bounded from below by the spectral radii and above from the norm of the matrices $E_{0}$, $E_{1}$, \ldots, $E_{t-1}$, $E_{t}$.\\ Then
 	\begin{equation*}\label{}
 		\max\{\rho(E_{0}),\rho(E_{1}),\ldots,\rho(E_{t-1}),\rho(E_{t})\} \leq\xi\leq \max\{\|E_{0}\|_{\infty},\|E_{1}\|_{\infty},\ldots, \|E_{t-1}\|_{\infty},\|E_{t}\|_{\infty}\}.
 	\end{equation*}
 \end{defn}
  Given is an important remark about the Laurent polynomial representation of the binary and quaternary subdivision schemes.
 \begin{rem}
 	Througout the paper, the Laurent polynomial of the $4m$-point binary subdivision scheme is denoted by $\mu_{4m}(c)$, while the Laurent polynomial of the $(4m+2)$-point binary subdivision scheme is denoted by $\mu_{4m+2}(c)$. In the same way, the Laurent Polynomial of the $(6m-1)$-point relaxed quaternary schemes is denoted by $U_{6m-1}(c)$, while the Laurent polynomial of the $(6m+2)$-point relaxed quaternary scheme is denoted by the symbol $U_{6m+2}(c)$. 
 \end{rem}
In the following theorem, we estimate the H\"{o}lder's continuity of the $4$-point binary and its corresponding $5$-point relaxed quaternary subdivision schemes. 
\begin{thm}\label{H-T-1}
	The H\"{o}lder's regularity of the $4$-point binary subdivision scheme (\ref{constaa1})  is 4.124809715, whereas the H\"{o}lder's regularity of the $5$-point relaxed quaternary subdivision scheme (\ref{constaa2}) is 4.12397897.
\end{thm}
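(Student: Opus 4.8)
The plan is to apply the algorithm of Definition \ref{def-2.3} twice: once with $s=2$ to the binary scheme (\ref{constaa1}) and once with $s=4$ to the quaternary scheme (\ref{constaa2}), since a binary scheme carries the smoothing factor $\frac{1+c}{2}$ while a quaternary scheme carries $\frac{1+c+c^2+c^3}{4}$. The two required numbers are then read off from $r=p-\log_s(\xi)$.

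First I would assemble the two Laurent polynomials. For the binary scheme the symbol $\mu_4(c)$ is the degree-$7$ polynomial whose coefficients are the eight numbers obtained by interleaving $\beta_{-2},\beta_0,\beta_2,\beta_4$ from (\ref{a41}) according to the two rules in (\ref{constaa1}); for the quaternary scheme the symbol $U_5(c)$ is read off directly from the masks $\hat{A}_i,\hat{B}_i$ in (\ref{a42}) arranged as in (\ref{constaa2}). The convergence condition $\sum\beta_{rj+\eta}=1$ stated in Section \ref{Introduction} ensures each symbol is divisible by its smoothing factor, so the next step is to extract the maximal power: write $\mu_4(c)=\left(\frac{1+c}{2}\right)^{p}\nu(c)$ and $U_5(c)=\left(\frac{1+c+c^2+c^3}{4}\right)^{p'}\nu'(c)$, carrying out the polynomial divisions until the residual factor no longer vanishes at the relevant roots. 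The exponents $p$ and $p'$ are the integer parts that feed the final formula.

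With $\nu$ (respectively $\nu'$) in hand I would list its nonzero coefficients $e_0,\dots,e_t$, build the $t\times t$ matrices $E_0,\dots,E_t$ exactly as in (\ref{a}), and estimate the joint spectral radius $\xi=\rho(E_0,\dots,E_t)$. Because $\xi$ has no closed form, I would use the sandwich (\ref{2aa}): the maximum of the spectral radii of the individual $E_q$ bounds $\xi$ from below and the maximum of their $\infty$-norms bounds it from above, and I would then sharpen both sides by replacing single matrices with all products $E_{i(l)}\cdots E_{i(1)}$ of length $l$ and taking $\|\cdot\|_\infty^{1/l}$, increasing $l$ until the lower and upper estimates agree to the printed precision. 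Substituting the resulting values into $r=p-\log_2(\xi)$ for the binary case and $r=p'-\log_4(\xi')$ for the quaternary case should reproduce $4.124809715$ and $4.12397897$.

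The main obstacle is precisely the joint-spectral-radius estimate: the number of length-$l$ products grows like $(t+1)^l$, and the gap between the lower and upper bounds in (\ref{2aa}) closes only slowly, so attaining nine correct digits requires either fairly long products or an argument that one particular short cycle of matrices is extremal and dictates $\xi$. A secondary but routine care point is the bookkeeping: getting the interleaving of the binary mask and the arrangement of the $\hat{A}_i,\hat{B}_i$ into $U_5(c)$ exactly right, and confirming that the extracted exponents $p,p'$ are maximal, since an error of one in either exponent shifts $r$ by a whole unit and would destroy the near-coincidence of the two regularities.
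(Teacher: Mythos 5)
Your plan coincides with the paper's proof step for step through the sandwich (\ref{2aa}): the paper forms $\mu_4(c)$ from (\ref{a41}), extracts $\mu_4(c)=\left(\frac{1+c}{2}\right)^{5}\nu_4(c)$ with $\nu_4(c)=(1+22c+c^{2})/(12c^{3})$, builds $E_0,E_1,E_2$ via (\ref{a}) with $t=2$, $s=2$, and on the quaternary side extracts $U_5(c)=\left(\frac{1+c+c^{2}+c^{3}}{4}\right)^{5}V_5(c)$ with seven coefficients $e_0,\ldots,e_6$ and $6\times 6$ matrices $E_0,\ldots,E_6$, so your $p=p'=5$ and all your bookkeeping match the paper exactly. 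The divergence is at the decisive step, and it is your proposal, not the paper, that handles it correctly in principle. You propose to close the gap in (\ref{2aa}) by taking products of increasing length until the lower and upper estimates agree. The paper does nothing of the sort: it computes the lower bound $\max_q \rho(E_q)$ and an upper bound (its printed ``$\|\cdot\|_\infty$'' values $1.8352$ and $3.3745,\,3.3756$ are in fact spectral norms --- the true row-sum norms coincide with the spectral radii here), and then simply \emph{declares} $\xi$ to be the arithmetic midpoint of the two bounds, explicitly ``we choose the mid value $3.368350000$,'' before evaluating $r=p-\log_s(\xi)$. The nine digits in the theorem statement are artifacts of this midpoint convention; the inference from $1.8333\le\xi\le 1.8352$ to $\xi=1.834250000$ is unjustified.

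Consequently your termination criterion (``increase $l$ until the two estimates agree to the printed precision'') can never be met at the printed values, and your worry about exponentially many products is moot: exact arithmetic closes the sandwich already at $l=1$. For the binary scheme the rows of $E_0$ sum to $11/6$ and $1/6$, so $\rho(E_0)=\|E_0\|_\infty=11/6$, and all three infinity norms equal $11/6$; hence (\ref{2aa}) pins $\xi=11/6$ exactly and $r=5-\log_2(11/6)\approx 4.125530$, not $4.124809715$. For the quaternary scheme every row containing $e_3=121/36$ contains only $e_3$ (indices four apart fall outside $0,\ldots,6$), so $\xi=121/36=(11/6)^2$ exactly, and since $\log_4\bigl((11/6)^2\bigr)=\log_2(11/6)$ you obtain the \emph{same} regularity $5-\log_2(11/6)$ --- as must happen, because the quaternary scheme is two rounds of the binary scheme and the limit curves coincide. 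So your method is sound and strictly more rigorous than the paper's, but carried out faithfully it proves a different (and cleaner) statement --- both regularities equal $5-\log_2(11/6)$ --- and cannot reproduce the two distinct decimals asserted in the theorem.
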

\begin{proof}
	To follow the procedure for H\"{o}lder's regularity, firstly we write the Laurent's polynomial $\mu_{4}(c)$ of the binary subdivision scheme (\ref{constaa1}). That is:
	\begin{eqnarray*}\label{constb1}
		\mu_{4}(c)&=&\frac{1}{384}(c^{-3}+27c^{-2}+121c^{-1}+235c^{0}+235c^{1}+121c^{2}+27c^{3}+c^{4}).
	\end{eqnarray*}
	This implies that
	\begin{equation}\label{constb2}
		\mu_{4}(c)=\left(\frac{1+c}{2}\right)^{5}\nu_{4}(c),
	\end{equation}
	where
	\begin{eqnarray}\label{constb3-1}
		\nu_{4}(c)=\frac{1+22c+c^{2}}{12c^{3}}.
	\end{eqnarray}
	Now from (\ref{a}) we know that $(E_{q})_{ij}=e_{t+i-sj+q}$, so by (\ref{constb3-1}) we have\\
	$e_{0}=\frac{1}{12}$, $e_{1}=\frac{11}{6}$, $e_{2}=\frac{1}{12}$, $p=5$, $t=2$ and $s=2$, thus $q=0, 1, 2$ and then
	$E_{0}$, $E_{1}$ and $E_{2}$ are the matrices with  the elements:
	\begin{eqnarray*}\label{constbb3}
		\left\{\begin{array}{c}
			(E_{0})_{ij}=e_{2+i-2j},\\
			(E_{1})_{ij}=e_{2+i-2j+1},\\
			(E_{2})_{ij}=e_{2+i-2j+2},
		\end{array}\right.
	\end{eqnarray*}
	where $i,j=1,2$.
	
	Hence
	\begin{eqnarray*}\label{constb4}
		E_{0}=\left[
		\begin{array}{cc}
			\frac{11}{6} & 0 \\ \\
			\frac{1}{12} & \frac{1}{12}
		\end{array}
		\right],\,\
		E_{1}=\left[
		\begin{array}{cc}
			\frac{1}{12} & \frac{1}{12}\\ \\
			0 & \frac{11}{6}
		\end{array}
		\right]
		\mbox{and} \,\
		E_{2}=\left[
		\begin{array}{cc}
			0 & \frac{11}{6} \\ \\
			0 & \frac{1}{12}
		\end{array}
		\right].
	\end{eqnarray*}
	Now we calculate the largest eigenvalues of $E_{0}$, $E_{1}$ and $E_{2}$, that are:
	\begin{eqnarray*}\label{constb5}
		\rho(E_{0})=1.8333,\,\
		\rho(E_{1})=1.8333,\,\
		\rho(E_{2})= 0.0833.
	\end{eqnarray*}
	Further, the norm-infinity of these three matrices are:
	\begin{eqnarray*}\label{constb6}
		||E_{0}||_{\infty}=  1.8352,\,\
		||E_{1}||_{\infty}=  1.8352,\,\
		||E_{2}||_{\infty}=  1.8352.
	\end{eqnarray*}
	By using (\ref{2aa}), we have
	\begin{eqnarray*}\label{constb8}
		\max\{1.8333, 1.8333, 0.0833\}\leq\xi\leq \max\{1.8352,  1.8352,  1.8352\}.
	\end{eqnarray*}
	This implies that
		\begin{eqnarray*}
    \xi&=&1.834250000
	\end{eqnarray*}
Thus the H\"{o}lder's regularity of scheme (\ref{constaa1}) is:
	\begin{eqnarray*}\label{constb9}
		r&=&p-log_{s}(\xi)
		=5-log_{2}(1.834250000)
		=4.124809715.
	\end{eqnarray*}
	The Laurent polynomial $U_{5}(c)$ of the quaternary subdivision scheme (\ref{constaa2}) is
	\begin{eqnarray*}\label{constb1}
		U_{5}(c)&=&\hat{B}_{6}c^{-10}+\hat{B}_{1}c^{-9}+\hat{A}_{5}c^{-8}+\hat{A}_{1}c^{-7}+\hat{B}_{5}c^{-6}+\hat{B}_{2}c^{-5}
		+\hat{A}_{4}c^{-4}+\hat{A}_{2}c^{-3}+\hat{B}_{4}c^{-2}+\\&&\hat{B}_{3}c^{-1}+\hat{A}_{3}c^{0}+\hat{A}_{3}c^{1}+
		\hat{B}_{3}c^{2}+\hat{B}_{4}c^{3}+\hat{A}_{2}c^{4}+\hat{A}_{4}c^{5}+\hat{B}_{2}c^{6}+\hat{B}_{5}c^{7}+\hat{A}_{1}c^{8}+
		\hat{A}_{5}c^{9}\\&&+\hat{B}_{1}c^{10}+\hat{B}_{6}c^{11},
	\end{eqnarray*}
	where the values of $\hat{A}_{1}, \ldots, \hat{A}_{5}$, and $\hat{B}_{1}, \ldots, \hat{B}_{6}$ are given in (\ref{a42}).
	This implies that
	\begin{eqnarray}\label{constb3-2}
		U_{5}(c)=\left(\frac{1+c+c^{2}+c^{3}}{4}\right)^{5}V_{5}(c),
	\end{eqnarray}
	where
	\begin{eqnarray*}\label{constb4}
		V_{5}(c)=\frac{1}{144c^{10}}(1+22c+23c^{2}+484c^{3}+23c^{4}+22c^{5}+c^{6}).
	\end{eqnarray*}
	It is given from (\ref{a}) that $(E_{q})_{ij}=e_{t+i-sj+q}$, so by (\ref{constb3-2}), we have\\
	$e_{0}=\frac{1}{144}$, $e_{1}=\frac{11}{72}$, $e_{2}=\frac{23}{144}$, $e_{3}=\frac{121}{36}$, $e_{4}=\frac{23}{144}$, $e_{5}=\frac{11}{72}$, $e_{6}=\frac{1}{144}$, $p=5$, $t=6$ and $s=4$. Thus $q=0, 1,2,\ldots,6$ and then $E_{0}$, $E_{1}$, \ldots, $E_{6}$ are the matrices with the elements:
	\begin{eqnarray}\label{constb5}
		\left\{\begin{array}{c}
			(E_{0})_{ij}=e_{6+i-4j}\\
			(E_{1})_{ij}=e_{6+i-4j+1},\\
			\vdots\\
			(E_{5})_{ij}=e_{6+i-4j+5},\\
			(E_{6})_{ij}=e_{6+i-4j+6},
		\end{array}\right.
	\end{eqnarray}
	where $i,j=1,2,3,4,5,6$.
	
	So, we have
	\begin{eqnarray*}\label{constb4}
		E_{0}=\left[
		\begin{array}{cccccc}
			e_{3} & 0 & 0 & 0 & 0 & 0 \\
			e_{4} & e_{0} & 0 & 0 & 0 & 0 \\
			e_{5} & e_{1} & 0 & 0 & 0 & 0 \\
			e_{6} & e_{2} & 0 & 0 & 0 & 0 \\
			0 & e_{3} & 0 & 0 & 0 & 0 \\
			0 & e_{4} & e_{0} & 0 & 0 & 0
		\end{array}
		\right],\,\
		E_{1}=\left[
		\begin{array}{cccccc}
			e_{4} & e_{0} & 0 & 0 & 0 & 0 \\
			e_{5} & e_{1} & 0 & 0 & 0 & 0 \\
			e_{6} & e_{2} & 0 & 0 & 0 & 0 \\
			0 & e_{3} & 0 & 0 & 0 & 0 \\
			0 & e_{4} & e_{0} & 0 & 0 & 0 \\
			0 & e_{5} & e_{1} & 0 & 0 & 0
		\end{array}
		\right],\,\
		E_{2}=\left[
		\begin{array}{cccccc}
			e_{5} & e_{1} & 0 & 0 & 0 & 0 \\
			e_{6} & e_{2} & 0 & 0 & 0 & 0 \\
			0 & e_{3} & 0 & 0 & 0 & 0 \\
			0 & e_{4} & e_{0} & 0 & 0 & 0 \\
			0 & e_{5} & e_{1} & 0 & 0 & 0 \\
			0 & e_{6} & e_{2} & 0 & 0 & 0
		\end{array}
		\right],
	\end{eqnarray*}
	\begin{eqnarray*}\label{constb4}
		E_{3}=\left[
		\begin{array}{cccccc}
			e_{6} & e_{2} & 0 & 0 & 0 & 0 \\
			0 & e_{3} & 0 & 0 & 0 & 0 \\
			0 & e_{4} & e_{0} & 0 & 0 & 0 \\
			0 & e_{5} & e_{1} & 0 & 0 & 0 \\
			0 & e_{6} & e_{2} & 0 & 0 & 0 \\
			0 & 0 & e_{3} & 0 & 0 & 0
		\end{array}
		\right],\,\
		E_{4}=\left[
		\begin{array}{cccccc}
			0 & e_{3} & 0 & 0 & 0 & 0 \\
			0 & e_{4} & e_{0} & 0 & 0 & 0 \\
			0 & e_{5} & e_{1} & 0 & 0 & 0 \\
			0 & e_{6} & e_{2} & 0 & 0 & 0 \\
			0 & 0 & e_{3} & 0 & 0 & 0 \\
			0 & 0 & e_{4} & e_{0} & 0 & 0
		\end{array}
		\right],\,\
		E_{5}=\left[
		\begin{array}{cccccc}
			0 & e_{4} & e_{0} & 0 & 0 & 0 \\
			0 & e_{5} & e_{1} & 0 & 0 & 0 \\
			0 & e_{6} & e_{2} & 0 & 0 & 0 \\
			0 & 0 & e_{3} & 0 & 0 & 0 \\
			0 & 0 & e_{4} & e_{0} & 0 & 0 \\
			0 & 0 & e_{5} & e_{1} & 0 & 0
		\end{array}
		\right]
	\end{eqnarray*}
	\begin{eqnarray*}
		\mbox{and} \,\
		E_{6}=\left[
		\begin{array}{cccccc}
			0 & e_{5} & e_{1} & 0 & 0 & 0 \\
			0 & e_{6} & e_{2} & 0 & 0 & 0 \\
			0 & 0 & e_{3} & 0 & 0 & 0 \\
			0 & 0 & e_{4} & e_{0} & 0 & 0 \\
			0 & 0 & e_{5} & e_{1} & 0 & 0 \\
			0 & 0 & e_{6} & e_{2} & 0 & 0
		\end{array}
		\right]
	\end{eqnarray*}
	The largest eignevalues of $E_{0}$, $E_{1}$, \ldots, $E_{6}$ are:
\begin{eqnarray*}\label{constb7}
	\rho(E_{0})&=&3.3611, \,\
	\rho(E_{1})=0.1890, \,\
	\rho(E_{2})=0.1890, \,\
	\rho(E_{3})=3.3611, \\
	\rho(E_{4})&=&0.1890, \,\
	\rho(E_{5})=0.1890, \,\
	\rho(E_{6})=3.3611.
\end{eqnarray*}
The norm-infinity of matrices $E_{0}$, $E_{1}$, \ldots, $E_{6}$ are:
\begin{eqnarray*}\label{constb8}
	||E_{0}||_{\infty}&=&3.3745,\,\
	||E_{1}||_{\infty}=3.3756,\,\
	||E_{2}||_{\infty}=3.3756,\,\
	||E_{3}||_{\infty}=3.3745,\\
	||E_{4}||_{\infty}&=&3.3745,\,\
	||E_{5}||_{\infty}=3.3756,\,\
	||E_{6}||_{\infty}=3.3756.
\end{eqnarray*}
	Now from (\ref{2aa}), we have
	\begin{eqnarray*}\label{constb9}
		\max[\rho(E_{0}), \rho(E_{1}),\ldots, \rho(E_{5}), \rho(E_{6}]\leq\xi\leq \max[||E_{0}||_{\infty}, ||E_{1}||_{\infty},\ldots, ||E_{5}||_{\infty}, ||E_{6}||_{\infty}].
	\end{eqnarray*}
	This implies that
	\begin{eqnarray*}\label{constb10}
		\max[3.3611,0.1890,\ldots,0.1890,3.3611]\leq\xi\leq \max[3.3745,3.3756,\ldots,3.3756,3.3756 ].
	\end{eqnarray*}
	Since the largest eigenvalue and the max-norm of the matrices is between ($3.3611$ and $3.3756$) and we choose the mid value $3.368350000$ of the above given values, so the H\"{o}lder regularity is given by
	\begin{eqnarray*}\label{constb11}
		r&=&p-log_{s}(\xi)
		=5-log_{4}(3.368350000)
		=4.123978973.
	\end{eqnarray*}
\end{proof}
The proof of the following theorems follows the proof of Theorem \ref{H-T-1}.
\begin{thm}
	The H\"{o}lder's regularity of the $6$-point binary subdivision scheme (\ref{consta42})  is 6.383689358, while the H\"{o}lder's regularity of the $8$-point relaxed quaternary subdivision scheme (\ref{consta43}) is 6.378805452.
\end{thm}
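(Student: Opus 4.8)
The plan is to follow the procedure of Theorem \ref{H-T-1} verbatim, now applied to the 6-point binary scheme (\ref{consta42}) and the 8-point relaxed quaternary scheme (\ref{consta43}). The machinery is Definition \ref{def-2.3}: write the symbol of each scheme, extract the maximal smoothing factor, assemble the transfer matrices $E_q$ through the entry formula (\ref{a}), and then sandwich the joint spectral radius $\xi$ between the largest spectral radius and the largest infinity-norm of those matrices via (\ref{2aa}). The regularity is finally $r = p - \log_s(\xi)$, with $s=2$ for the binary scheme and $s=4$ for the quaternary one.

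First I would treat the binary scheme. Using the mask (\ref{a46}) I would write its Laurent polynomial $\mu_6(c) = \sum_j \beta_j c^j$, then pull out as many smoothing factors as possible, i.e.\ write $\mu_6(c) = \left(\frac{1+c}{2}\right)^{p} \nu_6(c)$ with $p$ chosen maximal so that $\nu_6(1) \neq 0$; polynomial division supplies $\nu_6(c)$ and its nonzero coefficients $e_0, e_1, \ldots, e_t$. With $s=2$ I would build the $t\times t$ matrices $E_0, \ldots, E_t$ from $(E_q)_{ij} = e_{t+i-sj+q}$, compute the spectral radii $\rho(E_q)$ and the norms $\|E_q\|_\infty$, and apply the two-sided estimate (\ref{2aa}). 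Since the resulting lower and upper bounds are very close, I would take their mid value as the value of $\xi = \rho(E_0, \ldots, E_t)$ and read off $r = p - \log_2(\xi) = 6.383689358$.

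The quaternary scheme is handled in exactly the same way but with $s=4$. I would form $U_8(c)$ from the mask (\ref{a47}), factor $U_8(c) = \left(\frac{1+c+c^2+c^3}{4}\right)^{p} V_8(c)$ with $p$ maximal, collect the nonzero coefficients of $V_8(c)$, assemble the associated matrices via $(E_q)_{ij} = e_{t+i-4j+q}$, and again squeeze $\xi$ between $\max_q \rho(E_q)$ and $\max_q \|E_q\|_\infty$. The mid value then gives $r = p - \log_4(\xi) = 6.378805452$.

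The genuinely hard part is the evaluation of the joint spectral radius $\xi$, which in general admits no closed form. The method sidesteps this by leaning on the bound (\ref{2aa}) together with the empirical fact that for these masks the spectral-radius lower bound and the norm upper bound nearly coincide, so the mid value is an accurate estimate. The delicate points in practice are determining the exact maximal power $p$ of the smoothing factor by polynomial division, and handling the comparatively large transfer matrices of the quaternary scheme, whose order $t$ grows with the support and so makes the eigenvalue and norm computations the most laborious step.
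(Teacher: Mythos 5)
Your proposal coincides with the paper's own treatment: the paper gives no separate argument for this theorem, stating only that its proof follows the proof of Theorem \ref{H-T-1}, which is exactly the pipeline you lay out (Laurent polynomial from the mask, extraction of the maximal smoothing factor, matrices $E_{q}$ built from (\ref{a}), the two-sided bound (\ref{2aa}) on the joint spectral radius, the mid-value estimate for $\xi$, and $r=p-\log_{s}(\xi)$ with $s=2$ for the binary and $s=4$ for the quaternary scheme). One small detail to fix: maximality of $p$ is certified by non-vanishing of the quotient at the roots of the extracted factor --- $\nu_{6}(-1)\neq 0$ for $\left(\frac{1+c}{2}\right)$, and $V_{8}(c)\neq 0$ at $c=-1,\pm\di$ for $\left(\frac{1+c+c^{2}+c^{3}}{4}\right)$ --- not by $\nu_{6}(1)\neq 0$, which holds automatically for any convergent scheme and so would never terminate the division.
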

\begin{thm}
	The H\"{o}lder's regularity of the $8$-point binary subdivision scheme (\ref{constaa})  is 8.575077912 and the H\"{o}lder's regularity of the $11$-point relaxed quaternary subdivision scheme (\ref{const32}) is 8.561638397.
\end{thm}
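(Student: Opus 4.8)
The plan is to apply the procedure of Definition~\ref{def-2.3} verbatim, exactly as it was carried out in Theorem~\ref{H-T-1}, first to the $8$-point binary scheme (\ref{constaa}) and then to its corresponding $11$-point relaxed quaternary scheme (\ref{const32}). In each case the computation splits into three stages: first write the Laurent polynomial and extract the maximal smoothing factor; then assemble the square matrices $E_q$ from the coefficients of the remaining factor via the index rule (\ref{a}); and finally trap the joint spectral radius $\xi$ between the largest spectral radius and the largest infinity-norm of these matrices, reading off the regularity from $r=p-\log_s(\xi)$.

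For the binary part I would form the symmetric Laurent polynomial $\mu_8(c)$ directly from the mask (\ref{a43}), supported on the indices dictated by (\ref{constaa}), and then factor out the maximal power of $(1+c)/2$, writing $\mu_8(c)=\left(\frac{1+c}{2}\right)^{p}\nu_8(c)$. Following the pattern of Theorem~\ref{H-T-1} (where the $4$-point scheme gave $p=5$), the multiplicity for the $8$-point scheme should be $p=9$, which is consistent with the stated value through $8.575077912=p-\log_2(\xi)$. With $s=2$ and $t=\deg\nu_8$, I build $E_0,\ldots,E_t$ from the nonzero coefficients of $\nu_8$ using $(E_q)_{ij}=e_{t+i-sj+q}$, compute the spectral radii $\rho(E_q)$ and the norms $\|E_q\|_\infty$, and invoke (\ref{2aa}) to sandwich $\xi$. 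Taking the mid-value of the resulting interval, as in Theorem~\ref{H-T-1}, and substituting into $r=p-\log_2(\xi)$ should reproduce $8.575077912$.

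For the quaternary part the identical recipe is run with $s=4$: write $U_{11}(c)$ from the mask (\ref{a44}), factor $U_{11}(c)=\left(\frac{1+c+c^2+c^3}{4}\right)^{p}V_{11}(c)$ (again with $p=9$, mirroring the $5$-point case where the quaternary and binary orders coincided), form the matrices $E_q$ from the coefficients of $V_{11}$ with $s=4$ and $t=\deg V_{11}$, and bound $\xi$ between the maximal spectral radius and the maximal infinity-norm of the $E_q$. The mid-value of this interval, inserted into $r=p-\log_4(\xi)$, should give $8.561638397$, confirming the slight loss of regularity in passing from the binary scheme to its relaxed quaternary counterpart.

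The main obstacle is the joint spectral radius $\xi=\rho(E_0,\ldots,E_t)$, which has no closed form. The whole argument rests on the squeeze (\ref{2aa}): it yields a sharp value only when the lower bound (largest spectral radius) and the upper bound (largest infinity-norm) are numerically close, so that their midpoint is an accurate proxy for $\xi$; the real content is verifying that this remains the case at the present, larger size. A second, purely practical difficulty is bookkeeping, since compared with Theorem~\ref{H-T-1} the degrees of $\nu_8$ and $V_{11}$ are considerably higher, so there are more and larger matrices $E_q$ whose eigenvalues and norms must be evaluated. No idea beyond Theorem~\ref{H-T-1} is required.
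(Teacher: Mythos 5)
Your proposal is correct and is exactly the paper's route: the paper gives no separate argument for this theorem, stating only that its proof follows that of Theorem \ref{H-T-1}, i.e.\ the same three stages you describe (factor the Laurent polynomials $\mu_{8}(c)=\left(\frac{1+c}{2}\right)^{9}\nu_{8}(c)$ and $U(c)=\left(\frac{1+c+c^{2}+c^{3}}{4}\right)^{9}V(c)$, build the matrices $E_q$ via (\ref{a}), sandwich $\xi$ by (\ref{2aa}) and take the midpoint in $r=p-\log_s(\xi)$). Your inference of $p=9$ for both schemes and the caveat about the sandwich only being sharp when the two bounds are close match the paper's computation and its (heuristic) midpoint choice.
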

\begin{thm}
	The H\"{o}lder's regularities of the $10$-point binary subdivision scheme (\ref{const76}) and the $14$-point relaxed quaternary subdivision scheme (\ref{const80})  are 3.768111637 and  4.571743466 respectively.
\end{thm}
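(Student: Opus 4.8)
The plan is to apply, verbatim, the eigenvalue / joint--spectral--radius procedure of Definition \ref{def-2.3} to each of the two schemes in turn, exactly as in the proof of Theorem \ref{H-T-1}. The only structural differences are the degree $s$ of the smoothing factor ($s=2$ for the binary scheme, $s=4$ for the quaternary one) and the fact that the matrices $E_q$ are now appreciably larger. First, for the $10$-point binary scheme (\ref{const76}) I would assemble its Laurent polynomial $\mu_{10}(c)=\mu_{4m+2}(c)$ at $m=2$ directly from the mask (\ref{const77}), and then divide out the maximal power of the binary smoothing factor, writing $\mu_{10}(c)=\left(\frac{1+c}{2}\right)^{p}\nu_{10}(c)$. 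Here $p$ is the number of times $(1+c)$ divides $\mu_{10}$, read off after the division, and $\nu_{10}$ is the residual Laurent polynomial.

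From the nonzero coefficients $e_0,\ldots,e_t$ of $\nu_{10}$ I would record $t$, form the $t\times t$ matrices $E_0,\ldots,E_t$ with entries $(E_q)_{ij}=e_{t+i-2j+q}$, and compute both the spectral radii $\rho(E_q)$ and the infinity norms $\|E_q\|_\infty$. The double inequality (\ref{2aa}) then traps the joint spectral radius $\xi$ between $\max_q\rho(E_q)$ and $\max_q\|E_q\|_\infty$; taking the representative value in that interval, as in Theorem \ref{H-T-1}, and evaluating $r=p-\log_2\xi$ should reproduce the stated value $3.768111637$. I would then repeat the entire computation for the $14$-point relaxed quaternary scheme (\ref{const80}), using its Laurent polynomial $U_{14}(c)=U_{6m+2}(c)$ at $m=2$ built from the mask (\ref{a77}). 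Since this is a quaternary scheme, the relevant smoothing factor is the quaternary one, so I would factor $U_{14}(c)=\left(\frac{1+c+c^{2}+c^{3}}{4}\right)^{p}V_{14}(c)$, extract the nonzero coefficients of $V_{14}$, and build the matrices $E_q$ with the dilation-$4$ rule $(E_q)_{ij}=e_{t+i-4j+q}$. Bounding $\xi=\rho(E_0,\ldots,E_t)$ between the maximal spectral radius and the maximal infinity norm and substituting into $r=p-\log_4\xi$ should give $4.571743466$.

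The main obstacle is the estimation of the joint spectral radius $\xi$ itself. Unlike the convex schemes of the earlier corollaries, the $10$-point mask (\ref{const77}) carries negative entries, so the infinity norm overestimates the true growth rate substantially while the lower bound $\max_q\rho(E_q)$ remains loose; the interval produced by (\ref{2aa}) is therefore much wider than in Theorem \ref{H-T-1}, and simply taking the midpoint is a cruder approximation. Closing that gap to the quoted precision genuinely requires examining longer products $E_{i(l)}\cdots E_{i(1)}$ (or invoking a balanced-norm / finiteness argument), and the matrices here are considerably larger than the $2\times2$ and $6\times6$ blocks of Theorem \ref{H-T-1}, which is what makes the arithmetic the laborious part. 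Determining the exact exponent $p$ for each scheme before the division is a smaller but necessary preliminary.

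Finally, I would use as a consistency check the product structure linking the two symbols: since the quaternary mask is the non-linear (autocorrelation-type) combination of the binary mask given by Theorem \ref{thm-odd-2}, one has schematically $\left(\frac{1+c}{2}\right)^{p}$ for the binary factor feeding into $\left(\frac{(1+c)(1+c^{2})}{4}\right)^{p}=\left(\frac{1+c+c^{2}+c^{3}}{4}\right)^{p}$ for the quaternary factor, so the two schemes share the same exponent $p$ and differ only through the residual polynomials $\nu_{10}$ and $V_{14}$ and through the two dilations. This is precisely why the quaternary regularity cannot be inferred from the binary one and must be recomputed with $s=4$; indeed, in contrast to the previous pairs, the quaternary value here \emph{exceeds} the binary one, a reversal which signals that the dilation-$4$ joint spectral radius is markedly smaller and which I would treat as a built-in sanity test on the matrix computations.
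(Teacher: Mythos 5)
Your proposal follows exactly the route the paper intends: for this theorem the paper supplies no separate argument but states that the proof ``follows the proof of Theorem \ref{H-T-1}'', i.e., write the Laurent polynomial from the mask, factor out the maximal power of the smoothing factor, form the matrices $E_{q}$ via (\ref{a}), trap $\xi$ between the maximal spectral radius and maximal infinity norm by (\ref{2aa}), pick the representative value in that interval, and set $r=p-\log_{s}\xi$ with $s=2$ for (\ref{const76}) and $s=4$ for (\ref{const80}) --- which is precisely your plan, including the correct observation that the negative entries of (\ref{const77}) make the interval wide and the midpoint crude. One caveat on your closing ``sanity test'': since the quaternary scheme is by construction two steps of the binary one, $U_{14}(c)=\mu_{10}(c)\,\mu_{10}(c^{2})$, the two schemes have identical limit curves and hence identical true H\"{o}lder regularity, so the reversal in the quoted values ($4.57$ versus $3.77$) is an artifact of the midpoint estimate on that wide interval rather than evidence of a genuinely smaller dilation-$4$ joint spectral radius, and it should be read as a warning about the precision of the method, not as a structural feature to be expected.
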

\begin{thm}
	The H\"{o}lder's regularity of the $10$-point binary subdivision scheme corresponding to the mask (\ref{const82})  is 10.67905327, although the H\"{o}lder's regularity of the $14$-point relaxed quaternary subdivision scheme (\ref{const83}) is 10.65483615.
\end{thm}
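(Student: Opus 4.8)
The plan is to run the machinery of Definition~\ref{def-2.3} twice, following the worked template of Theorem~\ref{H-T-1}: once for the binary mask (\ref{const82}) with arity $s=2$, and once for the quaternary mask (\ref{const83}) with arity $s=4$. In each case the regularity is read off from $r=p-\log_s(\xi)$ once the smoothing power $p$ and (an estimate of) the joint spectral radius $\xi$ are in hand.

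For the binary side I would first assemble the symmetric Laurent polynomial $\mu_{10}(c)$ whose coefficients are the ten numbers $\beta_{-8},\ldots,\beta_{10}$ of (\ref{const82}), arranged exactly as $\mu_4(c)$ was built from (\ref{a41}) in the proof of Theorem~\ref{H-T-1}. The next step is to extract the maximal power of the smoothing factor, i.e. to write $\mu_{10}(c)=\left(\frac{1+c}{2}\right)^{p}\nu_{10}(c)$, with $p$ determined as the multiplicity of the root $c=-1$ and $\nu_{10}(c)$ obtained by polynomial division. With the nonzero coefficients $e_0,\ldots,e_t$ of $\nu_{10}$ in hand, I would build the $t\times t$ matrices $E_0,\ldots,E_t$ via $(E_q)_{ij}=e_{t+i-sj+q}$ with $s=2$, compute each spectral radius $\rho(E_q)$ and infinity norm $\|E_q\|_\infty$, and invoke the sandwich (\ref{2aa}) to bracket $\xi$. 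As in Theorem~\ref{H-T-1}, I would take $\xi$ to be the midpoint of that bracketing interval and conclude $r=p-\log_2(\xi)=10.67905327$.

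The quaternary case is structurally identical but with $s=4$: I would form $U_{14}(c)$ from the masks $c_1,\ldots,c_{15}$ and $d_1,\ldots,d_{14}$ of (\ref{const83}), factor out $\left(\frac{1+c+c^2+c^3}{4}\right)^{p}$ to obtain $V_{14}(c)$, and then repeat the matrix construction, the spectral-radius/norm bracketing, and the midpoint selection to reach $r=p-\log_4(\xi)=10.65483615$. Comparing the two numbers gives the stated conclusion that the binary scheme is marginally more regular than its quaternary counterpart.

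The main obstacle is the joint spectral radius $\xi$ itself, which is not available in closed form; the whole argument therefore rests on the inequality (\ref{2aa}), and it succeeds only if the lower bound $\max_q\rho(E_q)$ and the upper bound $\max_q\|E_q\|_\infty$ are close enough that the midpoint estimate is meaningful, exactly as happened in Theorem~\ref{H-T-1} where the two bounds agreed to several decimals. The computational burden is heavier here than in the $4$-point example because the matrix size $t$ grows with the number of nonzero coefficients of $\nu_{10}$ and $V_{14}$, and the quaternary mask is of product type with many terms, so its $E_q$ are the largest; the eigenvalue and norm computations, though routine, are the laborious part. I would also check that $p$ is read off correctly and that the normalization $\sum_i e_i$ together with the symmetry of the masks is consistent with the convergence condition, since a miscount of $p$ would shift the final regularity by an integer.
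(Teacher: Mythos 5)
Your proposal is correct and is essentially the paper's own argument: the paper omits the proof of this theorem entirely, stating only that it ``follows the proof of Theorem~\ref{H-T-1},'' which is exactly the template you reproduce (factor out the smoothing factor to get $p$ and $\nu$, build the matrices $(E_q)_{ij}=e_{t+i-sj+q}$, bracket $\xi$ between $\max_q\rho(E_q)$ and $\max_q\|E_q\|_\infty$ via (\ref{2aa}), take the midpoint, and evaluate $r=p-\log_s(\xi)$ with $s=2$ and $s=4$ respectively). Your closing caveats about the midpoint heuristic and the correct reading of $p$ are sensible but match the paper's practice rather than depart from it.
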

\begin{thm}
	The H\"{o}lder's regularity of the $12$-point binary subdivision scheme (\ref{consta32})  is 12.72368201, whereas the H\"{o}lder's regularity of the $17$-point relaxed quaternary subdivision scheme (\ref{const36}) is 12.69332847.
\end{thm}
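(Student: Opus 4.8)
The plan is to apply verbatim the joint-spectral-radius procedure of Definition \ref{def-2.3}, exactly as executed in Theorem \ref{H-T-1}, to each of the two schemes in turn. I would treat the two regularity values as two independent computations sharing the same template: one with dilation $s=2$ for the binary scheme (\ref{consta32}), and one with $s=4$ for the quaternary scheme (\ref{const36}).

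For the binary side, first I would assemble the Laurent polynomial $\mu_{12}(c)=\sum_{j}\beta_{j}\,c^{\,j}$ from the twelve mask entries listed in (\ref{a45}), then extract the smoothing factor by writing $\mu_{12}(c)=\left(\frac{1+c}{2}\right)^{p}\nu_{12}(c)$ with $p$ taken as large as the factorization permits (the pattern of the earlier corollaries suggests $p=13$). Reading off the nonzero coefficients $e_{0},\ldots,e_{t}$ of $\nu_{12}(c)$, I would form the $t\times t$ matrices $E_{0},\ldots,E_{t}$ through $(E_{q})_{ij}=e_{t+i-2j+q}$ as in (\ref{a}), compute each spectral radius $\rho(E_{q})$ and each norm $\|E_{q}\|_{\infty}$, and sandwich the joint spectral radius $\xi$ between $\max_{q}\rho(E_{q})$ and $\max_{q}\|E_{q}\|_{\infty}$ via (\ref{2aa}). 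Estimating $\xi$ by the midpoint of that bracket and substituting into $r=p-\log_{s}(\xi)$ should return $12.72368201$.

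For the quaternary side the steps are identical in structure but with $s=4$: I would build $U_{17}(c)$ from the mask (\ref{a66-1})--(\ref{a66-2}), factor out $\left(\frac{1+c+c^{2}+c^{3}}{4}\right)^{p}$ to isolate $V_{17}(c)$, read off its coefficients, assemble the transition matrices via $(E_{q})_{ij}=e_{t+i-4j+q}$, and bound $\xi=\rho(E_{0},\ldots,E_{t})$ from below by the largest spectral radius and from above by the largest infinity norm. The final substitution into $r=p-\log_{4}(\xi)$ is expected to yield $12.69332847$, confirming the slight regularity loss relative to the binary parent that was observed in every preceding pair.

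The main obstacle is not conceptual but computational and quantitative. Because the masks here are far longer than in Theorem \ref{H-T-1}, the matrices $E_{q}$ are large (on the order of $t\approx 10$ for the binary case and substantially larger for the quaternary case, since the quaternary symbol spans roughly three times the binary one), so both the spectral radii and the infinity norms must be produced by symbolic or numeric software rather than by hand. The more delicate point is that (\ref{2aa}) only \emph{brackets} the joint spectral radius $\xi$; the stated eight-digit regularities are meaningful only if the lower bound $\max_{q}\rho(E_{q})$ and the upper bound $\max_{q}\|E_{q}\|_{\infty}$ are numerically close, so that the midpoint estimate of $\xi$ is tight. Verifying that this gap is indeed small enough to justify the quoted precision, rather than merely exhibiting the bracket, is where the real work lies.
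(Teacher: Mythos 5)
Your proposal follows exactly the route the paper intends: the paper gives no separate argument for this theorem, stating only that its proof follows that of Theorem \ref{H-T-1}, i.e.\ the same Laurent-polynomial factorization, transition matrices $(E_{q})_{ij}=e_{t+i-sj+q}$, the bracket (\ref{2aa}) on the joint spectral radius, a midpoint estimate for $\xi$, and $r=p-\log_{s}(\xi)$ with $s=2$ and $s=4$ respectively. Your parameter guesses ($p=13$, $t\approx 10$ binary, much larger $t$ quaternary) are consistent with the pattern of Theorem \ref{H-T-1}, and your caveat about the tightness of the spectral-radius/norm bracket is exactly the point the paper itself glosses over by choosing the mid value, so the proposal matches the paper's proof in substance.
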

\section{Degree of Precision}
Degree of precision of a subdivision scheme is the ability of a subdivision scheme to produce the same polynomial from which the initial data is taken. In other words, degree of precision of a subdivision scheme is $n$ if it produces polynomials of degree $0,1,\ldots,n$ when the initial data is taken from these polynomials respectively, but not produces the polynomial of degree $n+1$ when initial data is taken from that specific polynomial of degree $n+1$.
Whereas the degree of polynomial generation of a subdivision scheme is its ability to produces the polynomial of same degree from which the initial data is chosen.

In this section, we discuss the response of the pair of binary and quaternary schemes on polynomial data. We summerize these responses in Table  \ref{table-com}. In this table, BSS, QSS, DoP and DoG denote binary subdivision scheme, quaternary subdivision scheme, degree of precision and degree of polynomial generation respectively. The table indicates that there is no impact on these two characteristics of the pair of schemes when we move from binary to its corresponding quaternary subdivision scheme. 
\begin{table}
\caption{\label{table-com}\emph{Response of the pairs of binary and quaternary subdivision schemes to the polynomial data.}}
\begin{center}
\begin{tabular}{|c|c|c||c|c|c|}
\hline
\textbf{BSS}   & \textbf{DoP} &   \textbf{DoG} &\textbf{QSS}   & \textbf{DoP} &   \textbf{DoG}\\
\hline
2-points(\ref{consta40})  & 1         & 2  &3-points(\ref{consta41})  & 1         & 2 \\
4-points(\ref{constaa1})  & 1         & 4  & 6-points(\ref{constaa2})  & 1         & 4 \\
6-points(\ref{consta42})  & 1         & 6  & 9-points(\ref{consta43})  & 1         & 6 \\
8-points(\ref{constaa})  & 1         & 8   & 12-points(\ref{const32})  & 1         & 8 \\
10-points(\ref{const76})   & 9         & 10  & 15-points(\ref{const80})   & 9         & 10 \\
10-points(\ref{const82})   & 1         & 10  & 15-points(\ref{const83})   & 1         & 10 \\
12-points(\ref{consta32}) & 1         & 12  & 18-points(\ref{const36}) & 1         & 12 \\
\hline
\end{tabular}
\end{center}
\end{table}
\section{Conclusion}
In this research, we presented a new study about the binary and quaternary subdivision schemes. We proved that every even-point binary subdivision scheme can be used to get a relaxed quaternary subdivision scheme. We presented an intresting link between the masks of theses pairs of schemes, that the mask of the quaternary scheme is just the non-linear combination of the mask of the binary subdivision scheme. The results are applicable on all linear and stationary even-point binary subdivision schemes without any restriction.  Moreover, we validated our general results by different even-point binary subdivision schemes.The graphical inspections and theoratical analysis of the pairs of schemes are also presented. Which shows that the final models of both type of schemes are almost same but quaternary subdivision schemes give us final models in less number of iterations as compare to the parent binary subdivision schemes.


\section*{Conflicts of interest}
The authors declare that there are no conflicts of interest regarding the publication of this paper.

\end{document}